\documentclass[12pt]{article}
\usepackage{amsmath,amsfonts,amssymb,mathtools}
\usepackage{amsthm} 
\usepackage{amsmath}
\usepackage[hmargin=2.5cm,vmargin=3cm,centering]{geometry}
\usepackage{graphicx}
\usepackage{hyperref}
\hypersetup{
    colorlinks,
    citecolor=black,
    filecolor=magenta,
    linkcolor=black,
    urlcolor=black,
    }
\usepackage[titletoc]{appendix}
\usepackage{color}
 \definecolor{red}{rgb}{1,0,0}

\newcommand{\xddots}{%
  \raise 4pt \hbox {.}
  \mkern 6mu
  \raise 1pt \hbox {.}
  \mkern 6mu
  \raise -2pt \hbox {.}
}

 \theoremstyle{break}
\newtheorem{thm}{Theorem}[section] 
\newtheorem{RQ}[thm]{Remark}

\newtheorem{NTS}[thm]{Notations}
\newtheorem{prop}[thm] {Proposition}
\newtheorem{cor}[thm] {Corollary}
\newtheorem{defi}[thm]{Definition}
\newtheorem{defis}[thm]{Definitions}

\numberwithin{equation}{section}
\newtheorem{lem}[thm]{Lemma}
\newtheorem{NT}[thm]{Notation}
\newtheorem{assumption}{Assumption}

\usepackage{float}

\title{\large\bf{Global subelliptic estimates
for Kramers-Fokker-Planck
 operators with some class of polynomials}}

\author{
   Mona Ben Said\\
Laboratoire Analyse, G{\'e}om{\'e}trie et Applications\\
   Universit{\'e} Paris 13\\
   99 Avenue Jean Baptiste Cl{\'e}ment \\93430 Villetaneuse, France\\
  bensaid@univ-paris13.fr 
   }

\begin{document}
\maketitle

\begin{abstract}
In this article we study some Kramers-Fokker-Planck operators with a polynomial potential $V(q)$ of degree greater than two having quadratic limiting behavior. This work provides an accurate global subelliptic estimate for KFP operators under some conditions imposed on the potential.
\end{abstract}

\noindent\textbf{Key words:} subelliptic estimates, compact resolvent, Kramers-Fokker-Planck operator.\\
\noindent\textbf{MSC-2010:} 35Q84, 35H20, 35P05, 47A10, 14P10
\tableofcontents
\section{Introduction and main results}
The Kramers-Fokker-Planck operator reads
\begin{align}
K_V=p.\partial_q-\partial_qV(q).\partial_p+\frac{1}{2}(-\Delta_p+p^2)~,\;\;\;\;\;(q,p)\in
  \mathbb{R}^{2d}
\,,
\label{eq1}
\end{align}
where $q$ denotes the space variable, $p$ denotes the velocity
variable, $x.y=\sum\limits_{j=1}^{d}x_{j}y_{j}$\,,
  $x^{2}=\sum\limits_{j=1}^{d}x_{j}^{2}$ and the potential $V(q) = \sum\limits_{|\alpha|\le r}V_{\alpha}q^{\alpha}$
is a real-valued polynomial function on $\mathbb{R}^d$ with
$d^{\circ}V = r.$
  
There have been several works concerned with the operator $K_V$ with diversified approaches. In this article we impose some kind of assumptions on the polynomial potential $V (q),$ so that the Kramers-Fokker-Planck operator $K_V$ admits a global subelliptic estimate and has a compact resolvent. This problem is closely related to the return to the equilibrium for the Kramers-Fokker-Planck operator (see \cite{HeNi}\cite{Nie}\cite{Nou}).  
As mentioned in \cite{HerNi} and \cite{Nie}, the analysis of $K_{V}$ is also strongly linked to the one of the Witten Laplacian 
$\Delta_{V}^{(0)}=-\Delta_{q}+\left|\nabla V(q)\right|^{2}-\Delta V(q).$ This relation yielded to the following conjecture established by Helffer-Nier: 
\begin{align}
(1+K_V)^{-1}\;\text{compact} \Leftrightarrow (1+\Delta_{V}^{(0)})^{-1}\;\text{compact}~.\label{con}
\end{align}
This conjecture has been partially resolved in simple cases (see for example \cite{HeNi}, \cite{HerNi} and \cite{Li}), whereas for the operator $\Delta_{V}^{(0)}$ very general criteria
of compactness work for polynomial potiential $V(q)$ of arbitrary degree.
These last criteria require an analysis of the degeneracies at infinity of the potential and rely on extremely sophisticated tools
of hypoellipticity developed by Helffer and Nourrigat in the 1980's (see \cite{HeNo}, \cite{Nie}). Among the particularities of these last analysis, we mention that the  compactness results obtained for degenerate potentials at infiniy were not the same for $\Delta_{+V}^{(0)}$ as $\Delta_{-V}^{(0)}.$ The typical example which was considered is the case $V(q_1, q_2) = q_1^2q_2^2 $ in dimension $d = 2$: The operator $ \Delta_{- V}^{(0)} $ has a compact resolvent, while $\Delta_{+V}^{(0)}$ has not.

In the case of the Kramers-Fokker-Planck operator, there have been extensive works concerned with the case $d^{\circ}V\le2$ (see  \cite{Hor}\cite{HiPr}\cite{Vio}\cite{Vio1}\cite{AlVi}\cite{BNV}). Nevertheless,  as far as general potential is concerned, different kind of sufficient conditions on $V(q)$ had been examined by H{\'e}rau-Nier \cite{HerNi}, Helffer-Nier \cite{HeNi}, Villani \cite{Vil} and Wei-Xi Li \cite{Li}. These first results considered only variants of the elliptic situation at the infinity ( for non-degenerate potential), which did not distinguish the sign $\pm V(q).$  Lately a significant improvement of those works has been done by Wei-Xi Li \cite{Li2} based on some  multipliers methods. In \cite{Li2}, Wei-Xi Li showed that for potentials similar to $ V(q_1,q_2) = q_1^2q_2^2 $ the results for $ K_{\pm V}$ were the same as for $\Delta_{\pm V}^{(0)},$ thus comforting the idea that the conjecture (\ref{con}) is true. 

The ultimate goal would be to develop a complete recurrence with respect to $d^{\circ}V$ for the Kramers-Fokker-Planck operator like it is possible to do for the Witten Laplacian as recalled in \cite{HeNi} (cf. Teorem 10.16 page 106) and \cite{Nie} by following the general approach of Helffer-Nourrigat in \cite{HeNo} and \cite{Nou}. Although we are not able to write a complete induction, we establish here subelliptic estimates for $K_{V}$ for a rather general class of polynomial potentials with criteria which distinguish clearly the sign $\pm V(q)$. The asymtotic behaviour of those polynomials is governed by at most quadratic parameter dependent potentials, and the global subelliptic estimates in which arise some logarithmic weights are know to be essentially optimal in the quadratic case (see \cite{BNV}).

Denoting 
\[
	O_p=\frac{1}{2}(D^2_p+p^2)
\;,\]
and
\[
X_V=p.\partial_q-\partial_qV(q).\partial_p~,
\] 
we can rewrite the Kramers-Fokker-Planck operator $K_V$ defined in (\ref{eq1}) as
$K_V=X_V+O_p\;.$ \\
\textbf{Notations:} 
Throughout the paper we use the notation
\begin{align*}
\langle \cdot\rangle=\sqrt{1+|\cdot|^2}\;.
\end{align*}
For an arbitrary polynomial $V(q)$ of degree $r$, we denote for all $q\in\mathbb{R}^d$ 
\[
	\begin{aligned}
	\mathrm{Tr}_{+,V}(q) & = \sum\limits_{\substack{\nu\in \mathrm{Spec}(\mathrm{Hess}\; V)\\ \nu>0}} \nu(q)\,,
	\\ \mathrm{Tr}_{-,V}(q) &=-\sum\limits_{\substack{\nu\in \mathrm{Spec}(\mathrm{Hess}\; V)\\ \nu\le 0}}\nu(q)\;.
	\end{aligned}
\]
Futhermore, for a polynomial $P\in  E_r:=\left\{P\in \mathbb{R}[X_1,...,X_d],\;  d^\circ P\le r\right\}$ and all natural number $n\in\left\{1,...,r\right\}$, we define the functions $R^{^{\ge n}}_{P}:\mathbb{R}^d\to \mathbb{R}$ and $R^{= n}_{P}:\mathbb{R}^d\to \mathbb{R}$ by
 \begin{align}
R^{^{\ge n}}_P(q)=\sum\limits_{n\le \left|\alpha\right|\le r}\left|\partial^{\alpha}_qP(q)\right|^{\frac{1}{\left|\alpha\right|}}\;,\label{A.33}
 \end{align}
 \begin{align}
R^{= n}_P(q)=\sum\limits_{ |\alpha|=n}|\partial^{\alpha}_qP(q)|^{\frac{1}{|\alpha|}}\;.
 \end{align}
For arbitrary real functions $A$ and $B$, we make also use of the following notation \begin{align*}
 A \asymp B \iff \exists\, c\geq 1 : c^{-1}\left|B\right| \leq \left|A\right| \leq c\left|B\right|~.
 \end{align*}
This work is essentially based on the recent publication by Ben Said,
Nier, and Viola \cite{BNV}, which deals with the analysis of Kramers-Fokker-Planck operators with polynomials of degree less than 3. In this case we define the constants $A_V$ and $B_V$ by 
\begin{align*}
A_V &= \max \{(1+\mathrm{Tr}_{+,V})^{2/3}, 1+\mathrm{Tr}_{-,V}\}~,
\,	
\\	B_V &= \max\{\min\limits_{q\in\mathbb{R}^d}\left|\nabla\;V(q)\right|^{4/3}, \frac{1+\mathrm{Tr}_{-,V}}{\log(2+\mathrm{Tr}_{-,V})^2}\}\;.
\end{align*}
As proved in \cite{BNV}, there is a constant $c>0$ such that the following global subelliptic estimate with remainder
\begin{align}
\|K_Vu\|^2_{L^2(\mathbb{R}^{2d})}+A_V\|u\|^2_{L^2(\mathbb{R}^{2d})}\ge
{c} \Big(\|O_pu\|^2_{L^2(\mathbb{R}^{2d})}&+\|X_Vu\|^2_{L^2(\mathbb{R}^{2d})}\nonumber\\&+\|\langle\partial_q V(q)\rangle^{2/3}u\|^2_{L^2(\mathbb{R}^{2d})}+\|\langle D_q\rangle^{2/3}u\|_{L^2(\mathbb{R}^{2d})}\Big)\label{eq4}
\end{align}
holds for all $u\in \mathcal{C}_0^{\infty}(\mathbb{R}^{2d}).$ Moreover, if $V$ does not have any local minimum, that is if $\mathrm{Tr}_{-,V}+\min\limits_{q\in\mathbb{R}^d}\left|\nabla\;V(q)\right|\not=0$,  there exists a constant $c>0$ such that 
\begin{align}
\|K_Vu\|^2_{L^2(\mathbb{R}^{2d})}\ge c\,B_V\|u\|^2_{L^2(\mathbb{R}^{2d})}~,\label{1.5m}
\end{align}
holds for all $u\in \mathcal{C}_0^{\infty}(\mathbb{R}^{2d}).$ Hence combining (\ref{1.5m}) and (\ref{eq4}), there is a constant $c>0$ so that 
\begin{align}
\|K_Vu\|^2_{L^2(\mathbb{R}^{2d})}\ge \frac{c}{1+\frac{A_V}{B_V}}\Big(\|O_pu\|^2_{L^2(\mathbb{R}^{2d})}&+\|X_Vu\|^2_{L^2(\mathbb{R}^{2d})}\nonumber\\&+\|\langle\partial_q V(q)\rangle^{2/3}u\|^2_{L^2(\mathbb{R}^{2d})}+\|\langle D_q\rangle^{2/3}u\|_{L^2(\mathbb{R}^{2d})}\Big)
\label{eq5}
\end{align}
is valid for all $u\in \mathcal{C}_0^{\infty}(\mathbb{R}^{2d}).$
The constants appearing in (\ref{eq4}), (\ref{1.5m}) and (\ref{eq5}) are independent of the potential $V.$  We recall here that for a smooth potential $V\in\mathcal{C}^{\infty}(\mathbb{R}^d)$, our operator $K_V$ is essential maximal accretive when endowed with the domain $\mathcal{C}_0^{\infty}(\mathbb{R}^{2d})\;\cite{HeNi}$ (cf. Proposition 5.5 page 44). As a result the domain of its closure is given by
\begin{align*}
D(K_V)=\left\{u\in L^2(\mathbb{R}^{2d}),\; K_Vu\in L^2(\mathbb{R}^{2d})\right\}~.
\end{align*}
Consequently by density of $\mathcal{C}_0^{\infty}(\mathbb{R}^{2d})$ in $D(K_V)$ all estimates stated in this paper, which are checked with $C^\infty_0(\mathbb{R}^{2d})$ functions, can be extended to the domain of $K_V.$ 

Given a polynomial $V(q)$ with degree $r$ greater than two, our result
will require the following assumption after setting for $\kappa>0$
\begin{align*}
\Sigma(\kappa)=\left\{q\in\mathbb{R}^d,\;\left|\nabla V(q)\right|^{\frac{4}{3}}\ge \kappa\Big(\left|\mathrm{Hess}\; V(q)\right|+R_{V}^{^{\ge 3}}(q)^4+1\Big)\right\}~.
\end{align*}
\begin{assumption}
\label{assumption1}
There exist large constants $\kappa_0,C_1>1$
such that for all $\kappa\ge \kappa_0$ the polynomial  $V(q)$  satisfies the following properties

\begin{align}
\mathrm{Tr}_{-,V}(q)\ge \frac{1}{C_1}\mathrm{Tr}_{+,V}(q)\;,\;\;\text{for all}\;\; q\in\mathbb{R}^d\setminus \Sigma(\kappa) \;\text{with}\; \left|q\right|\ge C_1~,\label{1.4}
\end{align}
moreover if $\mathbb{R}^d\setminus \Sigma(\kappa)$ is not bounded
\begin{align}
\lim\limits_{\substack{q\to\infty \\ q\in \mathbb{R}^d\setminus \Sigma(\kappa)}}\frac{R_{V}^{^{\ge 3}}(q)^4}{\left|\mathrm{Hess}\;V(q)\right|}=0\;.\label{1.5}
\end{align}

\end{assumption}
Our main result is the following.
\begin{thm}\label{thm1.1}
Let $V(q)$ be a polynomial of degree $r$ greater than two verifying Assumption~\ref{assumption1}. Then
there exists a strictly positive constant $C_{V}>1$ (depending on $V$) such that
\begin{align}
\|K_{V}u\|^2_{L^2}+C_{V}\|u\|^2_{L^2}\ge \frac{1}{C_V}\Big(\|L(O_p)u\|^2_{L^2}&+\|L(\langle\nabla V(q)\rangle^{\frac{2}{3}})  u\|^2_{L^2}\nonumber\\&+\|L(\langle\mathrm{Hess}\; V(q)\rangle^{\frac{1}{2}} ) u\|^2_{L^2}+\|L(\langle D_q\rangle^{\frac{2}{3}} ) u\|^2_{L^2}\Big)~,\label{1.6}
\end{align}
holds for all $u\in D(K_{V})$ where $L(s)=\frac{s+1}{\log(s+1)}$ for any $s\ge1.$  
\end{thm}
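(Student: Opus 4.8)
The plan is to derive the estimate \eqref{1.6} by a localization-and-induction scheme that reduces the degree-$r$ potential to quadratic model problems, to which the Ben Said--Nier--Viola estimate \eqref{eq4} (and \eqref{1.5m}) applies directly. First I would fix the threshold $\kappa\ge\kappa_0$ from Assumption~\ref{assumption1} and split $\mathbb{R}^d$ into the ``elliptic'' region $\Sigma(\kappa)$, where $|\nabla V(q)|^{4/3}$ dominates $|\mathrm{Hess}\,V(q)|+R_V^{\ge3}(q)^4+1$, and its complement. On $\Sigma(\kappa)$ the gradient term is the leading player; on $\mathbb{R}^d\setminus\Sigma(\kappa)$, by \eqref{1.4} the negative trace controls the positive trace up to the constant $C_1$ and by \eqref{1.5} the genuinely cubic-and-higher part $R_V^{\ge3}(q)^4$ is negligible compared with $|\mathrm{Hess}\,V(q)|$ at infinity, so $V$ ``looks quadratic'' there. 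Concretely I would build a partition of unity $\sum_j\chi_j^2=1$ subordinate to a Calderón--Zygmund-type covering of $\mathbb{R}^d$ at scale governed by the natural length $\rho(q)\sim\big(1+|\mathrm{Hess}\,V(q)|+R_V^{\ge3}(q)^4+|\nabla V(q)|^{4/3}\big)^{-1/2}$ (the analogue of the ``effective Planck scale'' used in the quadratic analysis of \cite{BNV}), lift the $\chi_j$ to functions of $q$ only on $\mathbb{R}^{2d}$, and write $\|K_Vu\|^2 \gtrsim \sum_j\|K_V(\chi_j u)\|^2 - (\text{commutator errors})$.

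The second step is the local model reduction. On the support of each $\chi_j$, Taylor-expand $V$ to second order around the center $q_j$: $V(q)=V(q_j)+\nabla V(q_j)\cdot(q-q_j)+\tfrac12(q-q_j)^\top \mathrm{Hess}\,V(q_j)(q-q_j)+ \text{(higher order)}$. By the choice of scale and by \eqref{1.5} the higher-order remainder on $\mathrm{supp}\,\chi_j$ is controlled in the relevant norms by the quadratic data, so $K_V(\chi_j u)$ agrees, modulo lower-order terms absorbable in the left-hand side, with $K_{V_j}(\chi_j u)$ for the quadratic polynomial $V_j$. Then \eqref{eq4} applied to $V_j$ gives, with the $V$-independent constant $c$,
\begin{align*}
\|K_{V_j}(\chi_j u)\|^2+A_{V_j}\|\chi_j u\|^2 \ge c\Big(\|O_p\chi_j u\|^2+\|X_{V_j}\chi_j u\|^2+\|\langle\partial_q V_j\rangle^{2/3}\chi_j u\|^2+\|\langle D_q\rangle^{2/3}\chi_j u\|^2\Big),
\end{align*}
and here $A_{V_j}\sim(1+\mathrm{Tr}_{+,V}(q_j))^{2/3}+1+\mathrm{Tr}_{-,V}(q_j)$, which on $\mathrm{supp}\,\chi_j$ is comparable to $1+|\mathrm{Hess}\,V(q)|$; moreover $\langle\partial_q V_j\rangle^{2/3}$ on that patch is comparable to $\langle\nabla V(q)\rangle^{2/3}$ and, when $q_j\in\Sigma(\kappa)$, dominates $\langle\mathrm{Hess}\,V(q)\rangle^{1/2}$ by the definition of $\Sigma(\kappa)$. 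Summing over $j$ and using $\sum_j\chi_j^2=1$ reassembles all four terms on the right of \eqref{1.6} — but with the bad weight $A_V\sim\langle\mathrm{Hess}\,V\rangle$ in front of $\|u\|^2$ on the left, rather than a fixed constant $C_V$.

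The third and decisive step is to remove this weight at the cost of the logarithmic loss $L(s)=\tfrac{s+1}{\log(s+1)}$, which is exactly where \eqref{1.5m} enters. On the patches where $q_j\notin\Sigma(\kappa)$ and $|q_j|$ is large, Assumption~\ref{assumption1} guarantees $\mathrm{Tr}_{-,V}(q_j)\gtrsim \mathrm{Tr}_{+,V}(q_j)$ and $\mathrm{Tr}_{-,V}(q_j)+\min|\nabla V|\ne0$ locally, so \eqref{1.5m} yields $\|K_{V_j}(\chi_j u)\|^2\ge cB_{V_j}\|\chi_j u\|^2$ with $B_{V_j}\sim \frac{1+\mathrm{Tr}_{-,V}(q_j)}{\log(2+\mathrm{Tr}_{-,V}(q_j))^2}\sim \frac{A_{V_j}}{\log(A_{V_j})^2}$; rewriting via \eqref{eq5} in the form $\|K_{V_j}(\chi_j u)\|^2\gtrsim \frac{1}{1+A_{V_j}/B_{V_j}}(\dots)\gtrsim \frac{1}{\log(A_{V_j})^2}(\dots)$ produces precisely one $L(\cdot)$-type logarithmic weight attached to each of the four terms (since $A_{V_j}\sim\langle\mathrm{Hess}\,V\rangle\sim O_p$'s symbol-size bound on the patch, etc.), i.e. exactly the form $\|L(\langle\mathrm{Hess}\,V\rangle^{1/2})u\|^2$, $\|L(O_p)u\|^2$, and so on. On the finitely many remaining patches — those with $q_j\in\Sigma(\kappa)$, or $|q_j|\le C_1$ — the data $|\mathrm{Hess}\,V(q_j)|$ etc. are bounded by a constant depending only on $V$, so \eqref{eq4} alone, with $A_{V_j}\le C_V$, suffices and no logarithm is needed there; these patches contribute the $C_V\|u\|^2$ term on the left of \eqref{1.6}. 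Finally I would collect the commutator errors from Step~1 (each of the form $\|[\,O_p\text{ or }X_V,\chi_j\,]u\|$) and check they are bounded by $\varepsilon$ times the right-hand side of \eqref{1.6} plus $C_V\|u\|^2$, using that $|\nabla\chi_j|\lesssim\rho(q_j)^{-1}\lesssim\langle\mathrm{Hess}\,V\rangle^{1/2}+\dots$ and the finite-overlap property of the covering, and absorb them.

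The main obstacle is Step~3 together with the bookkeeping of how the ``bad'' multiplicative weight $A_V$ gets converted, patch by patch, into the logarithmic weights $L(\cdot)$ attached to the correct operators: one must verify that on each non-elliptic far patch the four symbol sizes $O_p$, $\langle\nabla V\rangle^{2/3}$, $\langle\mathrm{Hess}\,V\rangle^{1/2}$, $\langle D_q\rangle^{2/3}$ are all simultaneously controlled by a single scale comparable to $A_{V_j}$ (so that one logarithm in $A_{V_j}$ can be distributed to all of them), and that the limit \eqref{1.5} is quantitatively strong enough that the higher-order Taylor remainders never spoil this comparison uniformly in $j$ as $q_j\to\infty$. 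Keeping the constants independent of $j$ — and hence obtaining a single $C_V$ at the end — is the delicate part; everything else is partition-of-unity routine built on \eqref{eq4}, \eqref{1.5m}, \eqref{eq5}.
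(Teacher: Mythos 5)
Your overall architecture matches the paper's (localize in $q$, replace $V$ by its quadratic Taylor polynomial on each patch, apply the Ben Said--Nier--Viola estimates \eqref{eq4} and \eqref{1.5m} to the models, and extract the logarithm from the $B_{V_j}$ lower bound on the non-elliptic far patches), but the proposal has a fatal calibration error at the very first step. You choose a covering at scale $\rho(q)\sim\big(1+|\mathrm{Hess}\,V|+R_V^{\geq 3}(q)^4+|\nabla V|^{4/3}\big)^{-1/2}$, so $|\nabla\chi_j|\lesssim \rho(q_j)^{-1}$ and the IMS commutator error is $\sum_j\|(p\cdot\partial_q\chi_j)u\|^2\lesssim \sum_j\big(\langle\mathrm{Hess}\,V(q_j)\rangle+|\nabla V(q_j)|^{4/3}+\dots\big)\|p\,\chi_j u\|^2$. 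After the only available absorption ($M_j\langle u_j,O_pu_j\rangle\le\varepsilon\|O_pu_j\|^2+CM_j^2\|u_j\|^2$) this leaves a penalty of order $\langle\mathrm{Hess}\,V\rangle^{2}\|u_j\|^2$ (resp. $|\nabla V|^{8/3}\|u_j\|^2$), which is strictly larger than anything on the right-hand side of \eqref{1.6} — the best you have there is $\|L(\langle\mathrm{Hess}\,V\rangle^{1/2})u_j\|^2\sim \langle\mathrm{Hess}\,V\rangle\log^{-2}(\cdot)\|u_j\|^2$ — so the errors cannot be absorbed. The correct scale is the coarser $R_V^{\geq3}(q_j)^{-1}$ (Lemma~\ref{lemA.6} with $n=3$): it simultaneously makes the Taylor remainder of $\nabla V$ of size $O(R_V^{\geq3}(q_j'))$ and the commutator penalty of size $R_V^{\geq3}(q_j')^4\|u_j\|^2$, which is exactly what conditions \eqref{1.4}--\eqref{1.5} are designed to control.

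Two further gaps. First, in your Step~3 you place all patches with $q_j\in\Sigma(\kappa)$ among ``finitely many remaining patches'' with data bounded by a $V$-dependent constant; this is false ($\Sigma(\kappa)$ is typically unbounded and $|\mathrm{Hess}\,V(q_j)|\to\infty$ there, cf.\ Example~3). Those patches must instead be handled by absorbing $A_{V_j^2}\|u_j\|^2$ and the commutator penalty into a small fraction of $\|\langle\nabla V\rangle^{2/3}u_j\|^2$ via the defining inequality of $\Sigma(\kappa)$ after fixing $\kappa$ large — you state the needed domination in Step~2 but then do not use it. Second, on the non-elliptic far patches you need the commutator penalty $R_V^{\geq3}(q_j')^4\|u_j\|^2$ to be beaten by $B_{V_j^2}\|u_j\|^2\sim\langle\mathrm{Hess}\,V(q_j')\rangle\log^{-2}(\cdot)\|u_j\|^2$ with a constant that can be made large; the qualitative limit \eqref{1.5} alone does not give this. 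The paper upgrades it to the quantitative bound $\Lambda_\Sigma(\varrho)\,R_V^{\geq3}(q)^4\le |\mathrm{Hess}\,V(q)|^{1-\delta}$ with $\Lambda_\Sigma(\varrho)\to\infty$ via a Tarski--Seidenberg/semialgebraic argument (Lemma~\ref{lem2.1}); you flag this as ``the delicate part'' but supply no mechanism, and without it the constants cannot be closed. (A more minor omission: reassembling $\|L(\langle D_q\rangle^{2/3})u\|^2$ from the localized pieces requires the interpolation and operator-monotonicity lemmas, Lemmas~\ref{lem2.3M} and~\ref{lem2.2}, since $\langle D_q\rangle$ does not commute with the cutoffs.)
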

\begin{cor}
\label{cor}
If $V(q)$ is  polynomial of degree greater than two that satisfies
Assumption~\ref{assumption1}, then the Kramers-Fokker-Planck operator $K_V$ has a compact resolvent.
\end{cor}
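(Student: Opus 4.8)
The plan is to derive the corollary from the subelliptic estimate~(\ref{1.6}) of Theorem~\ref{thm1.1} along the classical route: reduce ``$K_V$ has compact resolvent'' to ``every graph-norm-bounded sequence is uniformly localized in phase space''. Since $K_V$ is maximal accretive with $D(K_V)=\{u\in L^2(\mathbb R^{2d}):K_Vu\in L^2(\mathbb R^{2d})\}$ (Proposition~5.5 in \cite{HeNi}), the point $-1$ lies in its resolvent set, and by the resolvent identity it suffices to prove that $(1+K_V)^{-1}$ is compact on $L^2(\mathbb R^{2d})$. Because $(1+K_V)^{-1}$ sends the $L^2$-unit ball into a subset of $D(K_V)$ on which $\|u\|_{L^2}$ and $\|K_Vu\|_{L^2}$ are bounded, this reduces to showing that the unit ball of $D(K_V)$ for the graph norm $\big(\|u\|_{L^2}^2+\|K_Vu\|_{L^2}^2\big)^{1/2}$ is relatively compact in $L^2(\mathbb R^{2d})$; so I would fix a sequence $(u_n)\subset D(K_V)$ with $\|u_n\|_{L^2}^2+\|K_Vu_n\|_{L^2}^2\le 1$ and extract an $L^2$-convergent subsequence.

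First I would plug $(u_n)$ into~(\ref{1.6}), which bounds $\|L(O_p)u_n\|_{L^2}$, $\|L(\langle\nabla V(q)\rangle^{2/3})u_n\|_{L^2}$, $\|L(\langle\mathrm{Hess}\,V(q)\rangle^{1/2})u_n\|_{L^2}$ and $\|L(\langle D_q\rangle^{2/3})u_n\|_{L^2}$ uniformly in $n$. Using the elementary inequality $L(s)^2\ge s+1$ for $s\ge 0$ (equivalently $t\ge\log^2 t$ for $t\ge 1$), together with $O_p=\tfrac12(D_p^2+p^2)$ and $\langle D_q\rangle^{2/3}\ge 1$, this immediately yields uniform bounds on $\|\langle p\rangle u_n\|_{L^2}$, $\|\langle D_p\rangle u_n\|_{L^2}$ and $\|\langle D_q\rangle^{1/3}u_n\|_{L^2}$. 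Consequently $(u_n)$ is uniformly small for $|p|>R$, while its Fourier transform $\widehat{u_n}$ (in all of $(q,p)\in\mathbb R^{2d}$, dual variable $(\eta,\xi)$) is uniformly small for $|\eta|>R$ and for $|\xi|>R$, with the smallness tending to $0$ as $R\to\infty$ (e.g.\ $\int_{|\xi|>R}|\widehat{u_n}|^2\le R^{-2}\|D_pu_n\|_{L^2}^2$ and $\int_{|\eta|>R}|\widehat{u_n}|^2\le R^{-2/3}\|\langle D_q\rangle^{1/3}u_n\|_{L^2}^2$).

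The one genuinely non-formal ingredient, and the step I expect to be the main obstacle, is uniform smallness of $(u_n)$ for $|q|$ large; for this I would prove, purely from Assumption~\ref{assumption1} together with $r=d^\circ V\ge 3$, that
\[
|\nabla V(q)|+|\mathrm{Hess}\,V(q)|\longrightarrow+\infty\qquad\text{as }|q|\to\infty .
\]
I would argue by contradiction: if there are $q^{(k)}\to\infty$ and $B<\infty$ with $|\nabla V(q^{(k)})|+|\mathrm{Hess}\,V(q^{(k)})|\le B$, then applying Assumption~\ref{assumption1} with $\kappa:=\max(\kappa_0,B^{4/3}+1)$ forces every $q^{(k)}$ to lie outside $\Sigma(\kappa)$, since on $\Sigma(\kappa)$ one has $|\nabla V|^{4/3}\ge\kappa>B^{4/3}$. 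Hence $\mathbb R^d\setminus\Sigma(\kappa)$ is unbounded, so~(\ref{1.5}) gives $R_V^{^{\ge 3}}(q^{(k)})^4\big/|\mathrm{Hess}\,V(q^{(k)})|\to 0$ and therefore $R_V^{^{\ge 3}}(q^{(k)})^4\le\varepsilon_k|\mathrm{Hess}\,V(q^{(k)})|\le\varepsilon_k B\to 0$. This contradicts the pointwise lower bound $R_V^{^{\ge 3}}(q)\ge|c_{\alpha_0}|^{1/r}>0$ valid for all $q$, where $\alpha_0$ is a multi-index with $|\alpha_0|=r\ge 3$ and $\partial^{\alpha_0}_qV\equiv c_{\alpha_0}\ne 0$ (such $\alpha_0$ exists since $d^\circ V=r$, and the constraint $r\ge 3$ is exactly what makes this term appear in the sum defining $R_V^{^{\ge 3}}$). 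Granting this, and again using $L(s)^2\ge s$, the function $q\mapsto L(\langle\nabla V(q)\rangle^{2/3})^2+L(\langle\mathrm{Hess}\,V(q)\rangle^{1/2})^2$ tends to $+\infty$ as $|q|\to\infty$, so that for $R$ large, $|q|>R$ implies this quantity is $\ge M(R)$ with $M(R)\to\infty$, and hence $\int_{|q|>R}|u_n|^2\le M(R)^{-1}\big(\|L(\langle\nabla V\rangle^{2/3})u_n\|_{L^2}^2+\|L(\langle\mathrm{Hess}\,V\rangle^{1/2})u_n\|_{L^2}^2\big)\to 0$ uniformly in $n$.

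Finally I would conclude by the Riesz--Kolmogorov compactness theorem in $L^2(\mathbb R^{2d})$: a bounded sequence that is uniformly small outside large balls both in the $(q,p)$-variables and, after Fourier transform, in the $(\eta,\xi)$-variables is relatively compact. Equivalently one can invoke that $\chi(q,p)\,\psi(D_q,D_p)$ is Hilbert--Schmidt for $\chi,\psi\in C_0^\infty(\mathbb R^{2d})$ and combine this with a weak-limit plus diagonal argument. This produces the desired convergent subsequence, shows the graph-norm unit ball of $D(K_V)$ is relatively compact in $L^2(\mathbb R^{2d})$, and therefore that $(1+K_V)^{-1}$ — and hence the whole resolvent of $K_V$ — is compact. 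Apart from the geometric claim of the third paragraph, every step is routine; that claim is where the full force of Assumption~\ref{assumption1} and of $d^\circ V>2$ enters.
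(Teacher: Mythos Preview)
Your proof is correct and follows essentially the same route as the paper: use the subelliptic estimate~(\ref{1.6}) to control weights in $p$, $D_p$, $D_q$ and a $q$-weight, and then show via Assumption~\ref{assumption1} that $|\nabla V(q)|+|\mathrm{Hess}\,V(q)|\to\infty$ (the paper argues this directly rather than by contradiction, but the mechanism is identical---on $\Sigma(\kappa)$ the gradient is large, and on the complement (\ref{1.5}) together with the uniform lower bound $R_V^{^{\ge3}}(q)\ge R_V^{=r}(0)>0$ forces $|\mathrm{Hess}\,V(q)|\to\infty$). The only difference is that the paper leaves the final compactness criterion implicit, whereas you spell out the Riesz--Kolmogorov argument.
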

\begin{proof}{Proof of Corollary~\ref{cor}}\\
Assume $0<\delta<1.$ Define the functions $f_\delta:\mathbb{R}^d\to\mathbb{R}$ by $$f_\delta(q)= |\nabla V(q)|^{\frac{4}{3}(1-\delta)}+|\mathrm{Hess}\,V(q)|^{1-\delta}~.$$
From (\ref{1.6}) in Theorem~\ref{thm1.1} there is a constant $C_V>1$ such that 
\begin{align*}
\|K_Vu\|^2_{L^2}+C_V\|u\|^2_{L^2}\ge\frac{1}{C_V}\Big(\langle u,f_\delta u\rangle+ \|L(O_p)u\|^2_{L^2}+\|L(\langle D_q\rangle^{\frac{2}{3}} ) u\|^2_{L^2}\Big)~,
\end{align*}
holds for all $u\in\mathcal{C}_0^{\infty}(\mathbb{R}^{2d})$ and all $\delta\in (0,1).$
In order to prove that the operator $K_V$ has a compact resolvent it is sufficient to show that $\lim\limits_{q\to +\infty}f_\delta(q)=+\infty.$

To do so, assume $A>0$ and denote $\kappa=A^{\frac{1}{1-\delta}}.$ If $q\in\Sigma(\kappa),$ one has $$|\nabla V(q)|^{\frac{4}{3}(1-\delta)}\ge \kappa^{1-\delta}=A~.$$ Else if $q\in \mathbb{R}^d\setminus\Sigma(\kappa)$ by (\ref{1.5}) in Assumption~\ref{assumption1}, $\lim\limits_{\substack{q\to\infty \\ q\in \mathbb{R}^d\setminus \Sigma(\kappa)}} |\mathrm{Hess}\,V(q)|=+\infty.$ Hence there exists a constant $\eta>0$ such that  $|\mathrm{Hess}\,V(q)|^{1-\delta}\ge A$  for all $q\in\mathbb{R}^d\setminus \Sigma(\kappa)$ with $|q|\ge\eta.$

\end{proof}
\begin{RQ}
The results of Theorem \ref{thm1.1} and Corollary \ref{cor} can be extended in the case when $V=V_1+V_2$ where $V_1$ is polynomial satisfying Assumption \ref{assumption1} and $V_2$ is a function in $\mathcal{S}(\mathbb{R}^d).$ 
\end{RQ}
\section{Preliminary results}
This work is essentially based on two main strategies. The first one consists in the use of a partition of unity which is the most important tool that allows one to pass from
local to global estimates. 

In this paper, given a polynomial $V(q)$  we make use of a locally finite partition of unity with respect to the position variable $q\in\mathbb{R}^d$
 \begin{align}
\sum\limits_{j\in\mathbb{N}}\chi_j^2(q)=\sum\limits_{j\in\mathbb{N}}\widetilde{\chi}^2_j\Big(R_{V}^{^{\ge3}}(q_j)^{-1}(q-q_j)\Big)=1~\label{2.333M}
\end{align}
where $$\mathrm{supp}\;\widetilde{\chi}_j\subset B(q_j,a)\;\text{and }\;\widetilde{\chi}_j\equiv 1\;\text{in }\;\; B(q_j,b)$$ for some $q_j\in\mathbb{R}^d$ with $0<b<a$ independent of $j\in \mathbb{N}.$
Such a partition is described more precisely in Lemma~\ref{lemA.6} after taking $n=3.$ In our study introducing this partition yields to errors to be well controlled. 

The second approach lies in the decomposition of the operator $K_{V}$ onto two parts so that the first one be a Kramers-Fokker-Planck operator with polynomial potential of degree less than three. On this way, based on \cite{BNV}, we derive the result of Theorem~\ref{thm1.1}.

In order to prove Theorem~\ref{thm1.1} we need the following basic lemmas.
\begin{lem}\label{lem2.11}
Assume $V\in E_r$ with degree $r\in\mathbb{N}$. Consider the Kramers-Fokker-Planck operator $K_{V}$ defined as in (\ref{eq1}). For a locally finite partition of unity namely $\sum\limits_{j\in\mathbb{N}}\chi^2_j(q)=1$ one has 
\begin{align}
\|K_{V}u\|^2_{L^2(\mathbb{R}^{2d})}=\sum\limits_{j\in\mathbb{N}}\|K_{V}(\chi_ju)\|^2_{L^2(\mathbb{R}^{2d})}-\|(p\partial_q\chi_j)u\|^2_{L^2(\mathbb{R}^{2d})}\label{2.1}~,
\end{align}
for all $u\in\mathcal{C}_0^{\infty}(\mathbb{R}^{2d}).$

In particular when the degree of $V$ is larger than two and the cutoff functions $\chi_j$ have the form (\ref{2.333M}), there exists a constant $c_d>0$ (depending on the dimension $d$) so that 
\begin{align}
\|K_{V}u\|^2_{L^2(\mathbb{R}^{2d})}\ge\sum\limits_{j\in\mathbb{N}}\|K_{V}(\chi_ju)\|^2_{L^2(\mathbb{R}^{2d})}-c_dR_{V}^{^{\ge3}}(q_j)^2\|p\chi_ju\|^2_{L^2(\mathbb{R}^{2d})}\label{2.1}~,
\end{align}
holds for all $u\in\mathcal{C}_0^{\infty}(\mathbb{R}^{2d}).$
\end{lem}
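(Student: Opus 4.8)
The plan is to prove the first display as an \emph{exact} IMS-type localization identity coming from the Leibniz rule, and then to read off the ``in particular'' inequality by inserting the explicit scale of the cutoffs. The starting observation is that each $\chi_j$ is a real-valued function of the position variable $q$ alone, hence commutes with $O_p$ and with the multiplication operator $\partial_qV(q)\cdot\partial_p$; the only non-commuting piece of $K_V$ is the transport term $p\cdot\partial_q$, and $[p\cdot\partial_q,\chi_j]$ is multiplication by the real function $(q,p)\mapsto p\cdot\nabla_q\chi_j(q)$. Therefore, for every $u\in\mathcal{C}_0^{\infty}(\mathbb{R}^{2d})$,
\[
K_V(\chi_j u)=\chi_j\,K_Vu+(p\cdot\partial_q\chi_j)\,u .
\]
Note that $K_Vu\in\mathcal{C}_0^{\infty}(\mathbb{R}^{2d})$ and the partition is locally finite, so all sums below have only finitely many non-zero terms and every interchange is trivially legitimate; no use of accretivity or of the skew-adjointness of $X_V$ is made.

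I would then square the $L^2$-norm of this relation and sum over $j$. For fixed $j$, since $\chi_j$ is a real multiplication operator,
\[
\|K_V(\chi_j u)\|^2=\|\chi_j K_V u\|^2+2\,\mathrm{Re}\,\langle K_V u,\ \chi_j(p\cdot\partial_q\chi_j)u\rangle+\|(p\cdot\partial_q\chi_j)u\|^2 .
\]
Three facts then finish the identity: first, $\sum_j\chi_j^2=1$ gives $\sum_j\|\chi_j K_V u\|^2=\langle K_V u,\big(\sum_j\chi_j^2\big)K_V u\rangle=\|K_V u\|^2$; second, the cross terms sum to zero because, as a multiplication operator, $\sum_j\chi_j(p\cdot\nabla_q\chi_j)=p\cdot\big(\tfrac12\nabla_q\sum_j\chi_j^2\big)=p\cdot\tfrac12\nabla_q(1)=0$; third, the last terms are kept untouched. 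This yields $\sum_j\|K_V(\chi_j u)\|^2=\|K_V u\|^2+\sum_j\|(p\cdot\partial_q\chi_j)u\|^2$, i.e. the first display; the cross-term cancellation is its only real content.

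For the second assertion I would invoke Lemma~\ref{lemA.6} with $n=3$ twice. From the scaling $\chi_j(q)=\widetilde{\chi}_j\big(R_V^{\ge3}(q_j)^{-1}(q-q_j)\big)$ the chain rule controls the derivative of a cutoff by its own scale, $|\nabla_q\chi_j(q)|\le C_d\,R_V^{\ge3}(q_j)$ for $q\in\mathrm{supp}\,\chi_j$ and $\nabla_q\chi_j\equiv0$ elsewhere, with $C_d$ depending only on $d$ and on the fixed radii $0<b<a$ (and $R_V^{\ge3}$ is bounded below by a positive constant since $d^\circ V\ge3$, so this scale is finite); combined with $|p\cdot\nabla_q\chi_j|\le|p|\,|\nabla_q\chi_j|$ this gives $\|(p\cdot\partial_q\chi_j)u\|^2\le C_d^2\,R_V^{\ge3}(q_j)^2\int_{\mathrm{supp}\,\chi_j}|p|^2|u|^2\,dq\,dp$. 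Lemma~\ref{lemA.6} also provides that each support meets at most $N_d$ of the others and that $R_V^{\ge3}$ varies slowly along such overlapping chains, so $\mathbf{1}_{\mathrm{supp}\,\chi_j}\le\sum_{k:\,\mathrm{supp}\,\chi_k\cap\mathrm{supp}\,\chi_j\neq\emptyset}\chi_k^2$ and $R_V^{\ge3}(q_j)\asymp R_V^{\ge3}(q_k)$ for those $k$. Summing over $j$, exchanging the order of summation, and absorbing $N_d$ and the slow-variation constant into a single $c_d$ yields $\sum_j\|(p\cdot\partial_q\chi_j)u\|^2\le c_d\sum_j R_V^{\ge3}(q_j)^2\|p\chi_j u\|^2$, and inserting this into the identity produces the stated lower bound. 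The only delicate point is the cancellation in the second step; the rest is the chain rule together with the combinatorial and slow-variation properties of the adapted partition, and I expect the sole care needed in the write-up to be notational, namely keeping the support of $\nabla_q\chi_j$ distinct from that of $\chi_j$ during the re-summation.
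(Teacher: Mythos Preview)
Your proof is correct and follows essentially the same approach as the paper: the identity is obtained from the same commutator observation $[K_V,\chi_j]=p\cdot\partial_q\chi_j$ together with the cancellation $\sum_j\chi_j\partial_q\chi_j=\tfrac12\partial_q\sum_j\chi_j^2=0$ (you expand via Leibniz while the paper computes $K_V^*\chi_j^2K_V-\chi_jK_V^*K_V\chi_j$, but these are the same computation), and the second part likewise relies on the finite overlap and the slow variation of $R_V^{\ge3}$ exactly as the paper does.
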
 
\begin{proof}
First let $V\in E_r$ with $r\in\mathbb{N}$ is the degree of $V.$ Assume that $u\in\mathcal{C}_0^{\infty}(\mathbb{R}^{2d}).$ On the one hand,
  \begin{align*} 
        \|K_{V}u\|^2_{L^{2}}=\sum\limits_{j\in\mathbb{N}}\langle K_{V}u,\chi_j^2K_{V}u\rangle=\sum\limits_{j\in\mathbb{N}}\langle u,K_{V}^*\chi_j^2K_{V}u\rangle\;.
  \end{align*}
 On the other hand,
  \begin{align*}
 \sum\limits_{j\in\mathbb{N}}\|K_{V}(\chi_j u)\|^2_{L^{2}}= \sum\limits_{j\in\mathbb{N}}\langle u,\chi_jK_{V}^*K_{V}\chi_ju\rangle\;.
  \end{align*}
Putting the above equalities together\begin{align*}
  \|K_{V}u\|^2_{L^{2}}-\sum\limits_{j\in\mathbb{N}}\|K_{V}(\chi_j u)\|^2_{L^{2}}=\sum\limits_{j\in\mathbb{N}}\langle u,(K_{V}^*\chi_j^2K_{V}-\chi_jK_{V}^*K_{V}\chi_j)u\rangle~.
  \end{align*}
 Using commutators, we compute \begin{align*}
  K_{V}^*\chi_j^2K_{V}&= K_{V}^*\chi_j[\,\chi_j,K_{V}\,]+K_{V}^*\chi_jK_{V}\chi_j\\&=K_{V}^*\chi_j[\,\chi_j,K_{V}\,]+[\, K_{V}^*,\chi_j\,]K_{V}\chi_j+\chi_jK_{V}^*K_{V}\chi_j\\&=K_{V}^*\chi_j[\,\chi_j,K_{V}\,]+[\, K_{V}^*,\chi_j\,]\Big([\, K_{V},\chi_j\,]+\chi_jK_{V}\Big)+\chi_jK_{V}^*K_{V}\chi_j~.
  \end{align*}
Thus
  \begin{align*}
  K_{V}^*\chi_j^2K_{V}-\chi_jK_{V}^*K_{V}\chi_j=K_{V}^*\chi_j[\,\chi_j,K_{V}\,]+[\, K_{V}^*,\chi_j\,]\chi_jK_{V}+[\, K_{V}^*,\chi_j\,]\circ[\, K_{V},\chi_j\,]~.
  \end{align*}
Now it is easy to check the following commutation relations  \begin{align*}
  \left\{
    \begin{array}{ll}
      \;[\,\chi_j,K_{V}\,]=-[\, K_{V},\chi_j\,]=-[\, p\partial_q,\chi_j(q)\,]=-p\partial_q\chi_j \\ \;
       [\, K_{V}^*,\chi_j\,]=[\,-p\partial_q,\chi_j(q)\,]=-p\partial_q\chi_j\\ 
       \; [\, K_{V}^*,\chi_j\,]\circ[\, K_{V},\chi_j\,]=-(p\partial_q\chi_j)^2~.      
    \end{array}
\right.
  \end{align*}
Collecting the terms, we
obtain \begin{align*}
 \sum\limits_{j\in\mathbb{N}}(K_{V}^*\chi_j^2K_{V}-\chi_jK_{V}^*K_{V}\chi_j)&=\sum\limits_{j\in\mathbb{N}}K_{V}^*\chi_j(-p\partial_q\chi_j)+(-p\partial_q\chi_j)\chi_jK_{V}-(p\partial_q\chi_j)^2\\&=\sum\limits_{j\in\mathbb{N}}K_{V}^*\Big(\partial_q(\frac{\chi_j^2}{2})\Big)-p\partial_q(\frac{\chi_j^2}{2})K_{V}-(p\partial_q\chi_j)^2&\\=-(p\partial_q\chi_j)^2~,
  \end{align*}
  where in the last line we make use simply $\sum\limits_{j\in\mathbb{N}}\chi^2_j(q)=1.$

From this follows immediately the identity
  \begin{align*}
  \|K_{V}u\|^2_{L^{2}}=\sum_{j\in\mathbb{N}}\Big(\|K_{V}(\chi_j u)\|^2_{L^{2}}-\|(p\partial_q\chi_j)u\|^2_{L^{2}}\Big)
  \end{align*}
 for all $u\in\mathcal{C}_0^{\infty}(\mathbb{R}^{2d}).$
 
Next, suppose that the degree of $V$ is greater than two and $\chi_j(q)=\widetilde{\chi}_j\Big(R_{{V}}^{^{\ge3}}(q_j)^{-1}(q-q_j)\Big)$  for all index $j$ and any $q\in\mathbb{R}^d$ with $$\mathrm{supp}\;\widetilde{\chi}_j\subset B(q_j,a)\;\text{and }\;\widetilde{\chi}_j\equiv 1\;\text{in }\; B(q_j,b)~.$$ Then we can write 
\begin{align*}
\sum\limits_{j\in\mathbb{N}}\|(p\partial_q\chi_j)u\|^2&=\sum\limits_{j\in\mathbb{N}}\sum\limits_{j'\in\mathbb{N}}\|(p\partial_q\chi_j)\chi_{j'}u\|^2\nonumber\\&\le c_d\sum\limits_{j\in\mathbb{N}}R_{V}^{^{\ge 3}}(q_j)^2\|p\chi_ju\|^2~,
\end{align*}
where $c_d$ is a constant that depends only on the dimension  $d.$ Here the last inequality is due to the fact that for each index $j$ there are finitely many $j'$ such that $(\partial_q\chi_j)\chi_{j'}$ is nonzero.
  \end{proof}
 Before stating the following lemma, we fix and remind some notations.
  \begin{NTS}
Let $V$ be a polynomial of degree r larger than two. Consider a locally finite partition of unity $\sum\limits_{j\in\mathbb{N}}\chi_j^2(q)=1$ described as in (\ref{2.333M}). 

Set for all $\kappa>0$
\begin{align*}
J(\kappa)=\Big\lbrace j\in \mathbb{N},\; \text{such that}\;\; \mathrm{supp}\;\chi_j\subset \Sigma(\kappa)\Big\rbrace~,
\end{align*}
where we recall that
\begin{align*}
\Sigma(\kappa)=\left\{q\in\mathbb{R}^d,\;\left|\nabla V(q)\right|^{\frac{4}{3}}\ge \kappa\Big(\left|\mathrm{Hess}\; V(q)\right|+R_{V}^{^{\ge 3}}(q)^4+1\Big)\right\}~.
\end{align*}
For a given $\kappa>0$ and all index $j\in \mathbb{N},$ let $V_{j}^2$ be the polynomial of degree less than three given by 
\begin{align}
V_{j}^2(q)=\sum_{0\le \left|\alpha\right|\le 2}\frac{\partial^{\alpha}_qV(q'_j)}{\alpha!}(q-q'_j)^{\alpha}~,
\end{align}
 where \begin{align*}
 \left\{
    \begin{array}{ll}
        q'_j=q_j & \mbox{if}\; j\in J(\kappa)\\
        q'_j\in  (\mathrm{supp}\;\chi_j)\cap\Big(\mathbb{R}^d\setminus \Sigma(\kappa)\Big) & \mbox{else.}
    \end{array}
\right.
\end{align*}
\end{NTS}
\begin{lem} \label{lem2.3}
Assume $V$ a polynomial of degree $r$ larger than two. Consider a locally finite partion of unity described as in (\ref{2.333M}). For a multi-index $\alpha\in \mathbb{N}^d
$ of length $\left|\alpha\right|\in \left\{1,2\right\}$ and all $j\in\mathbb{N},$ one has 
\begin{align}
\left|\partial_q^{\alpha}V(q)-\partial_q^{\alpha}V_{j}^2(q)\right| \le c_{\alpha,d,r} \Big(R_{V}^{^{\ge 3}}(q'_j)\Big)^{\left|\alpha\right|}\label{2.40}
\end{align}
for any $q\in \mathrm{supp}\;\chi_j=B(q_j,aR_{V}^{^{\ge3}}(q_j)^{-1}),$ where $c_{\alpha,d,r}=\sum\limits_{\substack{3\le\left|\beta\right|\le r }}\beta!\;a^{-\left|\beta\right|+\left|\alpha\right|}.$
  
As a consequence, if $V$ satisfies Assumption~\ref{assumption1}, 
 there exists a large constant $\kappa_1\ge\kappa_0$ so that for all $\kappa\ge\kappa_1$
 
$ \bullet\;\text{if}\;\; j\in J(\kappa)$
\begin{align}
2^{-1}\left|\partial_qV_{j}^2(q)\right|
\leq 
\left|\partial_qV(q)\right|\leq 2\left|\partial_qV_{j}^2(q)\right|
\label{B.20}
\quad \quad \text{for every}\;\;q\in \mathrm{supp}\;\chi_j\;,
\end{align}

$ \bullet\;\text{if}\;\; j\notin J(\kappa)$
\begin{align}
2^{-1}\left|\mathrm{Hess}\; V_{j}^{2}(q)\right|
\leq
\left|\mathrm{Hess}\; V(q)\right|\leq 2 \left|\mathrm{Hess}\; V_{j}^2(q)\right|\;,\label{B.21}
\end{align}
for any $q\in\mathrm{supp}\;\chi_j$ with $\left|q\right|\ge
C_2(\kappa)$ where $C_2(\kappa)>0$ is a large constant that depends on
$\kappa$. 
\end{lem}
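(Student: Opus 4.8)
The plan is to establish first the Taylor‑type bound $(\ref{2.40})$ by a direct computation, and then to derive $(\ref{B.20})$ and $(\ref{B.21})$ from it via an absorption argument exploiting Assumption~\ref{assumption1}. For $(\ref{2.40})$, I would observe that since $V$ is a polynomial of degree $r$ and $V_{j}^{2}$ is its order‑$2$ Taylor polynomial at $q'_j$, for $|\alpha|\in\{1,2\}$ the difference $\partial_q^\alpha V-\partial_q^\alpha V_{j}^{2}$ is an \emph{exact, finite} Taylor remainder,
\[
\partial_q^\alpha V(q)-\partial_q^\alpha V_{j}^{2}(q)=\sum_{\substack{3\le|\gamma|\le r\\ \gamma\ge\alpha}}\frac{\partial_q^\gamma V(q'_j)}{(\gamma-\alpha)!}\,(q-q'_j)^{\gamma-\alpha}~,
\]
with $\gamma\ge\alpha$ understood componentwise. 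Every $\gamma$ here has $|\gamma|\ge3$, so $(\ref{A.33})$ gives $|\partial_q^\gamma V(q'_j)|\le R_{V}^{^{\ge 3}}(q'_j)^{|\gamma|}$; moreover for $q\in\mathrm{supp}\,\chi_j=B(q_j,aR_{V}^{^{\ge 3}}(q_j)^{-1})$ one has $|q-q'_j|\le 2aR_{V}^{^{\ge 3}}(q_j)^{-1}$, and since $q,q'_j$ both lie in $\mathrm{supp}\,\chi_j$, the construction of the partition of unity (Lemma~\ref{lemA.6}, taken with $n=3$) ensures $R_{V}^{^{\ge 3}}(q_j)\asymp R_{V}^{^{\ge 3}}(q'_j)$, hence $|q-q'_j|\le C\,R_{V}^{^{\ge 3}}(q'_j)^{-1}$ for some $C=C(a,d,r)$. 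Substituting these bounds and $|(q-q'_j)^{\gamma-\alpha}|\le|q-q'_j|^{|\gamma|-|\alpha|}$, all powers of $R_{V}^{^{\ge 3}}(q'_j)$ collapse to $R_{V}^{^{\ge 3}}(q'_j)^{|\alpha|}$, which yields $(\ref{2.40})$.

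For $(\ref{B.20})$ — the case $j\in J(\kappa)$, where $q'_j=q_j$ — I would sum $(\ref{2.40})$ over the components $|\alpha|=1$ to get $|\partial_qV(q)-\partial_qV_{j}^{2}(q)|\le c\,R_{V}^{^{\ge 3}}(q_j)$ on $\mathrm{supp}\,\chi_j$. Since $\mathrm{supp}\,\chi_j\subset\Sigma(\kappa)$, every such $q$ satisfies $|\partial_qV(q)|^{4/3}\ge\kappa\big(R_{V}^{^{\ge 3}}(q)^4+1\big)$; in particular $|\partial_qV(q)|\ge\kappa^{3/4}$, and using this, $R_{V}^{^{\ge 3}}(q)\le\kappa^{-1/4}|\partial_qV(q)|^{1/3}\le\kappa^{-3/4}|\partial_qV(q)|$. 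Together with $R_{V}^{^{\ge 3}}(q_j)\asymp R_{V}^{^{\ge 3}}(q)$ this gives $|\partial_qV(q)-\partial_qV_{j}^{2}(q)|\le C'\kappa^{-3/4}|\partial_qV(q)|$, and choosing $\kappa_1\ge\kappa_0$ large enough that $C'\kappa^{-3/4}\le\frac12$ for all $\kappa\ge\kappa_1$ forces $\frac12|\partial_qV(q)|\le|\partial_qV_{j}^{2}(q)|\le\frac32|\partial_qV(q)|$, i.e.\ $(\ref{B.20})$.

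For $(\ref{B.21})$ — the case $j\notin J(\kappa)$ — I would first note that, because $\deg V=r\ge3$, some $\partial_q^{\alpha_0}V$ with $|\alpha_0|=r$ is a nonzero constant, so $R_{V}^{^{\ge 3}}\ge c_0>0$ everywhere; hence $\mathrm{supp}\,\chi_j$ has diameter at most $2a/c_0$ and $|q'_j|\ge|q|-2a/c_0$. Summing $(\ref{2.40})$ with $|\alpha|=2$ over the entries of the Hessian, and using that $\mathrm{Hess}\,V_{j}^{2}\equiv\mathrm{Hess}\,V(q'_j)$ is constant, gives $|\mathrm{Hess}\,V(q)-\mathrm{Hess}\,V(q'_j)|\le c\,R_{V}^{^{\ge 3}}(q'_j)^2$. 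If $\mathbb{R}^d\setminus\Sigma(\kappa)$ is bounded, then $\mathrm{supp}\,\chi_j$ — which meets $\mathbb{R}^d\setminus\Sigma(\kappa)$ since $j\notin J(\kappa)$ — is contained in a fixed bounded set, and $(\ref{B.21})$ holds vacuously once $C_2(\kappa)$ is large. Otherwise $q'_j\in\mathbb{R}^d\setminus\Sigma(\kappa)$, and $(\ref{1.5})$ gives $R_{V}^{^{\ge 3}}(q'_j)^4/|\mathrm{Hess}\,V(q'_j)|\to0$ as $|q'_j|\to\infty$ inside $\mathbb{R}^d\setminus\Sigma(\kappa)$; since $R_{V}^{^{\ge 3}}\ge c_0$, this additionally forces $|\mathrm{Hess}\,V(q'_j)|\to\infty$, so that $R_{V}^{^{\ge 3}}(q'_j)^2=\big(R_{V}^{^{\ge 3}}(q'_j)^4/|\mathrm{Hess}\,V(q'_j)|\big)^{1/2}|\mathrm{Hess}\,V(q'_j)|^{1/2}=o\big(|\mathrm{Hess}\,V(q'_j)|\big)$. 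Hence there is a $\kappa$‑dependent $C_2(\kappa)$ such that for all $q\in\mathrm{supp}\,\chi_j$ with $|q|\ge C_2(\kappa)$ one has $|\mathrm{Hess}\,V(q)-\mathrm{Hess}\,V(q'_j)|\le\frac12|\mathrm{Hess}\,V(q'_j)|$, which is $(\ref{B.21})$.

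The step leading to $(\ref{2.40})$ is routine bookkeeping; the real content, and the main obstacle, is the two absorption arguments. For $j\in J(\kappa)$ one must use the lower bound $|\partial_qV|\ge\kappa^{3/4}$ hidden in the constant $1$ appearing in the definition of $\Sigma(\kappa)$ in order to dominate the Taylor error $R_{V}^{^{\ge 3}}(q_j)$ by $|\partial_qV|$. For $j\notin J(\kappa)$ the subtle point is that $(\ref{1.5})$ only provides $R_{V}^{^{\ge 3}}(q'_j)^4=o(|\mathrm{Hess}\,V(q'_j)|)$, which upgrades to the needed $R_{V}^{^{\ge 3}}(q'_j)^2=o(|\mathrm{Hess}\,V(q'_j)|)$ only because $R_{V}^{^{\ge 3}}$ is bounded below, which in turn forces $|\mathrm{Hess}\,V|$ to blow up along $\mathbb{R}^d\setminus\Sigma(\kappa)$; one must also fix $\kappa_1$ first and only then choose $C_2(\kappa)$ as a function of $\kappa$, and separately dispose of the degenerate case in which $\mathbb{R}^d\setminus\Sigma(\kappa)$ is bounded.
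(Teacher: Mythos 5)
Your proposal is correct, and for \eqref{2.40} and \eqref{B.20} it is essentially identical to the paper's argument: the same exact Taylor-remainder expansion, the same use of the slowness equivalence $R_{V}^{^{\ge 3}}(q_j)\asymp R_{V}^{^{\ge 3}}(q'_j)$ on $\mathrm{supp}\,\chi_j$ from Lemma~\ref{A.5}, and the same absorption via $|\nabla V(q)|\ge \kappa^{3/4}\ge 1$ hidden in the ``$+1$'' of $\Sigma(\kappa)$ (you get $\kappa^{-3/4}$ where the paper settles for $\kappa^{-1/4}$; both suffice to fix $\kappa_1$). The one genuine divergence is in the Hessian case \eqref{B.21}: the paper converts $R_{V}^{^{\ge 3}}(q)^{2}\le R_{V}^{^{\ge 3}}(q)^{4}/R_{V}^{=r}(0)^{2}$ and then invokes the semialgebraic Lemma~\ref{lem2.1} (the Tarski--Seidenberg machinery of Appendix~B) to dominate $R_{V}^{^{\ge 3}}(q)^{4}$ by $|\mathrm{Hess}\,V(q)|$ for $|q|\ge C_{2}(\kappa)$, whereas you extract the same domination directly and qualitatively from the limit \eqref{1.5} together with the uniform lower bound $R_{V}^{^{\ge 3}}\ge c_{0}>0$ (which forces $|\mathrm{Hess}\,V|\to\infty$ along $\mathbb{R}^d\setminus\Sigma(\kappa)$ and upgrades $R^{4}=o(|\mathrm{Hess}\,V|)$ to $R^{2}=o(|\mathrm{Hess}\,V|)$). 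Your route is more elementary and entirely adequate for Lemma~\ref{lem2.3}, since only the existence of $C_{2}(\kappa)$ is needed here; the paper's detour through Lemma~\ref{lem2.1} buys the quantitative rate $\Lambda_{\Sigma(\kappa)}(\varrho)\to+\infty$, which is not used in this lemma but is essential later in Step~1, Case~2 of the proof of Theorem~\ref{thm1.1} (estimate \eqref{3.110}), where $C(\kappa)$ must be tuned so that $\Lambda_{\Sigma(\kappa)}(C(\kappa))\ge\kappa$. You also explicitly dispose of the case where $\mathbb{R}^d\setminus\Sigma(\kappa)$ is bounded, a point the paper leaves implicit; your only cosmetic deviation is that the explicit constant $c_{\alpha,d,r}$ you obtain in \eqref{2.40} carries the extra equivalence factor from \eqref{2.9m}, which is harmless.
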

\begin{proof}
Let $V$ be a polynomial of degree $r$ greater than two. In this proof we are going to need the following equivalence 
\begin{align}R_{V}^{^{\ge 3}}(q)\asymp R_{V}^{^{\ge 3}}(q')\;,
\end{align}
satisfied for all $q,q'\in\mathrm{supp}\;\chi_j$ and  proved in Lemma~\ref{A.5}. That is there is a constant $C>1$ such that for every $q,q'\in\mathrm{supp}\;\chi_j,$
\begin{align}
\Big(\frac{R_{V}^{^{\ge 3}}(q)}{R_{V}^{^{\ge 3}}(q')}\Big)^{\pm1}\le C~.\label{2.9m}
\end{align}
Assume $\alpha\in \mathbb{N}^d
$ of length $\left|\alpha\right|\in \left\{1,2\right\}.$ For every $j\in\mathbb{N}$, observe that 
\begin{align*}
\left|\partial_q^{\alpha}V(q)-\partial_q^{\alpha}V_{j}^2(q)\right|&=|\sum\limits_{\substack{3\le\left|\beta\right|\le r \\ \beta\ge \alpha}}\frac{\beta!}{(\beta-\alpha)!}\partial_q^{\beta}V(q'_j)(q-q'_j)^{\beta-\alpha}|\nonumber\\&\le \sum\limits_{\substack{3\le\left|\beta\right|\le r \\ \beta\ge \alpha}}\frac{\beta!}{(\beta-\alpha)!}\left|\partial_q^{\beta}V(q'_j)\right|\;\left|q-q'_j\right|^{\left|\beta\right|-\left|\alpha\right|}~,
\end{align*}
for any $q\in\mathbb{R}^d.$ Hence regarding the equivalence (\ref{2.9m}), there exists a constant $c_{\alpha,d,r}>0$ (depending as well on the multi-index $\alpha$, the dimesion $d$ and the degree r of $V$) so that 
 \begin{align}
\left|\partial_q^{\alpha}V(q)-\partial_q^{\alpha}V_{j}^2(q)\right|&\le \sum\limits_{\substack{3\le\left|\beta\right|\le r \\ \beta\ge \alpha}}\frac{\beta!}{(\beta-\alpha)!}\Big(R_{V}^{^{\ge 3}}(q'_j)\Big)^{\left|\beta\right|}\Big(aR_{V}^{^{\ge 3}}(q_j)\Big)^{-\left|\beta\right|+\left|\alpha\right|}\nonumber\\&\le\sum\limits_{\substack{3\le\left|\beta\right|\le r \\ \beta\ge \alpha}}\frac{\beta!}{(\beta-\alpha)!}a^{-\left|\beta\right|+\left|\alpha\right|}\Big(R_{V}^{^{\ge 3}}(q'_j)\Big)^{\left|\alpha\right|}\nonumber\\&\le c_{\alpha,d,r} \Big(R_{V}^{^{\ge 3}}(q'_j)\Big)^{\left|\alpha\right|}~,\label{2.77}
\end{align}
holds for all $q$ in the support of $\chi_j,$ where $c_{\alpha,d,r}=\sum\limits_{\substack{3\le\left|\beta\right|\le r }}\beta!\;a^{-\left|\beta\right|+\left|\alpha\right|}.$

In the rest of the proof, let the polynomial $V(q)$ satisfies Assumption~\ref{assumption1}. In vue of (\ref{2.77}), we get when $\left|\alpha\right|=1$ 
\begin{align}
\left|\nabla V(q)-\nabla V_{j}^2(q)\right|&\le c_{1,d,r}\;R_{V}^{^{\ge 3}}(q'_j)~,\label{A.23MM}
\end{align}
for all $j\in \mathbb{N}$ and any $q\in \mathrm{supp}\;\chi_j,$  where $c_{1,d,r}=\sum\limits_{\substack{3\le\left|\beta\right|\le r }}\beta!\;a^{-\left|\beta\right|+1}.$
 Given $\kappa\ge\kappa_0,$
assume first that $ j\in J(\kappa).$
 By virtue of the equivalence (\ref{2.9m}), it results from (\ref{A.23MM})
\begin{align}
\left|\nabla V(q)-\nabla V_{j}^2(q)\right|\le c_{1,d,r}C\;R_{V}^{^{\ge 3}}(q)\label{A.23}\;,
\end{align}
for every $q\in\mathrm{supp}\;\chi_j.$
Then we obtain \begin{align}
\left|\nabla V(q)-\nabla V_{j}^2(q)\right|&\le\frac{c_{1,d,r}C}{\kappa^{\frac{1}{4}}}\left|\nabla V(q)\right|^{\frac{1}{3}}\nonumber\\&\le \frac{c_{1,d,r}C}{\kappa^{\frac{1}{4}}}\left|\nabla V(q)\right|\label{A.24}
\end{align}
for all $q\in\mathrm{supp}\;\chi_j.$ For the above second inequality we know that $|\nabla V(q)|\ge 1$ for every $q\in\mathrm{supp}\;\chi_j$, indeed since $j\in J(\kappa),$  \begin{align*}|\nabla V(q)|\ge \kappa^{\frac{3}{4}}\ge\kappa_0^{\frac{3}{4}}\ge1~.\end{align*} 
Taking the constant $\kappa_1\ge\kappa_0$ such that $\frac{c_{1,d,r}C}{\kappa_1^{\frac{1}{4}}}\le \frac{1}{2},$ we get for every $\kappa\ge\kappa_1$ 
\begin{align*}
 \Big|\left|\nabla V(q)\right|-|\nabla V_{j}^2(q)|\Big|\le |\nabla V(q)-\nabla V_{j}^2(q)|\le \frac{1}{2}|\nabla V(q)|~,
\end{align*}
for any $q\in \mathrm{supp}\;\chi_j$ when $j\in J(\kappa).$
Therefore
\begin{align*}
\frac{1}{2}|\nabla V_{j}^2(q)|\le\left|\nabla V(q)\right|\le \frac{3}{2}|\nabla V_{j}^2(q)|
\end{align*}
holds for all $q\in \mathrm{supp}\;\chi_j$ when $j\in J(\kappa).$

On the other hand when $|\alpha|=2,$ by (\ref{2.77}) and (\ref{2.9m}) there is a constant $c_{2,d,r}>0$ so that for all $j\in\mathbb{N}$
\begin{align}
|\partial_q^{\alpha}V(q)-\partial_q^{\alpha}V_{j}^2(q)|&\le  c_{2,d,r}C^2R_{V}^{^{\ge 3}}(q)^{2}\;.\label{2.1000}
\end{align}
holds for every $q\in \mathrm{supp}\;\chi_j,$ where $c_{2,d,r}=\sum\limits_{\substack{3\le\left|\beta\right|\le r }}\beta!\;a^{-\left|\beta\right|+2}.$ 
Given $\kappa\ge \kappa_0$ assume now $ j\not\in J(\kappa).$
Using the fact that $R_V^{^{\ge3}}(q)\ge R_V^{=r}(0)$ for every $q\in\mathbb{R}^d,$  we derive from (\ref{2.1000}) that 
\begin{align*}
|\partial_q^{\alpha}V(q)-\partial_q^{\alpha}V_{j}^2(q)|\le  c_{2,d,r}C^2\frac{R_{V}^{^{\ge 3}}(q)^{4}}{R_V^{=r}(0)^2}~,
\end{align*}
for all $q\in \mathrm{supp}\;\chi_j.$

Assuming $\kappa\ge\kappa_0$ and $\;j\notin J(\kappa),$ we obtain using the previous inequality and applying Lemma~\ref{lem2.1}
\begin{align*}
\Big|\sum\limits_{|\alpha|=2}|\partial_q^{\alpha}V(q)|-\sum\limits_{|\alpha|=2}|\partial_q^{\alpha}V_{j}^2(q)|\Big|\le \sum\limits_{|\alpha|=2}|\partial_q^{\alpha}V(q)-\partial_q^{\alpha}V_{j}^2(q)|\le\frac{1}{2}|\mathrm{Hess}\; V(q)|\;,
\end{align*}
for any $q\in\mathrm{supp}\;\chi_j$ with $|q|\ge C_2(\kappa)$ where $C_2(\kappa)$ is a strictly positive large constant depending on $\kappa$\,. In other words,
\begin{align*}
\frac{1}{2}|\mathrm{Hess}\; V_{j}^2(q)|\le|\mathrm{Hess}\; V(q)|\le \frac{3}{2}|\mathrm{Hess}\; V_{j}^2(q)|
\end{align*}
holds for all  $q\in \mathrm{supp}\;\chi_j$ with $|q|\ge C_2(\kappa)$ and $j\notin J(\kappa).$
\end{proof}
\begin{lem}\label{lem2.3M}
Given two positive operators $A$ and $B$  such that $$\|u\|^2< \langle u,Au\rangle\le \langle u,Bu\rangle$$ for all $u\in \mathcal{D}$ where $\mathcal{D}$ is dense in $D(A^{1/2}),$ one has
\begin{align}\langle u,\frac{A^{\alpha_0}}{(\log(A^{\alpha_0/2}))^k}u\rangle\le \langle u,\frac{B^{\alpha_0}}{(\log(B^{\alpha_0/2}))^k}u\rangle~,\label{2.144M}\end{align}
for all $u\in \mathcal{D},$ any $\alpha_0\in[0,1]$ and every natural number $k.$
\end{lem}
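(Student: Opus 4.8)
The plan is to reduce the claim to a scalar statement about the operator monotonicity of the function $f_k(s) = s^{\alpha_0}/(\log(s^{\alpha_0/2}))^k$ on the spectrum of $A$ and $B$. First I would observe that the hypothesis $\|u\|^2 < \langle u, Au\rangle$ forces $A > I$, hence (on the relevant dense set) $A \geq I$ in the form sense and similarly for $B$, so both operators have spectrum in $[1,\infty)$ where $\log(s^{\alpha_0/2}) = \tfrac{\alpha_0}{2}\log s$ is well-defined and nonnegative; strictly speaking one should take a version that is bounded (e.g.\ $\log$ replaced by $\log(e + \cdot)$ or interpret $1/(\log 1)^k$ as $0$ via a limiting convention, but the monotonicity argument is unaffected). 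The key point is then: if $0 < A \le B$ in the sense of quadratic forms on a common core, and if $g:(0,\infty)\to\mathbb{R}$ is an operator-monotone function, then $g(A)\le g(B)$; so it suffices to show that $s\mapsto f_k(s) = s^{\alpha_0}/(\tfrac{\alpha_0}{2}\log s)^k$ (suitably regularized near $s=1$) is operator monotone on $[1,\infty)$ for every $\alpha_0\in[0,1]$ and every $k\in\mathbb{N}$.

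Next I would establish that operator monotonicity. The cleanest route is Löwner's theorem together with the fact that the class of operator-monotone functions on a half-line is a cone that is closed under composition and under the operations $s\mapsto s^{\alpha}$ for $\alpha\in[0,1]$ (which is operator monotone, a classical fact), $s \mapsto \log s$ (operator monotone on $(0,\infty)$), and $s\mapsto -1/s = s/( \cdots )$ type Möbius pieces. Concretely, write $f_k(s) = \big(s^{1/k}/(\tfrac{\alpha_0}{2}\log s)\big)^k$ is not quite a clean composition, so instead I would argue: $s \mapsto s^{\alpha_0}$ is operator monotone, and $s\mapsto (\log s)^{-k}$ is operator monotone decreasing... but products of operator monotone functions need not be operator monotone, so that naive factorization fails. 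The honest approach is the integral-representation one: show $f_k$ has an analytic continuation to the upper half-plane mapping it into itself (a Pick/Nevanlinna function), which by Löwner's theorem is equivalent to operator monotonicity; for $s^{\alpha_0}/(\log s)^k$ on $[1,\infty)$ this can be checked by exhibiting the Herglotz representation, or — more in the spirit of this paper — by noting $\frac{d}{ds}\big[ s^{\alpha_0}(\log s)^{-k}\big] \ge 0$ is not sufficient and one truly needs the $2$-positivity of the divided-difference (Löwner) matrix. Since the paper only needs this for the specific function, I would actually prefer the following elementary substitute that avoids Löwner entirely.

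The elementary substitute, and the route I would actually take: write $\phi(s) = f_k(s)/s = s^{\alpha_0-1}/(\log s^{\alpha_0/2})^k$ and instead use the spectral theorem directly on the difference. Given $A\le B$, for a fixed unit-normalizable $u$ one wants $\langle u, f_k(A)u\rangle \le \langle u, f_k(B)u\rangle$. Decompose $f_k(s) = \int_0^\infty \big(1 - e^{-ts}\big)\,d\mu_k(t) + (\text{linear part})$ if $f_k$ is a Bernstein-type function, since Bernstein functions (those with completely monotone derivative) are operator monotone and the representation $f_k(s) = a + bs + \int_0^\infty (1-e^{-ts})\nu(dt)$ makes the monotonicity transparent: $\langle u, e^{-tA}u\rangle \ge \langle u, e^{-tB}u\rangle$ because $-A \ge -B$ and $x \mapsto e^x$ is operator monotone, so $\langle u,(1-e^{-tA})u\rangle \le \langle u,(1-e^{-tB})u\rangle$, and integrating against $\nu \ge 0$ gives the result. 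Thus the whole proof reduces to: \emph{verify that $s\mapsto s^{\alpha_0}/(\log s^{\alpha_0/2})^k$, extended by $0$ (or by its limit) at $s=1$, is a Bernstein function on $[1,\infty)$ for all $\alpha_0\in[0,1]$, $k\in\mathbb{N}$}, which is a one-variable computation: differentiate, and show the derivative is completely monotone, e.g.\ by writing it as a positive combination/product of completely monotone pieces $s^{\alpha_0-1}$ and $(\log s)^{-k-1}(\text{const} - (\log s)/\alpha_0 \cdot \text{stuff})$, or by the substitution $s = e^{r}$, $r\ge 0$, turning it into $e^{(\alpha_0-1)r}(\tfrac{\alpha_0}{2}r)^{-k}$ whose complete monotonicity in $r$ (hence the required sign conditions) is classical since it is a product of the completely monotone $e^{(\alpha_0-1)r}$ (as $\alpha_0-1\le 0$) and $r^{-k}$.

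The main obstacle is exactly this last verification — pinning down the correct regularization at $s=1$ (where $\log s^{\alpha_0/2}$ vanishes) so that $f_k$ is genuinely well-defined, nonnegative, and Bernstein on the closed half-line $[1,\infty)$, rather than just on $(1,\infty)$; near $s=1$ the function blows up like $(\log s)^{-k}$, which is fine (it stays operator monotone, tending to $+\infty$), but one must make sure the form inequality $A \le B$ with $A > I$ really places the spectra in a region where all manipulations are licit and no boundary term at $s=1$ is lost. Everything else — the passage from the scalar Bernstein property to the operator inequality via $e^{-tA}\ge e^{-tB}$ and integration, and the reduction of $(2.144M)$ to the case $\alpha_0 = 1$ by replacing $(A,B)$ with $(A^{\alpha_0}, B^{\alpha_0})$ using that $s\mapsto s^{\alpha_0}$ is itself operator monotone so $A\le B \Rightarrow A^{\alpha_0}\le B^{\alpha_0}$ and $A > I \Rightarrow A^{\alpha_0} > I$ — is routine.
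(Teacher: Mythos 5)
Your reduction of the lemma to the operator monotonicity (equivalently, the Bernstein property) of $f_k(s)=s^{\alpha_0}/(\log s^{\alpha_0/2})^k$ on $[1,\infty)$ cannot work, because that scalar claim is false. Since $(\log s^{\alpha_0/2})^k=(\alpha_0/2)^k(\log s)^k$, it suffices to look at $s^{\alpha_0}(\log s)^{-k}$, and
\begin{align*}
\frac{d}{ds}\,\frac{s^{\alpha_0}}{(\log s)^{k}}
=\frac{s^{\alpha_0-1}\bigl(\alpha_0\log s-k\bigr)}{(\log s)^{k+1}}\,,
\end{align*}
which is strictly negative for $1<s<e^{k/\alpha_0}$. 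So $f_k$ is strictly decreasing on a whole interval to the right of $1$; a fortiori it is neither operator monotone nor Bernstein on $[1,\infty)$ (Bernstein functions are nonnegative and nondecreasing). Concretely, with $k=1$, $\alpha_0=1$: $f_1(2)=4/\log 2\approx 5.77$ while $f_1(3)=6/\log 3\approx 5.46$, so your intermediate claim fails already for scalar multiples of the identity. Your parenthetical ``it stays operator monotone, tending to $+\infty$'' near $s=1$ is exactly where the argument breaks: a function finite for large $s$ that tends to $+\infty$ as $s\downarrow 1$ must decrease somewhere. The supporting computation is also off target: after the substitution $s=e^{r}$ you analyze $f_k(e^{r})e^{-r}=e^{(\alpha_0-1)r}(\alpha_0 r/2)^{-k}$, but complete monotonicity of $f_k(e^{r})e^{-r}$ in $r$ is not the Bernstein property of $f_k$ in $s$ (that would require $f_k'$ to be completely monotone in $s$), so even where $f_k$ is increasing the verification would not close.

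The paper never asserts that $f_k$ is operator monotone, and its route avoids the issue you run into. It proves (i) $\langle u,A^{\alpha}u\rangle\le\langle u,B^{\alpha}u\rangle$ for all $\alpha\in[0,1]$ via the resolvent integral representation of $C^{\alpha}$ (this part agrees with your use of the operator monotonicity of $s\mapsto s^{\alpha}$); then (ii) integrates this one-parameter family of inequalities in $\alpha$ over $[0,\alpha_0]$, which yields the form inequality for $\int_0^{\alpha_0}s^{\alpha}\,d\alpha=(s^{\alpha_0}-1)/\log s$ without requiring $s^{\alpha_0}/(\log s)^k$ itself to be matrix monotone; and (iii) treats the leftover $1/\log$ term and the power $k$ separately (anti-monotonicity of the inverse plus an induction on $k$), using the additional information $\log A\ge I$. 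To repair your argument you would need either to import such quantitative spectral lower bounds pushing the spectra past the decreasing region $s<e^{k/\alpha_0}$, or to replace pointwise monotonicity of $f_k$ by an averaging-in-the-exponent argument of the paper's type; as written, the central one-variable claim you defer to is simply not true.
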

\begin{proof}
Assume that $A,B$ are two positive operators so that  \begin{align}
\|u\|^2< \langle u,Au\rangle\le \langle u,Bu\rangle~,\label{2.155MM}\end{align}
holds for all $u\in\mathcal{D}.$
Referring to \cite{Sim} (see Proposition 6.7 and Example 6.8), for any positive operator $C$ and every $\alpha\in(0,1)$ we can write 
\begin{align}C^{\alpha}=\frac{2\sin(\pi\alpha)}{\pi}\int_0^{+\infty}w^{\alpha-1}(C+w)^{-1}Cdw~.\label{2.2215M}\end{align}
From (\ref{2.155MM}) and (\ref{2.2215M})
 \begin{align}
\|u\|^2< \langle u,A^{\alpha}u\rangle\le \langle u,B^{\alpha}u\rangle~,\label{2.115M}
\end{align} for any  $u\in \mathcal{D}$ and every $\alpha\in[0,1].$

Furthermore, for any positive operator $C$ with domain $D(C)$ we define its logarithm for all $u\in D(C)$ by 
\begin{align}
\langle u,\log(C)u\rangle=\lim\limits_{\alpha \to0^+} \langle u,\frac{C^{\alpha}-1}{\alpha}u\rangle~,\label{2.18MM}
\end{align}
where the operator $C^{\alpha}$ is given in (\ref{2.2215M}).

Using (\ref{2.155MM}) and (\ref{2.18MM}) \begin{align}\|u\|^2< \langle u,\log(A)u\rangle\le \langle u,\log(B)u\rangle~,\label{2.116M}
\end{align}
holds for all $u\in \mathcal{D}.$ Integrating (\ref{2.115M}) with respect to $\alpha$ over $[0,\alpha_0]$ where $\alpha_0\in[0,1]$ we get 
\begin{align}\langle u,\frac{1}{\log(A)}(A^{\alpha_0}-I)u\rangle\le \langle u,\frac{1}{\log(B)}(B^{\alpha_0}-I)u\rangle~.\label{2.161M}
\end{align}
Furthermore by (\ref{2.116M}) \begin{align}
\langle u,\frac{1}{\log(B)}u\rangle\le \langle u,\frac{1}{\log(A)}u\rangle<\|u\|^2~.\label{2.171M}
\end{align}
Therefore from (\ref{2.161M}) and (\ref{2.171M})
$$\langle u,\frac{A^{\alpha_0}}{\log(A)}u\rangle\le \langle u,\frac{B^{\alpha_0}}{\log(B)}u\rangle~.$$
holds for any $\alpha_0\in[0,1].$
Then by induction on $k\in\mathbb{N},$ we obtain 
$$\langle u,\frac{A^{\alpha_0}}{(\log(A))^k}u\rangle\le \langle u,\frac{B^{\alpha_0}}{(\log(B))^k}u\rangle~,$$
for all $\alpha_0\in[0,1]$ and every natural number $k.$
Or equivalently 
$$\langle u,\frac{A^{\alpha_0}}{(\log(A^{\alpha_0/2}))^k}u\rangle\le \langle u,\frac{B^{\alpha_0}}{(\log(B^{\alpha_0/2}))^k}u\rangle~,$$
for every $\alpha_0\in[0,1],\;k\in\mathbb{N}.$

\end{proof}
\begin{lem}\label{lem2.2} Assume $V(q)$ a polynomial of degree greater than two. Let $\sum\limits_{j\in\mathbb{N}}\chi^2_j(q)$ be a locally finite  partition of unity defined as in (\ref{2.333M}).

There is a constant $c>0$ such that
\begin{align}
\langle u,(1+D_q^2+R^{^{\ge3}}_{V}(q)^4)^{\alpha}u\rangle\le c\sum\limits_{j\in\mathbb{N}}\langle u,\chi_j(1+D_q^2+R^{^{\ge3}}_{V}(q'_j)^4)^{\alpha}\chi_ju\rangle~,
\end{align}
is valid for all $u\in\mathcal{C}_0^{\infty}(\mathbb{R}^{2d})$ and any $\alpha\in[0,1].$

As a consequence, there exists a constant $c>0$ so that
\begin{align}\sum_{j\in\mathbb{N}}\|L\Big((1+D_q^2+R_{V}^{^{\ge3}}(q'_j)^4)^{\frac{1}{3}}\Big)\chi_ju\|^2\ge \frac{1}{c}\|L\Big((1+D_q^2+R_{V}^{^{\ge3}}(q)^4)^{\frac{1}{3}}\Big)u\|^2~,\label{2.17M}\end{align}
holds for all $u\in\mathcal{C}_0^{\infty}(\mathbb{R}^{2d}),$ where $L(s)=\frac{s+1}{\log(s+1)}$ for all $s\ge1.$
\end{lem}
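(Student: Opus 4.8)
The plan is to derive the first inequality from an IMS-type (localization) formula and then feed it into Lemma~\ref{lem2.3M} to obtain the logarithmically weighted estimate~(\ref{2.17M}). First I would set $G(q)=1+D_q^2+R_V^{\ge 3}(q)^4$ and, for each $j$, $G_j(q)=1+D_q^2+R_V^{\ge 3}(q'_j)^4$, all understood as nonnegative self-adjoint operators on $L^2(\mathbb{R}^{2d})$. The key structural point is the equivalence $R_V^{\ge 3}(q)\asymp R_V^{\ge 3}(q'_j)$ on $\mathrm{supp}\,\chi_j$ established in Lemma~\ref{A.5} (used in the proof of Lemma~\ref{lem2.3}), which gives two-sided bounds $c^{-1}G_j\le G\le c\,G_j$ as quadratic forms on functions supported in $\mathrm{supp}\,\chi_j$, with $c$ independent of $j$. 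Hence it suffices to prove the claim with $G_j$ replaced by $G$ on the right-hand side, i.e. $\langle u,G^\alpha u\rangle\le c\sum_j\langle \chi_j u,G^\alpha\chi_j u\rangle$ and conversely, modulo commutator errors.

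The main step is therefore a commutator estimate: since $\sum_j\chi_j^2=1$, one has $\langle u,G^\alpha u\rangle=\sum_j\langle \chi_j u,G^\alpha \chi_j u\rangle+\tfrac12\sum_j\langle u,[\chi_j,[\chi_j,G^\alpha]]u\rangle$. For $\alpha=1$ the double commutator $[\chi_j,[\chi_j,G]]=[\chi_j,[\chi_j,D_q^2]]=-2|\partial_q\chi_j|^2$, and since $|\partial_q\chi_j|\lesssim R_V^{\ge 3}(q_j)$ on $\mathrm{supp}\,\chi_j$ while $R_V^{\ge 3}(q_j)^2\lesssim R_V^{\ge 3}(q_j)^4+1\le G_j$ (for $R_V^{\ge3}(q_j)$ bounded below by a positive constant, using $R_V^{\ge3}(q)\ge R_V^{=r}(0)>0$), this error is absorbed by a fraction of the right-hand side. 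For general $\alpha\in(0,1)$ I would pass through the integral representation $G^\alpha=\frac{2\sin(\pi\alpha)}{\pi}\int_0^\infty w^{\alpha-1}(G+w)^{-1}G\,dw$ already recalled in~(\ref{2.2215M}); writing $[\chi_j,(G+w)^{-1}]=-(G+w)^{-1}[\chi_j,G](G+w)^{-1}$ and iterating, the double commutator $[\chi_j,[\chi_j,G^\alpha]]$ is controlled in form by $|\partial_q\chi_j|^2$ times a bounded factor after the $w$-integration, again because the local finiteness of the partition (each $j$ meets only boundedly many $j'$) and the equivalence $R_V^{\ge3}(q)\asymp R_V^{\ge3}(q_j)$ keep all constants uniform in $j$. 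This yields the first displayed inequality, and the reverse inequality $\sum_j\langle\chi_ju,G_j^\alpha\chi_ju\rangle\le c\langle u,G^\alpha u\rangle$ follows the same way.

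For the consequence~(\ref{2.17M}), I would apply Lemma~\ref{lem2.3M} with $\alpha_0=2/3$ and $k=2$. Concretely, from the first part with $\alpha=1$ one has, locally on $\mathrm{supp}\,\chi_j$, the form inequality $\|\chi_j u\|^2<\langle\chi_j u,G\,\chi_j u\rangle$ and, after summing, the comparison between $\sum_j\langle\chi_j u,G_j^{1/3}\chi_j u\rangle$-type quantities and $\langle u,G^{1/3}u\rangle$; more directly, applying Lemma~\ref{lem2.3M} to the pair $A=G_j$ and $B=cG$ (valid as forms on $\chi_j$-localized functions) with $\alpha_0=2/3$, $k=2$ converts the polynomial comparison into the comparison of $L\big(G_j^{1/3}\big)^2=\frac{G_j^{2/3}}{(\log G_j^{1/3})^2}$ with $L\big(G^{1/3}\big)^2$ up to a constant, since $L(s)^2=\frac{(s+1)^2}{(\log(s+1))^2}$ and $s\mapsto L(s^{1/2})^2$ is comparable to $s^{\,\alpha_0}/(\log s^{\alpha_0/2})^{k}$ with $\alpha_0=2/3$, $k=2$ for $s$ large, the small-$s$ regime being harmless because everything is bounded below. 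Summing over $j$ and using once more the local finiteness and the equivalence of $R_V^{\ge3}(q)$ and $R_V^{\ge3}(q'_j)$ on $\mathrm{supp}\,\chi_j$ then gives~(\ref{2.17M}).

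I expect the main obstacle to be the uniform-in-$j$ control of the double commutators $[\chi_j,[\chi_j,G^\alpha]]$ for non-integer $\alpha$: one must check that, after the $w$-integral in~(\ref{2.2215M}), the resolvent factors do not spoil the bound by $|\partial_q\chi_j|^2$ times a constant, and that the constant is genuinely independent of $j$. This is where the scaling $\chi_j(q)=\widetilde\chi_j(R_V^{\ge3}(q_j)^{-1}(q-q_j))$ and the equivalence~(\ref{2.9m}) are essential, since they make $|\partial_q^\beta\chi_j|\lesssim R_V^{\ge3}(q_j)^{|\beta|}\lesssim (G_j)^{|\beta|/2}$ with uniform constants. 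The reduction to the case with $G$ in place of $G_j$ on the right, and the passage from the polynomial inequality to the $L$-weighted one via Lemma~\ref{lem2.3M}, are then routine.
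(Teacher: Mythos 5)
Your route is genuinely different from the paper's, and as written it has two gaps. For the first inequality the paper does not use an IMS formula at all: it introduces the map $T:u\mapsto(\chi_ju)_{j}$, checks the two endpoint norm equivalences $\|Tu\|_{F_0}=\|u\|_{E_0}$ and $\|Tu\|_{F_1}\asymp\|u\|_{E_1}$ (the latter by the elementary computation with $\sum_j|\nabla\chi_j|^2\le cR_V^{\ge3}(q)^2$ and $R_V^{\ge3}(q)\asymp R_V^{\ge3}(q_j')$ on $\mathrm{supp}\,\chi_j$), and then gets all $\alpha\in[0,1]$ by interpolation. This is exactly designed to avoid the step you flag as your ``main obstacle'', namely a uniform-in-$j$ bound on $\sum_j[\chi_j,[\chi_j,G^\alpha]]$ for non-integer $\alpha$. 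That obstacle is real and is not resolved in your proposal: the claim that the double commutator is ``controlled in form by $|\partial_q\chi_j|^2$ times a bounded factor'' is not enough. First, an error of size $\sum_jR_V^{\ge3}(q_j)^2\|u\|^2_{L^2(\mathrm{supp}\,\chi_j)}$ cannot be absorbed by $\sum_j\langle\chi_ju,G_j^\alpha\chi_ju\rangle\gtrsim\sum_jR_V^{\ge3}(q_j')^{4\alpha}\|\chi_ju\|^2$ when $\alpha<1/2$ and $R_V^{\ge3}(q_j)\to\infty$; you need the extra decay $G^{\alpha-1}$ coming from the resolvents. Second, extracting that decay while keeping the sum over $j$ finite is delicate because $(G+w)^{-1}$ delocalizes: the cross terms $(G+w)^{-1}[\chi_j,G](G+w)^{-1}[\chi_j,G](G+w)^{-1}$ estimated in operator norm give a bound $\sim c_\alpha(R_j^2+R_j^4)\|u\|^2$ \emph{per} $j$ with no localization of $u$, whose sum over $j$ diverges. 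Making this work requires retaining the cutoff supports through the resolvents (further commutator expansions or weighted resolvent estimates), i.e.\ precisely the technical work the interpolation argument bypasses.

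For the consequence \eqref{2.17M}, the ``more direct'' step is flawed: the form inequality $c^{-1}G_j\le G\le cG_j$ holds only on functions supported in $\mathrm{supp}\,\chi_j$, not on the whole form domain (globally it is false, since $R_V^{\ge3}(q)$ is unbounded while $R_V^{\ge3}(q_j')$ is a constant), and operator monotonicity --- which is what Lemma~\ref{lem2.3M} encodes through the representation \eqref{2.2215M} --- cannot be applied to a pair of operators that are only comparable on a subspace. Moreover, even if one could compare $L(G_j^{1/3})$ with $L(G^{1/3})$ on $\chi_j$-localized functions, summing over $j$ would produce $\sum_j\langle\chi_ju,L(G^{1/3})^2\chi_ju\rangle$, not $\langle u,L(G^{1/3})^2u\rangle$, so the localization problem you set out to solve reappears. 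The correct path (which the paper follows, and which your first, vaguer sentence points toward) is to take the whole family of inequalities $\|u\|^2<\langle u,G^\alpha u\rangle\le c\sum_j\langle\chi_ju,G_j^\alpha\chi_ju\rangle$, $\alpha\in[0,1]$, from the first part and rerun the \emph{proof} of Lemma~\ref{lem2.3M} on it: integrate in $\alpha$ over $[0,2/3]$, interchange the (locally finite) sum with the integral, and iterate to produce the $\log^2$; the sum over $j$ plays the role of the single operator $B$ throughout.
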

\begin{proof} We first set $E_0=L^2(\mathbb{R}^{2d})$ and $E_1=\left\{u\in L^2(\mathbb{R}^{2d}),\;\langle u,(1-\Delta_q+R_{V}^{^{\ge 3}}(q)^4)u\rangle<+\infty\right\}$ endowed respectively with the norms $\|\cdot\|_{E_0}=\|\cdot\|_{L^2(\mathbb{R}^{2d})}$ and $\|\cdot\|_{E_1}$ defined as follows for all $u\in L^2(\mathbb{R}^{2d})$ \begin{align*}
\|u\|^2_{E_1}&=\|u\|^2_{L^2(\mathbb{R}^{2d})}+\|D_qu\|^2_{L^2(\mathbb{R}^{2d})}+\|R_{V}^{^{\ge3}}(q)^2u\|^2_{L^2(\mathbb{R}^{2d})}\\&=\|(1-\Delta_q+R_{V}^{^{\ge3}}(q)^4)^{1/2}u\|^2_{L^2(\mathbb{R}^{2d})}~.
\end{align*}
By Simader theorem (which states that if $W\in\mathcal{C}^{\infty}(\mathbb{R}^d)$ and $-\Delta+W(x)$ is a symetric non negative operator on $\mathcal{C}_0^{\infty}(\mathbb{R}^{d})$ then $-\Delta+W(x)$ is essentially self adjoint on $\mathcal{C}_0^{\infty}(\mathbb{R}^{d})$), the operator $1-\Delta_q+R_{V}^{^{\ge3}}(q)^4$ is essentially self adjoint on $\mathcal{C}_0^{\infty}(\mathbb{R}^{2d})$ and hence $E_1$ corresponds to the spectrally defined subspace of $L^2(\mathbb{R}^{2d})$.

Given a partition of unity as in (\ref{2.333M}), define the linear map \begin{align*}T:E_0\to (L^2(\mathbb{R}^{2d}))^{\mathbb{N}},\;u\mapsto (u_j)_{j\in\mathbb{N}}=(\chi_ju)_{j\in\mathbb{N}}~,
\end{align*} and denote $F_0:=\mathrm{Im}\;T.$
Notice that $T:E_0\to F_0$ is a unitary isometry. Indeed for all $u\in E_0,$ \begin{align}\;\|Tu\|^2_{F_0}=\sum\limits_{j\in\mathbb{N}} \|\chi_ju\|^2_{L^2}=\|u\|^2_{L^2}=\|u\|^2_{E_0}~,\label{2.14M}\end{align} further the inverse map of T is well defined by \begin{align*}T^{-1}:F_0\to E_0,\;(u_j)_{j\in\mathbb{N}}\mapsto u=\sum\limits_{j\in\mathbb{N}}\chi_ju_j~.\end{align*}
Now introduce the set 
\begin{align*}
F_1=\left\{(u_j)_{j\in\mathbb{N}}\in F_0,\;\sum\limits_{j\in\mathbb{N}}\langle u_j,(1-\Delta_q+R_{V}^{^{\ge3}}(q_j')^4)u_j\rangle<+\infty\right\}~,
\end{align*}
with its associated norm defined for all $(u_j)_{j\in\mathbb{N}}\in F_1$ by  
\begin{align*}
\|(u_j)_{j\in\mathbb{N}}\|^2_{F_1}&=\sum\limits_{j\in\mathbb{N}}\Big(\|u_j\|^2_{L^2(\mathbb{R}^{2d})}+\|D_qu_j\|^2_{L^2(\mathbb{R}^{2d})}+\|R_{V}^{^{\ge3}}(q_j')^2u_j\|^2_{L^2(\mathbb{R}^{2d})}\Big)\\&=\sum\limits_{j\in\mathbb{N}}\|(1-\Delta_q+R_{V}^{^{\ge3}}(q_j')^4)^{1/2}u_j\|^2_{L^2(\mathbb{R}^{2d})}~.
\end{align*}
Assume $u\in E_0.$ For all $j\in\mathbb{N},$ let $q_j'\in \mathrm{supp}\;\chi_j.$ Observe that
\begin{align}
|\;\|Tu\|^2_{F_1}-\|u\|^2_{E_1}|&=|\sum\limits_{j\in\mathbb{N}}\langle u_j,(1-\Delta_q+R_{V}^{^{\ge3}}(q_j')^4)u_j\rangle-\langle u,(1-\Delta_q+R_{V}^{^{\ge3}}(q)^4)u\rangle|\nonumber\\&=|\sum\limits_{j\in\mathbb{N}}\langle u_j,-\Delta_qu_j\rangle-\langle u,-\Delta_qu\rangle+\sum\limits_{j\in\mathbb{N}}\langle u_j,(R_{V}^{^{\ge3}}(q_j')^4-R_{V}^{^{\ge3}}(q)^4)u_j\rangle|\nonumber\\&\le |\sum\limits_{j\in\mathbb{N}}\langle u_j,-\Delta_qu_j\rangle-\langle u,-\Delta_qu\rangle|+\sum\limits_{j\in\mathbb{N}}\langle u_j,|R_{V}^{^{\ge3}}(q_j')^4-R_{V}^{^{\ge3}}(q)^4|u_j\rangle~.\label{2.233M}
\end{align}
Since we are dealing with cutoff functions satisfying $\sum\limits_{j\in\mathbb{N}}|\nabla\chi_j|^2\le c\,R_{V}^{^{\ge3}}(q)^2$ and owning to the equivalence $R_{V}^{^{\ge3}}(q)\asymp R_{V}^{^{\ge3}}(q'_j)$ for all $q\in\mathrm{supp}\;\chi_j$, it follows from (\ref{2.233M})
\begin{align*}
|\;\|Tu\|^2_{F_1}-\|u\|^2_{E_1}|\le c_1\sum\limits_{j\in\mathbb{N}}\langle u_j,R_{V}^{^{\ge3}}(q_j')^4u_j\rangle\le c_1\|Tu\|^2_{F_1}~,
\end{align*}
and \begin{align*}
|\;\|Tu\|^2_{F_1}-\|u\|^2_{E_1}|\le c'_1\langle u,R_{V}^{^{\ge3}}(q)^4u\rangle\le c_1'\|u\|^2_{E_1}~,
\end{align*}
where $c_1,c_1'$ are two strictly positive constants.
As a result \begin{align}\frac{1}{\sqrt{(c_1+1)}}\|u\|_{E_1}\le\|Tu\|_{F_1}\le \sqrt{(c_1'+1)}\|u\|_{E_1}~.\label{2.15M}\end{align}
In view of (\ref{2.14M}) and (\ref{2.15M}), we conclude by interpolation that for all $\alpha\in[0,1]$
\begin{align*}
T:E_{\alpha}\to F_{\alpha},
\end{align*}
verifies $\|T\|_{\mathcal{L}(E_{\alpha},F_{\alpha})}\le c^{\alpha}$ and $\|T^{-1}\|_{\mathcal{L}(F_{\alpha},E_{\alpha})}\le c^{\alpha}$, where $E_{\alpha}$ and $F_{\alpha}$ are two interpolated spaces endowed respectively with the norms 
\begin{align*}
\|u\|_{E_{\alpha}}=\|(1-\Delta_q+R_{V}^{^{\ge3}}(q)^4)^{\alpha/2}u\|_{L^2(\mathbb{R}^{2d})}~,
\end{align*}
and 
\begin{align*}
\|(v_j)_{j\in \mathbb{N}}\|_{F_{\alpha}}=\sum\limits_{j\in\mathbb{N}}\|(1-\Delta_q+R_{V}^{^{\ge3}}(q_j')^4)^{\alpha/2}u_j\|_{L^2(\mathbb{R}^{2d})}~.
\end{align*}
Hence there is a constant $c>0$ so that 
\begin{align}
\langle u,(1+D_q^2+R^{^{\ge3}}_{V}(q)^4)^{\alpha}u\rangle\le c\sum\limits_{j\in\mathbb{N}}\langle u,\chi_j(1+D_q^2+R^{^{\ge3}}_{V}(q'_j)^4)^{\alpha}\chi_ju\rangle~,\label{2.99}
\end{align}
holds for all $u\in\mathcal{C}_0^{\infty}(\mathbb{R}^{2d})$ and any $\alpha\in[0,1].$
In order to prove (\ref{2.17M}), repeat the same process as in Lemma~\ref{lem2.3M}. Starting with
\begin{align}
\|u\|^2< \langle u,(1+D_q^2+R^{^{\ge3}}_{V}(q)^4)^{\alpha}u\rangle\le c\sum\limits_{j\in\mathbb{N}}\langle u,\chi_j(1+D_q^2+R^{^{\ge3}}_{V}(q'_j)^4)^{\alpha}\chi_ju\rangle~,\label{2.224M}
\end{align}
for all $u\in\mathcal{C}_0^{\infty}(\mathbb{R}^{2d})$ and any $\alpha\in[0,1],$ remark that when integrating over $\alpha\in[0,\frac{2}{3}]$ we can interchange the sum and the integral in the left hand side of (\ref{2.224M}) since the partition of unity is locally finite.

This completes the proof.
\end{proof}
\section{Proof of Theorem~\ref{thm1.1}}
In this section we present the proof of Theorem~\ref{thm1.1}. In the
sequel for a given polynomial $V(q)$ with degree r greater than two, we always use a locally finite partition of unity \begin{align*}\sum\limits_{j\in\mathbb{N}}\chi_j^2(q)=\sum\limits_{j\in\mathbb{N}}\widetilde{\chi}_j^2\Big(R_{V}^{^{\ge3}}(q_j)^{-1}(q-q_j)\Big)=1~,\end{align*}where $$\mathrm{supp}\;\widetilde{\chi}_j\subset B(q_j,a)\;\text{and }\;\widetilde{\chi}_j\equiv 1\;\text{in }\; B(q_j,b)$$ for some $q_j\in\mathbb{R}^d$ with $0<b<a$ independent of the natural numbers $j,$ defined more specifically as in Lemma~\ref{lemA.6} with $n=3.$
\begin{proof}
Let $V(q)$ be a polynomial with degree larger than two that satisfies Assumption~\ref{assumption1}. Assume $u\in\mathcal{C}_0^{\infty}(\mathbb{R}^{2d}).$ In the whole proof we denote $u_j=\chi_ju$ for all natural number $j.$

From Lemma~\ref{lem2.11} we get
\begin{align}
\|K_{V}u\|^2_{L^2(\mathbb{R}^{2d})}\ge\sum\limits_{j\in\mathbb{N}}\|K_{V}u_j\|^2_{L^2(\mathbb{R}^{2d})}-c_dR_{V}^{^{\ge3}}(q_j)^2\|pu_j\|^2_{L^2(\mathbb{R}^{2d})}\label{2.1}~.
\end{align} 
Given $\kappa\ge\kappa_0,$ set
\begin{align*}
J(\kappa)=\Big\lbrace j\in \mathbb{N},\; \text{such that}\;\; \mathrm{supp}\;\chi_j\subset \Sigma(\kappa)\Big\rbrace~.\end{align*}
For all index $j\in \mathbb{N},$ let $V_{j}^2$ be the polynomial of degree less than three given by 
\begin{align*}
V_{j}^2(q)=\sum_{0\le \left|\alpha\right|\le 2}\frac{\partial^{\alpha}_qV(q'_j)}{\alpha!}(q-q'_j)^{\alpha}~,
\end{align*}
 where \begin{align*}\left\{
    \begin{array}{ll}
        q'_j=q_j & \mbox{if}\; j\in J(\kappa)\\
        q'_j\in  (\mathrm{supp}\;\chi_j)\cap\Big(\mathbb{R}^d\setminus \Sigma(\kappa)\Big) & \mbox{else.}
    \end{array}
\right.\end{align*}
  
We associate with each polynomial $V_{j}^2$ the Kramers-Fokker-Planck operator $K_{V_{j}^2}.$ Observe that using the parallelogram law $2(\|x\|^2+\|y\|^2)-\|x+y\|^2=\|x-y\|^2\geq 0,$
 \begin{align}
\sum\limits_{j\in \mathbb{N}}\|K_{V}u_j\|^2_{L^2(\mathbb{R}^{2d})}&=\sum\limits_{j\in \mathbb{N}}\|K_{V_{j}^2}u_j+(K_{V}-K_{V_{j}^2})u_j\|^2_{L^2(\mathbb{R}^{2d})}\nonumber\\&\ge \frac{1}{2}\sum\limits_{j\in \mathbb{N}}\|K_{V_{j}^2}u_j\|^2_{L^2(\mathbb{R}^{2d})}-\|(\nabla V(q)-\nabla V_{j}^2(q))\partial_pu_j\|^2_{L^2(\mathbb{R}^{2d})}\label{2.3}
\end{align}
On the other hand, by (\ref{2.40}) in Lemma~\ref{lem2.3} 
\begin{align}
\sum\limits_{j\in \mathbb{N}}\|(\nabla V(q)-\nabla V_{j}^2(q))\partial_pu_j\|^2_{L^2(\mathbb{R}^{2d})}\le c_{1,d,r}\sum\limits_{j\in \mathbb{N}}R_{V}^{^{\ge 3}}(q_j')^2\|\partial_pu_j\|^2_{L^2(\mathbb{R}^{2d})}~.\label{2.4}
\end{align}
Combining (\ref{2.1}), (\ref{2.3}) and (\ref{2.4}) we get immediately 
\begin{align*}
\|K_{V}u\|^2_{L^2(\mathbb{R}^{2d})}&\ge\frac{1}{2}\sum\limits_{j\in \mathbb{N}}\|K_{V_{j}^2}u_j\|^2_{L^2(\mathbb{R}^{2d})}-c_{1,d,r}R_{V}^{^{\ge 3}}(q'_j)^2\|\partial_pu_j\|^2_{L^2(\mathbb{R}^{2d})}-c_dR_{V}^{^{\ge3}}(q_j)^2\|pu_j\|^2_{L^2(\mathbb{R}^{2d})}
\end{align*}
Therefore, making use of the equivalence $(\ref{A.55}),$ it follows
\begin{align}
\|K_{V}u\|^2_{L^2(\mathbb{R}^{2d})}&\ge\frac{1}{2}\sum\limits_{j\in \mathbb{N}}\|K_{V_{j}^2}u_j\|^2_{L^2(\mathbb{R}^{2d})}-c'_{d,r}R_{V}^{^{\ge 3}}(q'_j)^2\langle u_j,O_pu_j\rangle_{L^2(\mathbb{R}^{2d})}\;,\label{3.555}
\end{align}
where $c'_{d,r}=2(c_{1,d,r}^2+c_d).$

Using respectively the Cauchy Schwarz inequality then the Cauchy inequality with epsilon (for any real numbers $a,b$ and all $\epsilon>0,\;ab\le \epsilon a^2+\frac{1}{4\epsilon}b^2$),
\begin{align*}
c'_{d,r}R_{V}^{^{\ge 3}}(q'_j)^2\langle u_j,O_pu_j\rangle&=c'_{d,r}R_{V}^{^{\ge 3}}(q'_j)^2\mathrm{Re}\langle u_j,K_{V_{j}^2}u_j\rangle\\&\le c'_{d,r}R_{V}^{^{\ge 3}}(q'_j)^2\| u_j\|\cdot\|K_{V_{j}^2}u_j\|\\&\le \Big(c'_{d,r} R_{V}^{^{\ge 3}}(q'_j)^2\Big)^2\|u_j\|^2+\frac{1}{4}\|K_{V_{j}^2}u_j\|^2~.
\end{align*}
Putting the above estimate and (\ref{3.555}) together we obtain
\begin{align}
\|K_{V}u\|^2\ge \sum\limits_{j\in \mathbb{N}}\frac{1}{4}\|K_{V_{j}^2}u_j\|^2-(c'_{d,r})^2R_{V}^{^{\ge 3}}(q'_j)^4\|u_j\|^2~.\label{3.666M}
\end{align}
From now on assume $\kappa\ge\kappa_1,$ where $\kappa_1\ge \kappa_0$
is introduced in Lemma~\ref{lem2.3}.
Remember as well that 
the constants $C_1,C_2(\kappa)$ are given respectively in
Assumption~\ref{assumption1} (see (\ref{1.4})) and Lemma~\ref{lem2.3}
(see \eqref{B.21}).  By introducing $C(\kappa)\geq \max
(C_{1},C_{2}(\kappa))$\,, which will be fixed later, we  set
for each $\kappa,$ 
\begin{align*}I(\kappa)=\Big\lbrace j\in \mathbb{N},\; \text{such that}\;\; \mathrm{supp}\;\chi_j\subset\left\{q\in\mathbb{R}^d,\;\;|q|\ge C(\kappa)\right\}\Big\rbrace~.\end{align*}
The rest of the proof is divided into three steps. The first one is
devoted to the control of the terms in the the left hand side of
(\ref{3.666M}) for which $j\in I(\kappa)$ for some large $\kappa\ge
\kappa_0$ to be chosen. At the end of the first step the constants
$\kappa>\kappa_{1}$ and $C(\kappa)\geq \max (C_{1},C_{2}(\kappa))$
will be fixed.
 So on, the second step is concerned with
 the remaining terms for which the support of the cutoff functions
 $\chi_j$ are included in some closed ball $B(0,C'(\kappa))$. We
 finally  sum up all the terms and refer to Lemma~\ref{lem2.2}
 after some elementary optimization trick in the last step.
\smallskip

\noindent\textbf{Step 1, $\mathbf{j\in I(\kappa)}$, $\mathbf{\kappa\geq
  \kappa_{1}}$ to be fixed:} 
As proved in \cite{BNV}, there is a constant $c>0$ such that for all $j\in I(\kappa)$
\begin{align}
\|K_{V_{j}^2}u_j\|^2+A_{V_{j}^2}\|u_j\|^2\ge c\Big(\|O_pu_j\|^2+\|\langle\partial_q V_{j}^2(q)\rangle^{2/3}u_j\|^2+\|\langle D_q\rangle^{2/3}u_j\|\Big)~,\label{3.6M}
\end{align}
where \begin{align*}
 A_{V_{j}^2} &=\max\{(1+\mathrm{Tr}_{+,V_{j}^2})^{2/3}, 1+\mathrm{Tr}_{-,V_{j}^2}\}\\&=\max\{(1+\mathrm{Tr}_{+,V}(q'_j))^{2/3}, 1+\mathrm{Tr}_{-,V}(q'_j)\}~. 
\end{align*}
Hence there is a constant $C>0$ so that 
\begin{align}
\|K_{V_{j}^2}u_j\|^2+(1+10C)t_j^4\|u_j\|^2\ge C\Big(\|O_pu_j\|^2&+\|\langle\partial_q V_{j}^2(q)\rangle^{2/3}u_j\|^2\nonumber\\&+\|\langle D_q\rangle^{2/3}u_j\|+10Ct_j^4\|u_j\|^2\Big)\label{2.6}\;,
\end{align}
where we use the notation $t_j=2\langle \mathrm{Hess}\;V(q'_j)\rangle^{1/4}$ in the whole of the proof.

Recall that as mentioned in \cite{BNV}, the constant $c$ in (\ref{3.6M}) does not depend on the polynomial $V_{j}^2$ and then so is the constant $C$ in (\ref{2.6}).
\newline
Now for all index $j\in I(\kappa)$ we distinguish two cases: either $j\in J(\kappa)$ or $j\not\in J(\kappa).$
\smallskip

\noindent\textbf{Case 1.}  Assume $j\in J(\kappa)$. Then taking into account the inequality (\ref{B.20}) in Lemma~\ref{lem2.3} and using the estimate (\ref{2.6}) we obtain
\begin{align}
\|K_{V_j^2}u_j\|^2+(1+10C)t_j^4\|u_j\|^2\ge C\Big(\|O_pu_j\|^2&+\|\langle\partial_q V(q)\rangle^{2/3}u_j\|^2\nonumber\\&+\|\langle  D_q\rangle^{2/3}u_j\|+10t_j^4\|u_j\|^2\Big)~,\label{3.111111}
\end{align}
Furthermore, since for all index $j\in\mathbb{N}$ the quantity  $R_{V}^{^{\ge2}}(q'_j)^2$ is always greater than $|\mathrm{Hess}\;V(q_j')|,$ there exists a constant $c_{d}>0$ so that for every $j\in J(\kappa),$
\begin{align*}
t_j^4=16\langle\mathrm{Hess}\;V(q_j')\rangle\le c_{d}\langle R_{V}^{^{\ge2}}(q'_j)^2\rangle~.
\end{align*}
Using the fact that the metric $R_{V}^{^{\ge2}}(q)\,dq^2$ is $R_{V}^{^{\ge3}}(q)\,dq^2$ slow (see Definition (\ref{def}) and Lemma\;\ref{A.5}), it follows 
\begin{align*}
t_j^4\le c_{d}\langle R_{V}^{^{\ge2}}(q)^2\rangle\;,
\end{align*}
for every $q\in \mathrm{supp}\; \chi_j.$
Hence there is a constant $c_d'>0$ (depending on the dimension $d$) such that
\begin{align*}
t_j^4&\le c_{d}\langle\Big(\sum\limits_{|\alpha|=2}|\partial_q^{\alpha}V(q)|^{\frac{1}{|\alpha|}}+R_{V}^{^{\ge3}}(q)\Big)^2\rangle\nonumber\\&\le 3c_{d}\langle(\sum\limits_{|\alpha|=2}|\partial_q^{\alpha}V(q)|^{\frac{1}{|\alpha|}})^2+R_{V}^{^{\ge3}}(q)^2\rangle\\&\le c'_d\langle|\mathrm{Hess}\;V(q)|+R_{V}^{^{\ge3}}(q)^2\rangle~,
\end{align*}
holds for any $q\in \mathrm{supp}\; \chi_j.$
Or since for every $q\in\mathbb{R}^d$ on has $R_{V}^{^{\ge3}}(q)\ge R_{V}^{=r}(0)~,$ we derive from the previous estimate that for any $q\in\mathrm{supp}\;\chi_j,$
\begin{align}
t_j^4&\le c'_d\langle |\mathrm{Hess}\;V(q)|+\frac{R_{V}^{^{\ge3}}(q)^4}{R_{V}^{=r}(0)^2}\rangle\nonumber\\&\le \frac{c"_d}{\kappa}\max(1,R_V^{=r}(0)^{-2})\langle\partial_qV(q)\rangle^{\frac{4}{3}}~.\label{3.9M}
\end{align}
Collecting the estimates (\ref{3.111111}) and (\ref{3.9M}), we get for  $\kappa\ge \kappa_1$  
\begin{align*}
\|K_{V_{j}^2}u_j\|^2+(1+10C) \frac{c"_d}{\kappa}&\max(1,R_V^{=r}(0)^{-2})\|\langle\partial_qV(q)\rangle^{\frac{2}{3}}u_j\|^2\\&\ge C\Big(\|O_pu_j\|^2+\|\langle \partial_q V(q)\rangle^{2/3}u_j\|^2+\|\langle D_q\rangle^{2/3}u_j\|^2+10t_j^4\|u_j\|^2\Big)~.
\end{align*}
Choosing $\kappa_2\ge\kappa_1$ so that $$\frac{C}{2}\ge(1+10C) \frac{c"_d}{\kappa_2}\max(1,R_V^{=r}(0)^{-2})~,$$
the following inequality
\begin{align}
\|K_{V_{j}^2}u_j\|^2\ge C\Big(\|O_pu_j\|^2+\frac{1}{2}\|\langle \partial_q V(q)\rangle^{2/3}u_j\|^2+\|\langle D_q\rangle^{2/3}u_j\|^2+10t_j^4\|u_j\|^2\Big)~,\label{3.13M}
\end{align}
holds for all $j\in J(\kappa)$ with $\kappa\ge \kappa_2.$

Or since $j\in J(\kappa),$ there is a constant $c_1>0$ so that 
\begin{align}
\frac{1}{8}\langle\partial_qV(q)\rangle^{\frac{4}{3}}\ge c_1\langle\mathrm{Hess}\;V(q)\rangle~,\label{3.111M}
\end{align}
holds for all $q\in\mathrm{supp}\;\chi_j.$ In addition, using the equivalence (\ref{A.55}) it follows
\begin{align}
\frac{1}{8}\langle\partial_qV(q)\rangle^{\frac{4}{3}}\ge c_2|\partial_qV(q)|^{\frac43}\ge c_2\kappa\,R_{V}^{^{\ge3}}(q)^4\ge c'_2\kappa\,R_{V}^{^{\ge3}}(q_j')^4~,\label{3.14M}
\end{align}
for any $q\in\mathrm{supp}\;\chi_j.$ 

Putting (\ref{3.13M}), (\ref{3.111M}) and (\ref{3.14M}) together,
\begin{align}
\|K_{V_{j}^2}u_j\|^2\ge C\Big(\|O_pu_j&\|^2+\frac{1}{4}\|\langle \partial_q V(q)\rangle^{2/3}u_j\|^2+\|\langle D_q\rangle^{2/3}u_j\|^2\nonumber\\&+ c_1\|\langle\mathrm{Hess}\;V(q)\rangle^{1/2}u_j\|^2+c'_2\kappa\, R_{V}^{^{\ge3}}(q_j')^4\|u_j\|^2+10\|t_j^2u_j\|^2\Big)~,
\end{align}
holds for all $\kappa\ge \kappa_2.$ 

\smallskip

\noindent\textbf{Case 2.} Assume $j\notin J(\kappa)$, with $\kappa\ge\kappa_2\ge\kappa_1\ge \kappa_0.$ Hence by Assumption~\ref{assumption1} (see (\ref{1.4})),  one has $$\mathrm{Tr}_{-,V}(q)\not=0\;\;\text{for all}\;\;q\in(\mathrm{supp}\;\chi_j)\cap(\mathbb{R}^d\setminus\Sigma(\kappa))\;\;\text{such that}\;\;|q|\ge C_1~.$$ In particular, since $|q_j'|\ge C(\kappa)\ge C_1,$ $$\mathrm{Tr}_{-,V_{j}^2}=\mathrm{Tr}_{-,V}(q_j')\not=0~.$$ Then referring again to \cite{BNV},
\begin{align*}
\|K_{V_{j}^2}u_j\|^2\ge c\,B_{V_{j}^2}\|u_j\|^2~,
\end{align*}
where \begin{align*}B_{V_{j}^2}&=\max
\Big(\min\limits_{q\in\mathbb{R}^d}|\nabla V_{j}^2(q)|^{4/3},\frac{1+\mathrm{Tr}_{-,V_{j}^2}}{\log(2+\mathrm{Tr}_{-,V_{j}^2})^2}\Big)
\\&=\max
\Big(\min\limits_{q\in\mathbb{R}^d}|\nabla V_{j}^2(q)|^{4/3},\frac{1+\mathrm{Tr}_{-,V}(q_j')}{\log(2+\mathrm{Tr}_{-,V}(q_j'))^2}\Big)\not=0~.\end{align*}
Hence we get in particular
\begin{align}
\|K_{V_{j}^2}u_j\|^2\ge \frac{1+\mathrm{Tr}_{-,V}(q'_j)}{\log(2+\mathrm{Tr}_{-,V}(q'_j))^2}\|u_j\|^2~.\label{3.155M}
\end{align}
Using again the condition (\ref{1.4}) in Assumption~\ref{assumption1}, there is a constant $C_1\ge1$ so that 
\begin{align*}
\frac{1}{2}\mathrm{Tr}_{-,V}(q'_j)\ge \frac{1}{2C_1}\mathrm{Tr}_{+,V}(q'_j)~,
\end{align*}
holds, which in turn implies
\begin{align}
\mathrm{Tr}_{-,V}(q'_j)\ge \frac{1}{2}\mathrm{Tr}_{-,V}(q'_j)+\frac{1}{2C_1}\mathrm{Tr}_{+,V}(q'_j)\ge \frac{1}{2C_1}(\mathrm{Tr}_{-,V}(q'_j)+\mathrm{Tr}_{+,V}(q'_j)) ~,\label{3.15M}
\end{align}
Then it follows from (\ref{3.155M}) and (\ref{3.15M})
\begin{align}
\|K_{V_{j}^2}u_j\|^2\ge c'\|\frac{\sqrt{1+|\mathrm{Hess}\;V(q'_j)|}}{\log(2+|\mathrm{Hess}\;V(q'_j)|)}u_j\|^2~.\label{3.100}
\end{align}
By Assumption~\ref{assumption1} (see  condition (\ref{1.5})) and
(\ref{3.100}), applying Lemma~\ref{lem2.1}, there is $\delta\in (0,1)$
and  $\Lambda_{\Sigma}(\varrho)$\,, $\lim_{\varrho\to
  +\infty}\Lambda_{\Sigma(\kappa)}(\varrho)=+\infty$ such that 
\begin{align*}
\frac{1+|\mathrm{Hess}\;V(q'_j)|}{\log(2+|\mathrm{Hess}\;V(q'_j)|)^2}&\ge\frac{1}{2^{2(1-\delta)}}(1+|\mathrm{Hess}\;V(q'_j)|)^{1-\delta}\nonumber\\
&\ge
 \frac{1}{4}|\mathrm{Hess}\;V(q'_j)|^{1-\delta}\\
&\ge
 \frac{\Lambda_{\Sigma(\kappa)}(|q_{j}'|)}{4}R_V^{^{\ge3}}(q'_j)^4
\geq \frac{\Lambda_{\Sigma(\kappa)}(C(\kappa))}{4}R_{V}^{^{\geq 3}}(q_{j}')^{4}\,.
\end{align*}
Therefore we get from the above inequality and (\ref{3.100}),
\begin{align}
\|K_{V_{j}^2}u_j\|^2\ge \Lambda_{\Sigma(\kappa)}(C(\kappa))R_{V}^{^{\ge3}}(q_j')^4\|u_j\|^2~.\label{3.110}
\end{align}
Next, remind that $t_j=2\langle \mathrm{Hess}\;V(q'_j)\rangle^{1/4}.$ By (\ref{B.21}) in Lemma~\ref{lem2.3}, the equivalence
\begin{align}t_j\asymp 2\langle \mathrm{Hess}\;V(q)\rangle^{1/4}~,\label{3.18M}
\end{align}
holds for any $q\in\mathrm{supp}\;\chi_j$ with $\left|q\right|\ge C(\kappa)\ge C_2(\kappa).$ From (\ref{2.6}) and (\ref{3.18M}) we see that
\begin{align}
\|K_{V_{j}^2}u_j\|^2+(1+10C)t_j^4\|u_j\|^2\ge C\Big(\|O_pu_j\|^2&+\|\langle\mathrm{Hess\;}V(q)\rangle^{1/2}u_j\|^2\nonumber\\&+\|\langle D_q\rangle^{2/3}u_j\|+9Ct_j^4\|u_j\|^2\Big)\;,\label{3.19M}
\end{align}
Since $j\notin J(\kappa)$
\begin{align}|\mathrm{Hess}\;V(q)|+R_{V}^{^{\ge3}}(q)^4+1\ge \frac{1}{\kappa}|\nabla V(q)|^{\frac43}\;,\label{3.20m}\end{align}
for all $q\in\mathrm{supp}\;\chi_j.$
Furthermore, it results by Lemma~\ref{lem2.1} that for all $q\in\mathrm{supp}\;\chi_j$ (where $j\notin J(\kappa)$) 
\begin{align}
|\mathrm{Hess}\;V(q)|+R_{V}^{^{\ge3}}(q)^4+1\le \frac{3}{2}|\mathrm{Hess}\;V(q)|~.\label{3.21m}
\end{align}
From (\ref{3.20m}) and (\ref{3.21m}) we get
\begin{align*}
|\mathrm{Hess}\;V(q)|\ge\frac{1}{2\kappa}|\nabla V(q)|^{\frac43}
\quad,\quad |\mathrm{Hess}\;V(q)|\ge \frac{2}{3}\geq \frac{1}{2\kappa}~.
\end{align*}
Hence there exists a constant $c''>0$ such that 
\begin{align*}\langle\mathrm{Hess}\;V(q)\rangle
\ge\frac{c''}{\kappa}\langle \partial_q V(q)\rangle^{4/3}~,\end{align*}
for any $q\in \mathrm{supp}\;\chi_j$  with $\left|q\right|\ge C(\kappa)\ge C_2(\kappa).$

The above inequality combined  with (\ref{3.19M})  leads to
\begin{align}\|K_{V_{j}^2}u_j\|^2+(1+10C) t_j^4\|u_j\|^2\ge C\Big(&\|O_pu_j\|^2+\|\langle D_q\rangle^{2/3}u_j\|^2+9t_j^4\|u_j\|^2\nonumber\\&+\frac{c''}{\kappa}\|\langle \partial_q V(q)\rangle^{2/3} u_j\|^2+\frac{1}{2}\|\langle\mathrm{Hess}\;V(q)\rangle^{1/2}u_j\|^2\Big)~,\label{3.130}
\end{align}
for all $\kappa\ge\kappa_2$ 

Collecting the estimates (\ref{3.130}) and (\ref{3.100}) we get
\begin{align}\log(t_j^4)^2\|K_{V_{j}^2}u_j\|^2\ge C"\Big(\|O_pu_j\|^2&+\|\langle D_q\rangle^{2/3}u_j\|^2+9t_j^4\|u_j\|^2\nonumber\\&+\frac{c''}{\kappa}\|\langle \partial_q V(q)\rangle^{2/3} u_j\|^2+\frac{1}{2}\|\langle\mathrm{Hess}\;V(q)\rangle^{1/2}u_j\|^2\Big)~.\label{3.2122M}
\end{align}
In order to reduce the written expressions we denote 
\begin{align*}
\Lambda_{1,j}&=\frac{O_p}{\log(t_j^4)}~,\quad\Lambda_{2,j}=\frac{\langle\mathrm{Hess}\;V(q)\rangle^{1/2}}{\log(t_j^4)}~,\quad\Lambda_{3,j}=\frac{\langle \partial_q V(q)\rangle^{2/3}}{\log(t_j^4)}~,\quad\Lambda_{4,j}=\frac{t_j^2}{\log(t_j^4)}~.
\end{align*}
The estimate (\ref{3.2122M}) can be rewritten as follows
\begin{align}\|K_{V_{j}^2}u_j\|^2\ge C"\Big(\|\Lambda_{1,j}u_j\|^2+\frac{1}{2}\|\Lambda_{2,j}u_j\|^2+\frac{c''}{2\kappa}\|\Lambda_{3,j}u_j\|^2+9\|\Lambda_{4,j}u_j\|^2+\|\frac{\langle D_q\rangle^{2/3}}{\log(t_j^4)}u_j\|^2\Big)~.\label{3.140}
\end{align}
Using (\ref{3.140}) and (\ref{3.110}) we obtain
\begin{align*}(1+C")\|K_{V_{j}^2}u_j\|^2\ge C"\Big(\|\Lambda_{1,j}u_j\|^2&+\frac{1}{2}\|\Lambda_{2,j}u_j\|^2+\frac{c''}{2\kappa}\|\Lambda_{3,j}u_j\|^2+9\|\Lambda_{4,j}u_j\|^2\\&+\|\frac{\langle D_q\rangle^{2/3}}{\log(t_j^4)}u_j\|^2+\Lambda_{\Sigma(\kappa)}(C(\kappa))R_{V}^{^{\ge3}}(q_j')^4\|u_j\|^2\Big)~.
\end{align*}
Therefore in both cases, that is for all $j\in I(\kappa)$ where $\kappa\ge \kappa_2$
\begin{align}\|K_{V_{j}^2}u_j\|^2\ge C^{(3)}\Big(\|\Lambda_{1,j}u_j\|^2&+\|\Lambda_{2,j}u_j\|^2+\frac{1}{\kappa}\|\Lambda_{3,j}u_j\|^2+\|\Lambda_{4,j}u_j\|^2\nonumber\\&+\|\frac{\langle D_q\rangle^{2/3}}{\log(t_j^4)}u_j\|^2+\min\Big(\kappa,\Lambda_{\Sigma(\kappa)}(C(\kappa))\Big)R_{V}^{^{\ge 3}}(q'_j)^4\|u_j\|^2\Big)~.\label{3.160}
\end{align}
Due to the elementary inequality $u^{4/3}+v^4\ge \frac{1}{c_{0}}(u^2+v^4)^{2/3}$ satisfied for all $u,v\ge 1$, we obtain for all $\kappa\ge \kappa_2$
\begin{align}
\|\frac{\langle D_q\rangle^{2/3}}{\log(t_j^4)}u_j\|^2+\frac{1}{2}\min\Big(\kappa,\Lambda_{\Sigma(\kappa)}(C(\kappa))\Big)R_{V}^{^{\ge 3}}(q'_j)^4\|u_j\|^2\ge \frac{1}{c_{0}}\|\Lambda_{5,j}u_j\|^2~,\label{3.233M}\end{align}
where \begin{align*}
\Lambda_{5,j}=\frac{(1+D_q^2+R_{V}^{^{\ge3}}(q'_j)^4)^{\frac{1}{3}}}{\log(t_j^4)}~.
\end{align*}
In conclusion, we get by (\ref{3.160}) and (\ref{3.233M}) for every $j\in I(\kappa)$ with $\kappa\ge \kappa_2$
\begin{align*}\|K_{V_{j}^2}u_j\|^2\ge C^{(3)}\Big(\|\Lambda_{1,j}u_j\|^2+\|\Lambda_{2,j}u_j\|^2&+\frac{1}{\kappa}\|\Lambda_{3,j}u_j\|^2+\|\Lambda_{4,j}u_j\|^2\nonumber\\& +\frac{1}{c_{0}}\|\Lambda_{5,j}u_j\|^2+\frac{1}{2}\min\Big(\kappa,\Lambda_{\Sigma(\kappa)}(C(\kappa))\Big)R_{V}^{^{\ge 3}}(q'_j)^4\|u_j\|^2\Big)~.
\end{align*}
We now fix the choice firstly of $C(\kappa)$ and secondly of
$\kappa$\,.
Because $\lim_{\varrho\to
  +\infty}\Lambda_{\Sigma(\kappa)}(\varrho)=+\infty$\,, we can choose
for any $\kappa\geq \kappa_{2}$\,,
$C(\kappa)\geq \max (C_{1},C_{2}(\kappa))$ such that
$\Lambda_{\Sigma(\kappa)}(C(\kappa))\geq \kappa$\,. 
We then choose $\kappa=\kappa_{3}\geq \kappa_{2} $ such that
$$
\frac{C^{(3)}}{8}\min\Big(\kappa_3,\Lambda_{\Sigma(\kappa_{3})}(\kappa_3)\Big)=
\frac{C^{(3)}\kappa_{3}}{8}\ge (c'_{d,r})^2~,$$
where  $c'_{d,r}$ is the constant in (\ref{3.666M}),
\begin{align}
\sum\limits_{j\in I(\kappa_3)}\frac{1}{4}\|K_{V_{j}^2}u\|^2-(c'_{d,r})^2R_{V}^{^{\ge 3}}(q'_j)^4\|u_j\|^2\ge \frac{C^{(3)}}{8}&\sum \limits_{j\in I(\kappa_3)}\Big(\|\Lambda_{1,j}u_j\|^2+\|\Lambda_{2,j}u_j\|^2\nonumber\\&+\frac{1}{\kappa_3}\|\Lambda_{3,j}u_j\|^2+\|\Lambda_{4,j}u_j\|^2 +\frac{1}{c_{0}}\|\Lambda_{5,j}u_j\|^2\Big)~.\label{3.227M}
\end{align}
\smallskip

\noindent\textbf{Step 2, $\mathbf{j\not\in I(\kappa_{3})}$:}
The set  $\mathbb{N}\setminus I(\kappa_{3})$ is  now a fixed finite  set and
we can define
\begin{multline*}
   C^{(4)}=\max_{j\in \mathbb{N}\setminus
   I(\kappa_{3})}
\left[A_{V_{j}^{2}}+\sup_{q\in \mathrm{supp}\,\chi_{j}}\Big(\langle
\mathrm{Hess}~V(q)\rangle+\langle \partial_{q}V(q)\rangle^{4/3}\Big)
\right.
\\
\left.
+\frac{t_{j}^{4}}{\log(t_{j}^{4})^{2}}+(1+(c_{d,r}')^{2})(1+R_{V}^{\geq
3}(q_{j}'))^{4}\right]\,.
\end{multline*}
From the lower bound \eqref{eq4} we deduce
\begin{multline*}
\frac{1}{4}\|K_{V_{j}^{2}}u_{j}\|+C^{(4)}\|u_{j}\|^{2}-(c_{d,r}')^{2}R_{V}^{^{\geq
3}}(q_{j}')^{4}\|u_{j}\|^{2}
\geq\frac{c}{4}\left[\|O_pu_{j}\|^{2}+\|\langle
  D_{q}\rangle^{2/3}u_{j}\|^{2}\right] +(1+R_{V}^{^{\geq
3}}(q_{j}'))^{4}\|u_{j}\|^{2} \\
+\|\langle \partial_{q}V(q)\rangle^{2/3}u_{j}\|^{2}
+\|\langle 
\mathrm{Hess}~V(q)\rangle^{1/2}u_{j}\|^{2}
+\|\frac{t_{j}^{2}}{\log(t_{j}^{4})}u_{j}\|^{2}
\end{multline*}
With the quantities 
\begin{align*}
\Lambda_{1,j}=\frac{O_p}{\log(t_j^4)}~,&\quad\Lambda_{2,j}=\frac{\langle\mathrm{Hess}\;V(q)\rangle^{1/2}}{\log(t_j^4)}~,\quad\Lambda_{3,j}=\frac{\langle \partial_q V(q)\rangle^{\frac23}}{\log(t_j^4)}~,\\&\quad\Lambda_{4,j}=\frac{t_j^2}{\log(t_j^4)}\;,\quad\Lambda_{5,j}=\frac{(1+D_q^2+R_{V}^{^{\ge3}}(q'_j)^4)^{\frac{1}{3}}}{\log(t_j^4)}~.
\end{align*}
where $t_{j}\geq 2$ we deduce
\begin{multline}
 \sum\limits_{j\not\in
  I(\kappa_3)}\frac{1}{4}\|K_{V_{j}^2}u_j\|^2-(c'_{d,r})^2R_{V}^{^{\ge
  3}}(q'_j)^4\|u_j\|^2+C^{(4)}\|u_j\|^2
\geq
\\
C^{(5)}\sum\limits_{j\not\in
  I(\kappa_3)}\Big(\|\Lambda_{1,j}u_j\|^2+\|\Lambda_{2,j}u_j\|^2+\frac{1}{\kappa_3}\|\Lambda_{3,j}u_j\|^2+\|\Lambda_{4,j}u_j\|^2
+\frac{1}{c_{0}}\|\Lambda_{5,j}u_j\|^2\Big)
,\label{3.229M}
\end{multline}
Collecting (\ref{3.666M}), (\ref{3.227M}) and (\ref{3.229M}), there
exists a positive constant $C^{(6)}\geq 1$ depending on $V$ such that
\begin{align}
\|K_{V}u\|^2_{L^2}+C^{(6)}\|u\|^2_{L^2}\ge \frac{1}{C^{(6)}}\sum\limits_{j\in \mathbb{N}}\Big(\|\Lambda_{1,j}u_j\|^2+\|\Lambda_{2,j}u_j\|^2&+\|\Lambda_{3,j}u_j\|^2\nonumber\\&+\|\Lambda_{4,j}u_j\|^2 +\|\Lambda_{5,j}u_j\|^2\Big)~.
\end{align}
\smallskip

\noindent\textbf{Step 3.} In this final step, set $L(s)=\frac{s+1}{\log(s+1)}$ for all $s\ge 1.$ Notice that there exists a constant $c>0$ such that for all $x\ge 1$,
\begin{align*}
\inf\limits_{t\ge 2}\frac{x}{\log(t)}+t\ge \frac{1}{c}L(x)~.
\end{align*}
In view of the above estimate,
\begin{align*}
\|\Lambda_{1,j}u_j\|^2+\frac{1}{4}\|\Lambda_{4,j}u_j\|^2&\ge\frac{1}{4}\int \frac{\lambda^2}{(\log(t_j^4))^2}+t_j^2\;d\mu_{u_j}(\lambda)\\&\ge \frac{1}{8}\int(\frac{\lambda}{\log(t_j)}+t_j)^2d\mu_{u_j}(\lambda)\\&\ge\frac{1}{c_{3}}\|L(O_p)u_j\|^2~.
\end{align*}
Summing over $j$, we obtain the first term in the desired estimation $(\ref{1.6}).$ Likewise for the second term
 \begin{align*}
 \|\Lambda_{3,j}u_j\|^2+\frac{1}{4}\|\Lambda_{4,j}u_j\|^2\ge\frac{1}{c_{4}}\|L(\langle \partial_q V(q)\rangle^{2/3})u_j\|^2~,
 \end{align*}
 with \begin{align*}\sum\limits_{j\in\mathbb{N}}\|L(\langle
        \partial_q V(q)\rangle^{2/3})u_j\|^2=\|L(\langle \partial_q V(q)\rangle^{2/3})u\|^2~.\end{align*}
 To obtain the third term in $(\ref{1.6})$ write samely
 \begin{align*}
 \|\Lambda_{2,j}u_j\|^2+\frac{1}{4}\|\Lambda_{4,j}u_j\|^2\ge\frac{1}{c_{5}}\|L(\langle \mathrm{Hess}\;V(q)\rangle^{1/2})u_j\|^2~,
 \end{align*}
 with \begin{align*}\sum\limits_{j\in\mathbb{N}}\|L(\langle\mathrm{Hess}\; V(q)\rangle^{1/2})u_j\|^2=\|L(\langle \mathrm{Hess}\; V(q)\rangle^{1/2})u\|^2~.\end{align*}
Doing similarly again
 \begin{align*}
\|\Lambda_{5,j}u_j\|^2+\frac{1}{4}\|\Lambda_{4,j}u_j\|^2\ge\frac{1}{c_{6}}\|L((1+D_q^2+R_{V}^{^{\ge3}}(q'_j)^4)^{\frac{1}{3}})u_j\|^2~.
\end{align*}
By Lemma~\ref{lem2.2} we get \begin{align*}\sum\limits_{j\in\mathbb{N}}\|L((1+D_q^2+R_{V}^{^{\ge3}}(q'_j)^4)^{\frac{1}{3}})u_j\|^2\ge \frac{1}{c_{6}}\|L((1+D_q^2+R_{V}^{^{\ge3}}(q)^4)^{\frac{1}{3}})u\|^2~,\end{align*}
To conclude, just remark that 
\begin{align*}
\langle u,(1+D_q^2+R^{^{\ge3}}_{V}(q)^4)u\rangle\ge \langle
  u,(1+D_q^2)u\rangle\ge \langle u,\langle D_q^{2}\rangle u\rangle > \|u\|^2
\end{align*}
holds for all $u\in\mathcal{C}_0^{\infty}(\mathbb{R}^{2d}),$ then applying (\ref{2.144M}) in Lemma~\ref{lem2.3M} with $A=(1+D_q^2+R^{^{\ge3}}_{V}(q)^4)$, $B=\langle D_q^{2}\rangle,$ $\alpha_0=\frac{2}{3}$ and $k=2$ we obtain
\begin{align*}
\|L((1+D_q^2+R_{V}^{^{\ge3}}(q)^4)^{\frac{1}{3}})u\|^2\ge
  \|L(\langle D_q^{2}\rangle^{\frac{1}{3}})u\|^2\geq\frac{1}{c_{7}}\|L(\langle
  D_{q}\rangle^{2/3})u\|^{3}
\end{align*}
for all $u\in\mathcal{C}_0^{\infty}(\mathbb{R}^{2d}).$
 
Finally collecting all terms, we have found $C_{V}\geq 1$ such that
\begin{align}
\|K_{V}u\|^2_{L^2}+C_{V}\|u\|^2_{L^2}\ge \frac{1}{C_{V}}\Big(\|L(O_p)u\|^2_{L^2}&+\|L(\langle\nabla V(q)\rangle^{\frac{2}{3}})  u\|^2_{L^2}\nonumber\\&+\|L(\langle\mathrm{Hess}\; V(q)\rangle^{\frac{1}{2}} ) u\|^2_{L^2}+\|L(\langle D_q\rangle^{\frac{2}{3}} ) u\|^2_{L^2}\Big)
\end{align}
holds for all $u\in\mathcal{C}_0^{\infty}(\mathbb{R}^{2d})$\,. Because
${\cal C}^{\infty}_{0}(\mathbb{R}^{2d})$ is dense in $D(K_{V})$
endowed with the graph norm, the result extends to any $u\in D(K_{V})$\,.
\end{proof}
 \section{Applications}
This section is devoted to some applications of Theorem~\ref{thm1.1}. In each of the following examples we examine that the Assumption~\ref{assumption1} is well fulfilled.\\

\noindent\textbf{Example 1:} 
Let us consider as a first example of application the case \begin{align*}V(q_1,q_2)=-q_1^2q_2^2,\;\text{with}\; q=(q_1,q_2)\in\mathbb{R}^2~,\end{align*}
By direct computation 
\begin{align*}\partial_qV(q)=\begin{pmatrix}
-2q_1q_2^2\\-2q_2q_1^2
\end{pmatrix}\;,~ |\partial_qV(q)|=2|q_1q_2||q|~,\end{align*}

\begin{align*}\mathrm{Hess}\;V(q)=\begin{pmatrix}
-2q_2^2&-4q_1q_2\\-4q_1q_2&-2q_1^2
\end{pmatrix}\;,\;|\mathrm{Hess}\;V(q)|=2\sqrt{|q|^4+6q_1^2q_2^2}\asymp |q|^2~,\end{align*}

\begin{align*}R_{V}^{^{\ge 3}}(q)=|4q_2|^{1/3}+|4q_1|^{1/3}+2\times4^{1/4}~.\end{align*}
It is clear that the trace of $\mathrm{Hess}\;V(q)$ given by $-2|q|^2$ is stricly negative for all $q\in\mathbb{R}^2.$ Hence 
\begin{align*}
\mathrm{Tr}_{-,V}(q)\ge \mathrm{Tr}_{+,V}(q)\;\;\text{ for all }\;q\in\mathbb{R}^2~.
\end{align*}
Moreover, for all $\kappa>0$ the algebraic set $\mathbb{R}^2\setminus \Sigma(\kappa)$ is not bounded since $(0,q_2)\in \mathbb{R}^2\setminus \Sigma(\kappa)$ for all $q_2\in \mathbb{R}.$ Furthermore for $\kappa>1$ chosen as we want
\begin{align*}
\lim\limits_{\substack{q\to\infty \\ q\in \mathbb{R}^2\setminus \Sigma(\kappa)}}\frac{R_{V}^{^{\ge 3}}(q)^4}{|\mathrm{Hess}\;V(q)|}=\lim\limits_{\substack{q\to\infty \\ q\in \mathbb{R}^2\setminus \Sigma(\kappa)}}\frac{|q|^{4/3}}{|q|^{2}}=0~,
\end{align*}
since $R_{V}^{^{\ge 3}}(q)^4\le |q|^{4/3}$ when $|q|\ge 2^3\times4^{3/4}.$

Below we sketch as example $\Sigma(800)$ in a blue color. 
\begin{figure}[H]
	\centering
\includegraphics[scale=0.3]{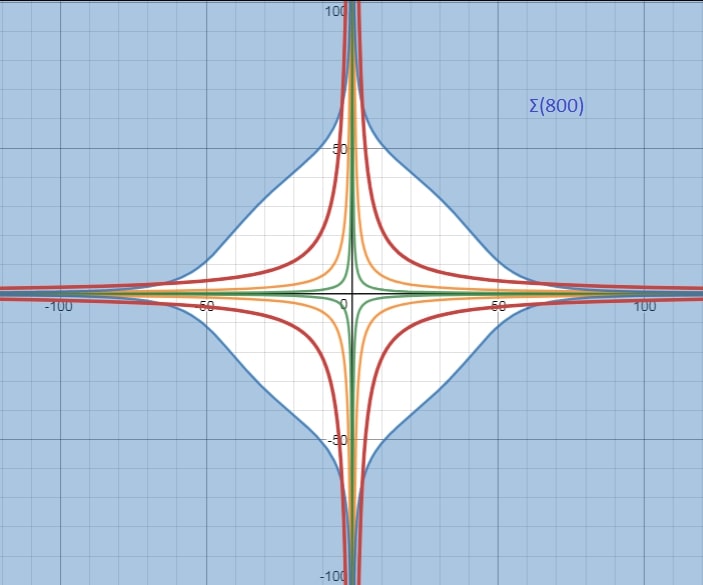}
	\caption{Contour lines of $V(q_1,q_2)=-q_1^2q_2^2$}
	\label{fig:n}
\end{figure}

\noindent\textbf{Example 2:} Let $n\in \mathbb{N}.$ The polynomial $V(q)=-q_1^2(q_1^2+q_2^2)^n$ verifies the Assumption~\ref{assumption1} only for $n=1.$
\\
A straight forward computation shows that
\begin{align*}\partial_qV(q)=-\begin{pmatrix}
2q_1(|q|^{2n}+nq_1^2|q|^{2(n-1)})\\2nq_2q_1^2|q|^{2(n-1)}
\end{pmatrix}~,\end{align*}
\begin{align*}\mathrm{Hess}\;V(q)=-2|q|^{2(n-2)}\begin{pmatrix}
|q|^{4}+5nq_1^2|q|^{2}+2n(n-1)q_1^4&     2nq_1q_2|q|^{2}+2n(n-1)q_1^3q_2 \\2nq_1q_2|q|^{2}+2n(n-1)q_1^3q_2&nq_1^2|q|^{2}+2n(n-1)q_1^2q_2^2
\end{pmatrix}~.\end{align*}
Notice that the trace of $\mathrm{Hess}\;V(q)$ equals \begin{align*}-2|q|^{2(n-2)}\Big(|q|^{4}+5nq_1^2|q|^{2}+2n(n-1)q_1^4+nq_1^2|q|^{2}+2n(n-1)q_1^2q_2^2\Big)\le0~,\end{align*}  for all $q\in\mathbb{R}^2.$ Hence \begin{align*}-\mathrm{Tr}_{-,V}(q)+\mathrm{Tr}_{+,V}(q)\le 0,\;\;\text{for any}\;\;q\in \mathbb{R}^2~.\end{align*}
In addition for all $\kappa>0$ the set $\mathbb{R}^2\setminus \Sigma(\kappa)$ is not bounded since $(0,q_2)\in\mathbb{R}^2\setminus \Sigma(\kappa)$ for all $q_2\in \mathbb{R}.$
\\
For $q$ large enough $|\mathrm{Hess}\;V(q)|\asymp |q|^{2n}$ and  $|D^3V(q)|\asymp |q|^{2n-1}$ then $$\frac{R_{V}^{^{\ge 3}}(q)^4}{|\mathrm{Hess}\;V(q)|}\asymp\frac{(|q|^{2n-1})^{4/3}}{|q|^{2n}}~.$$ 
Hence  \begin{align*}
\lim\limits_{\substack{q\to\infty \\ q\in \mathbb{R}^2\setminus \Sigma(\kappa)}}\frac{R_{V}^{^{\ge 3}}(q)^4}{|\mathrm{Hess}\;V(q)|}=0~\;\;\;\text{if and only if}\;\;\;n<2~.\end{align*} 
Taking as example $\kappa=800,$ we get the following shape of $\Sigma(800)$ colored in blue.

\begin{figure}[H]
	\centering
\includegraphics[scale=0.45]{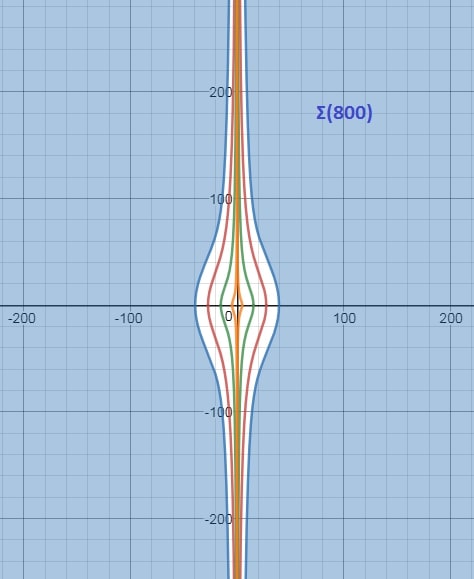}
	\caption{Contour lines of $V(q_1,q_2)=-q_1^2(q_1^2+q_2^2)$}
	\label{fig:n}
\end{figure}

\noindent\textbf{Example 3:} For $\epsilon\in\mathbb{R}\setminus\lbrace0,-1\rbrace,$ we consider $V(q_1,q_2)=(q_1^2-q_2)^2+\epsilon q_2^2.$
For all $q\in\mathbb{R}^2$ one has
\begin{align*}\partial_qV(q)=\begin{pmatrix}
4q_1(q_1^2-q_2)\\-2(q_1^2-q_2)+2\epsilon q_2
\end{pmatrix}~,\;|\partial_qV(q)|=4|q_1(q_1^2-q_2)|+|-2(q_1^2-q_2)+2\epsilon q_2|~,\end{align*}
\begin{align*}\mathrm{Hess}\;V(q)=\begin{pmatrix}
12q_1^2-4q_2&-4q_1\\-4q_1&2(1+\epsilon)
\end{pmatrix}~,\;|\mathrm{Hess}\;V(q)|=|12q_1^2-4q_2|+8|q_1|+4|1+\epsilon|~,\end{align*}
\begin{align*}R_{V}^{^{\ge 3}}(q)=(24|q_1|)^{1/3}+3\times4^{1/3}+24^{1/4}~.\end{align*}
In this case, we are going to show that for all $\kappa>0$ the algebraic set $\mathbb{R}^2\setminus\Sigma(\kappa)$ is bounded. Let $(q_1,q_2)\in\mathbb{R}^2\setminus\Sigma(\kappa)$ then 
\begin{align*} \Big(|\mathrm{Hess}\; V(q)|+R_{V}^{^{\ge 3}}(q)^4+1\Big)\ge\frac{1}{\kappa}|\nabla V(q)|^{\frac{4}{3}}~.\end{align*}
Up to a change of coordinates $X_1=q_1,\;X_2=q_1^2-q_2$ the above inequality is equivalent to
\begin{align*} \Big(4|2X_1^2+X_2|+8|X_1|+4|1+\epsilon|&+\Big((24|X_1|)^{1/3}+3\times4^{1/3}+24^{1/4}\Big)^4+1\Big)\\&\ge \frac{1}{\kappa}\Big(4|X_1X_2|+|-2(1+\epsilon)X_2+2\epsilon X_1^2|\Big)^{\frac{4}{3}}~.\end{align*}
Using the triange inequality in the right hand side and the reverse triangle inequality with the elementary inequality $(u+v)^{\frac{4}{3}}\ge u^{\frac{4}{3}}+v^{\frac{4}{3}}$ satisfied for all $u,v\ge0,$ it follows that 
\begin{align}
|X_1|^2+|X_2|+|X_1|+\Big(|X_1|^{\frac{1}{3}}+c\Big)^4\ge \frac{c'}{\kappa}\Big(\Big||2(1+\epsilon)X_2|-|2\epsilon X_1^2|\Big|^{\frac{4}{3}}+|X_1X_2|^{\frac{4}{3}}\Big)~.\label{4.1M}
\end{align}
Suppose first that $|X_1|\le 1.$ The inequality (\ref{4.1M}) implies 
\begin{align}
|X_2|+c_1\ge \frac{c'}{\kappa}\Big||2(1+\epsilon)X_2|-|2\epsilon X_1^2|\Big|^{\frac{4}{3}}~.\label{4.2M}
\end{align}
The right hand part in the above inequality is upper bounded by $|X_2|+c_1$ where $c_1$ is some positive constant. Now we distinguish two case:
\\
\textbf{Case 1:} If $\frac{1}{2}|2(1+\epsilon)X_2|\le |2\epsilon X_1^2|$ or equivalently $|X_2|\le|\frac{2\epsilon}{1+\epsilon}||X_1^2|$ then $|X_2|\le |\frac{2\epsilon}{1+\epsilon}|~.$ 
\\
\textbf{Case 2:} Else if $\frac{1}{2}|2(1+\epsilon)X_2|\ge |2\epsilon X_1^2|$ then  we get
\begin{align*}|X_2|+c_1\ge \frac{c'}{\kappa}|1+\epsilon||X_2|^{4/3}~.\end{align*}
Using the fact that $\epsilon\not=-1$, we deduce that $X_2$ must be also bounded.

Now if $|X_1|\ge 1,$ we derive from (\ref{4.1M}) the following esimates 
\begin{align}
|X_1|^2+|X_2|+c_3\ge \frac{c_4}{\kappa}\Big||2(1+\epsilon)X_2|-|2\epsilon X_1^2|\Big|^{\frac{4}{3}}~,\label{4.3M}
\end{align}
\begin{align}
|X_1|^2+|X_2|+c_3\ge \frac{c_4}{\kappa}|X_1X_2|^{\frac{4}{3}}~.\label{4.4M}
\end{align}
Here we study three cases. 
\\
$\bullet$ Firstly if  $\frac{1}{2}|2(1+\epsilon)X_2|\ge |2\epsilon X_1^2|$ or equivalently $|X_1|\le|\frac{1+\epsilon}{2\epsilon}||X_2|$ then (\ref{4.3M}) gives
\begin{align*}
(1+|\frac{1+\epsilon}{\epsilon}|)|X_2|+c_3\ge \frac{c_4}{\kappa}|(1+\epsilon)X_2|^{\frac{4}{3}}~.
\end{align*}
Since $\epsilon\not=-1,$ it follows that $X_2$ is bounded and so is $X_1.$
\\
$\bullet$ Now if $2|2(1+\epsilon)X_2|\le |2\epsilon X_1^2|$ or samely $|X_2|\le|\frac{\epsilon}{2(1+\epsilon)}||X_1^2|$ the estimates (\ref{4.3M}) leads to
\begin{align*}
(1+|\frac{\epsilon}{2(1+\epsilon)}|)|X_1|^2+c_3\ge \frac{c_4}{\kappa}|\epsilon X_1|^{\frac{8}{3}}~.
\end{align*}
Since $\epsilon\not=0,$ it follows that $X_1$ is bounded and so is $X_2.$
\\
$\bullet$ Finally if $\frac{1}{2}|2(1+\epsilon)X_2|\le |2\epsilon X_1^2|\le 2|2(1+\epsilon)X_2|,$
then by (\ref{4.4M})
\begin{align*}
(1+|\frac{2\epsilon}{1+\epsilon}|)|X_1|^2+c_3\ge \frac{c_4}{\kappa}\Big(| X_1||\frac{\epsilon}{2(1+\epsilon)}|X_1^2|\Big)^{\frac{4}{3}}~.
\end{align*}
Hence since $\epsilon\not=0,$ $X_1$ is bounded and then $X_2$ is so.

Below we sketch as example $\Sigma(2)$ in a blue color.

\begin{figure}[H]
	\centering
\includegraphics[scale=0.5]{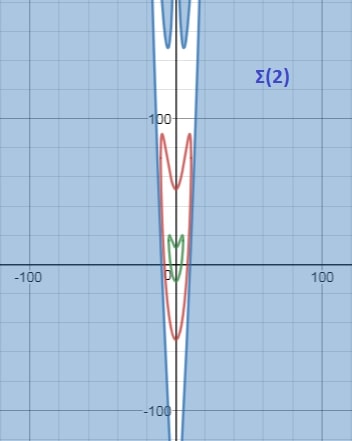}
	\caption{Contour lines of $V(q_1,q_2)=(q_1^2-q_2)^2+0.5 q_2^2~.$}
	\label{fig:n}
\end{figure}
For $\epsilon=0,$ thanks to $\cite{HeNi}$ (see Proposition 10.21 page 111), we know that the Witten Laplacian defined by
$$\Delta_V^{(0)}=-\Delta_{q}+\left|\nabla V(q)\right|^{2}-\Delta V(q)~,\;\;q=(x_1,x_2)\in\mathbb{R}^2$$ has no compact resolvent and then the Kramers-Fokker-Planck operator $K_V$ has no compact resolvent.

This example was studied in the case of the Witten Laplacian operator by B.Helffer and F.Nier in their book \cite{HeNi}. A small mistake was done in \cite{HeNi} in Proposition 10.20. In fact the equations $l_{11} = l_{12}=l_{111}= 0$ should be replaced 
by $(1+ \epsilon)l_{11} =l_{12}=l_{111} = 0.$ When $\epsilon=-1,$ we can eventually construct a Weyl sequence for the Witten Laplacian operator in the following way.
In this case the potential $V(q_1,q_2)=(q_1^2-q_2)^2-q_2^2$ is equal to $-2q_2 q_1^2+q_1^4.$

In order to construct a Weyl sequence for  $\Delta_V^{(0)},$ it is sufficient to take $\chi(\frac{(q_2+n^2)}{n})$ where $\chi$ is a cutoff function supported in $[-1,1]$ and then consider the sequence

$$u_n(q_1,q_2)=\chi(\frac{(q_2+n^2)}{n}) \exp(-V(q_1,q_2))~.$$

The support of $u_n$ is then included in $-n^2-n\leq q_2 \leq -n^2+n.$ Hence the $u_n$'s have disjoint supports for large $n.$

Therefore we have  $$-2n^2\leq q_2\leq -\frac{n^2}{2}\quad \text{and}\quad -4n^2 q_1^2-q_1^4 \leq -V(q_1,q_2)\leq -n^2 q_1^2-q_1^4\leq -n^2 q_1^2~.$$

As a result, we get for $n$ large 
 \begin{align*}\frac{\langle u_n, \Delta_V^{(0)} u_n\rangle}{\|u_n\|^2}& =\frac{\|(\partial_q+\partial_qV(q))(u_n)\|^2}{\|u_n\|^2}\\&=\frac{\|(\partial_q\chi)e^{-V}\|^2}{\|u_n\|^2}=O(\frac{1}{n^2})~.
 \end{align*}

Here to get the lower bound of the  the above quantity we restrict the integral in $q_1=O(\frac{1}{n}).$ As a conclusion, for $\epsilon=-1$ the Witten Laplacian attached to $V(q_1,q_2)=q^2_1q^2_2+ \epsilon(q^2_1+ q^2_2)$ has no compact resolvent and then the Kramers-Fokker-Planck operator $K_V$ has no compact resolvent.

\begin{appendices}

 \section{Slow metric, partition of unity }\label{B}
 The purpose of this appendix is to state with references or proofs the
facts concerning metrics which are needed in the article.
We first remind the following definitions.
\begin{defis}A metric $g$ on $\mathbb{R}^m$ is called a slowly varying metric if there exists a constant $C\ge 1$ such that for all $x,y\in\mathbb{R}^m$ satisfying $g_x(x-y,x-y)\le C^{-1}$ it follows that 
\begin{align}
C^{-1}g_x(z,z)\le g_y(z,z)\le Cg_x(z,z)\label{A.1}
\end{align}
holds for all $z\in\mathbb{R}^m.$

Let $g^1$ and $g^2$ be two metrics. We say that $g^1$ is $g^2$ slow if there is a constant $c\ge 1$ such that for all $x,y\in\mathbb{R}^m$
\begin{align}
g^2_x(x-y,x-y)\le c^{-1}\Rightarrow c^{-1}g^1_x(z,z)\le g^1_y(z,z)\le cg^1_x(z,z)~.\label{def}
\end{align} 
holds for all $z\in\mathbb{R}^m.$
\begin{RQ}
The second statement in the above definitions is a typical application of the notion of the second microlocalisation developed by Bony-lerner (see \cite{BoLe}).
\end{RQ}
\end{defis}
\begin{RQ}
 The property~\ref{A.1} will be satisfied if we ask only that 
 \begin{align}
 \exists C\ge 1, \forall x,y,z\in \mathbb{R}^{m},\;g_x(x-y)\le C^{-1}\Longrightarrow g_y(z)\le Cg_x(z)~.\label{A.2}
\end{align}
Indeed, assuming (\ref{A.2}) gives that wherever $g_x(x-y)\le C^{-1}$ (which is less than or equal to one since $C\ge1$ from (\ref{A.2})  with x=y) this implies $g_y(y-x)\le C^{-1}$ and then\newline $g_x(z)\le Cg_y(z)$, so that (\ref{A.1}) is well satisfied.
 \end{RQ}

 \begin{NTS}
 For $r\in \mathbb{N}$, let $E_r$ denote the set of polynomials with degree not greater than~$r$:
 \begin{align*}
 E_r=\left\{P\in \mathbb{R}[X_1,...,X_d],\; d^{\circ}P\le r\right\}~.
 \end{align*}

 For a polynomial $P\in E_r$ and $n\in\left\{1,...,r\right\}$, the function $R^{^{\ge n}}_P:\mathbb{R}^d\to \mathbb{R}$ is defined by
 \begin{align}
R^{^{\ge n}}_P(q)=\sum\limits_{n\le |\alpha|\le r}|\partial^{\alpha}_qP(q)|^{\frac{1}{|\alpha|}}\;.\label{A.3}
 \end{align}
 \end{NTS}
 
In the present article we are mainly concerned with the metric $g^n=
R_{P}^{^{\ge n}}(q)^2\,dq^2$ where $n\in \left\{1,...,r\right\}$ which satisfies the following properties.
\begin{lem}\label{A.5} Let $n$ a natural number in $\left\{1,...,r\right\}.$
\\
1) The metric $g^n$ is slow: There exists a uniform $C=C(n,r,d)\ge 1$ such that 
\begin{align}
R_{P}^{^{\ge n}}(q)|q-q'|\le C^{-1}\Longrightarrow \Big(\frac{R_{P}^{^{\ge n}}(q)}{R_{P}^{^{\ge n}}(q')}\Big)^{\pm1}\le C\label{A.55}
\end{align}
2) The metric $g^{n-1}$ is $g^n$ slow: There is a constant $C'=C'(n,r,d)\ge 1$ so that 
\begin{align}
R_{P}^{^{\ge n}}(q)|q-q'|\le C'^{-1}\Longrightarrow \Big(\frac{R_{P}^{^{\ge n-1}}(q)}{R_{P}^{^{\ge n-1}}(q')}\Big)^{\pm1}\le C'
\end{align}
\end{lem}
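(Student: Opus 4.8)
The plan is to reduce both statements to a single scaling/comparison lemma about polynomials: for $P\in E_r$ the quantity $R_P^{^{\ge n}}(q)$ changes only by a bounded factor when $q$ moves by a step that is small in the metric $g^n$. First I would observe that $R_P^{^{\ge n}}$ is a finite sum of terms $|\partial^\alpha_q P(q)|^{1/|\alpha|}$ with $n\le|\alpha|\le r$, each of which is (up to the $1/|\alpha|$ power) a polynomial of degree at most $r-|\alpha|$; so the core task is to show that $|\partial^\alpha_q P(q')|^{1/|\alpha|}$ is comparable to $|\partial^\alpha_q P(q)|^{1/|\alpha|}$ under the hypothesis $R_P^{^{\ge n}}(q)|q-q'|\le C^{-1}$. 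Expanding $\partial^\alpha_q P(q')$ by Taylor's formula around $q$ gives
\begin{align*}
\partial^\alpha_q P(q') = \sum_{|\gamma|\le r-|\alpha|}\frac{1}{\gamma!}\partial^{\alpha+\gamma}_q P(q)\,(q'-q)^\gamma ,
\end{align*}
and one bounds the $\gamma\neq 0$ terms by $\sum_\gamma c_{\gamma}|\partial^{\alpha+\gamma}_q P(q)|\,|q'-q|^{|\gamma|}$. Since $|\alpha+\gamma|\ge|\alpha|\ge n$, each factor $|\partial^{\alpha+\gamma}_q P(q)|^{1/|\alpha+\gamma|}$ is one of the summands of $R_P^{^{\ge n}}(q)$, so $|\partial^{\alpha+\gamma}_q P(q)|\le R_P^{^{\ge n}}(q)^{|\alpha+\gamma|}$, and the error term is dominated by a sum of $\bigl(R_P^{^{\ge n}}(q)|q'-q|\bigr)^{|\gamma|}$ multiplied by $R_P^{^{\ge n}}(q)^{|\alpha|}$; choosing $C$ large makes this at most, say, $\tfrac12 R_P^{^{\ge n}}(q)^{|\alpha|}$ plus a controlled remainder. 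The subtlety is that one also needs the reverse comparison, i.e. a lower bound on $|\partial^\alpha_q P(q')|$; here I would argue symmetrically after first upgrading the hypothesis to $R_P^{^{\ge n}}(q')|q-q'|\le C'^{-1}$ using the already-established upper bound (this is exactly the bootstrap recorded in Remark following \eqref{A.2}).

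For part (1), summing the comparisons for all $\alpha$ with $n\le|\alpha|\le r$ yields $R_P^{^{\ge n}}(q')\le C\,R_P^{^{\ge n}}(q)$, and by the symmetry argument above also $R_P^{^{\ge n}}(q)\le C\,R_P^{^{\ge n}}(q')$, which is the stated two-sided bound; equivalently this says the metric $g^n=R_P^{^{\ge n}}(q)^2\,dq^2$ is slowly varying in the sense of \eqref{A.1}. For part (2), the point is that the hypothesis is still phrased with $g^n$ (the finer metric, since $R_P^{^{\ge n}}\ge R_P^{^{\ge n-1}}$ up to the constant terms), but now we must compare $R_P^{^{\ge n-1}}$, which contains the extra block of derivatives with $|\alpha|=n-1$. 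For those, Taylor expansion around $q$ gives $\partial^\alpha_q P(q')=\sum_{|\gamma|\le r-(n-1)}\tfrac1{\gamma!}\partial^{\alpha+\gamma}_q P(q)(q'-q)^\gamma$; the $\gamma=0$ term is $\partial^\alpha_q P(q)$ itself, and for $\gamma\neq 0$ we have $|\alpha+\gamma|\ge n$, so the error factors are again summands of $R_P^{^{\ge n}}(q)$ and are controlled by powers of $R_P^{^{\ge n}}(q)|q'-q|\le C'^{-1}$. Thus $|\partial^\alpha_q P(q')|^{1/(n-1)}$ stays comparable to $|\partial^\alpha_q P(q)|^{1/(n-1)}+(\text{small})$, and combining with part (1) for the $|\alpha|\ge n$ terms gives $R_P^{^{\ge n-1}}(q')\asymp R_P^{^{\ge n-1}}(q)$, i.e. $g^{n-1}$ is $g^n$ slow.

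The main obstacle I anticipate is purely bookkeeping: tracking how the constant $C$ (resp. $C'$) has to be enlarged at each of the finitely many steps — once to absorb the error terms in the upper bound, once to perform the hypothesis bootstrap $R_P^{^{\ge n}}(q)|q-q'|\le C^{-1}\Rightarrow R_P^{^{\ge n}}(q')|q-q'|\le (C')^{-1}$, and once more for the reverse comparison — and checking that the final constant depends only on $n,r,d$ and not on the particular polynomial $P$ (this uniformity is what makes the lemma useful for the partition of unity in \eqref{2.333M}). A minor point requiring care is the presence of the constant summands in $R_P^{^{\ge n}}$ (the terms $|\partial^\alpha_q P|^{1/|\alpha|}$ with $|\alpha|$ maximal are constants), which guarantee $R_P^{^{\ge n}}(q)$ is bounded below by a positive constant and hence that dividing by it, and by $\log$ of it later, is harmless; one should note this explicitly so the ratios in \eqref{A.55} are well defined.
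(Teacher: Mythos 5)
Your proposal is correct and follows essentially the same route as the paper: both rest on Taylor expansion of $\partial^\alpha_q P$ around $q$, the bound $|\partial^{\alpha+\gamma}_qP(q)|\le R_P^{^{\ge n}}(q)^{|\alpha+\gamma|}$ for $|\alpha+\gamma|\ge n$, and the smallness of $R_P^{^{\ge n}}(q)|q-q'|$, with the errors controlled \emph{additively} so that the sums $R_P^{^{\ge n}}$ (not the individual terms, which may vanish at one point and not the other) are comparable. The paper merely packages this via the rescaled polynomial $\overline{P}_q(t)=P\big(q+R_P^{^{\ge n}}(q)^{-1}t\big)$, normalized so that $R^{^{\ge n}}_{\overline{P}_q}(0)=1$, together with a uniform-continuity statement for $(u_\beta)\mapsto(|u_\beta|^{1/|\beta|})$ — the same bookkeeping you describe.
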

\begin{proof}
Assume $n,r\in\mathbb{N}^*$ with $n\ge r.$
Consider the map \begin{align*}f:&E_r\to E_r/E_n;\;\\&P\mapsto \overline{P}:\mathbb{R}^d\to\mathbb{R}\\&\quad\quad\quad\quad x\mapsto \overline{P}(x)\sum\limits_{n\le |\alpha|\le r}\frac{\partial^{\alpha}_xP(0)}{\alpha!}\Big(\frac{x}{R_P^{^{\ge n}}(0)}\Big)^{\alpha}~.
\end{align*} Set $K_{n,r}:=f(E_r)=\left\{\overline{P}\in E_r/E_n,\; R_{\overline{P}}^{^{\ge n}}(0)=1\right\}.$ Assume $\overline{P}\in K_{n,r}$ and $\beta\in\mathbb{N}^d$ with $|\beta|\ge n.$ Notice that there is a constant $c\ge1$ (uniform with respect to $\overline{P}$ and $\beta$) such that for $|t|\le c^{-1},$
\begin{align}
|\partial_t^{\beta}\overline{P}(t)-\partial_t^{\beta}\overline{P}(0)|&=|\sum\limits_{|\alpha|=1}\frac{\partial_t^{\alpha+\beta}\overline{P}(0)}{\alpha!}t^{\alpha}|\nonumber\\&\le \sum\limits_{|\alpha|=1}|\frac{\partial_t^{\alpha+\beta}\overline{P}(0)}{\alpha!}|\;|t|\le d|t|~.
\end{align}
On the other hand, the application $$\mathbb{R}^{\mathbb{N}}\to \mathbb{R}^{\mathbb{N}},\; (u_{\beta})_{n\le |\beta|\le r}\mapsto (|u_{\beta}|^{\frac{1}{|\beta|}})_{n\le |\beta|\le r}$$ is continuous. Then for all $\delta>0$ there exists $\eta=\eta(n,r)>0$ so that
\begin{align}
\max\limits_{n\le |\beta|\le r}|u_{\beta}-v_{\beta}|\le \eta\Longrightarrow \sum\limits_{n\le |\beta|\le r}\Big||u_{\beta}|^{\frac{1}{|\beta|}}-|v_{\beta}|^{\frac{1}{|\beta|}}\Big|\le \delta\;.
\end{align}
Thus for all $\delta>0$ there is a strictly positive constant $C_1=C_1(n,r,d)=\min(\frac{\eta}{d},c^{-1})\le1$ so that 
\begin{align}
|R_{\overline{P}}^{^{\ge n}}(t)-R_{\overline{P}}^{^{\ge n}}(0)|\le \delta=\delta R_{\overline{P}}^{^{\ge n}}(0)~,\label{A.8M}
\end{align}
holds when $|t|\le C_1.$ 

Now given a polynomial $V\in E_r$ and $q\in \mathbb{R}^d$ define $$\overline{P}_q(t)=V(q+R_{V}^{^{\ge n}}(q)^{-1}t)~.$$ Writting \begin{align}
R_{\overline{P}_q}^{^{\ge n}}(t)=\sum\limits_{n\le |\alpha|\le r}|\partial^{\alpha}_t\overline{P}_{q}(t)|^{\frac{1}{|\alpha|}}=R_{V}^{^{\ge n}}(q)^{-1}R_{V}^{^{\ge n}}(q+R_{V}^{^{\ge n}}(q)^{-1}t)\;,\label{A.10M}
\end{align}clearly  the polynomial $\overline{P}_q$ belongs to the set $K.$ Hence for $\delta=\frac{1}{2}$ we get by (\ref{A.8M})
\begin{align}
\frac{1}{2}R_{\overline{P}_q}^{^{\ge n}}(0)\le R_{\overline{P}_q}^{^{\ge n}}(t)\le 2R_{\overline{P}_q}^{^{\ge n}}(0)~,\label{A.9M}
\end{align}
when $|t|\le C_1.$

It follows from (\ref{A.9M}) and (\ref{A.9M}),
\begin{align}
  \Big(\frac{R_{V}^{^{\ge n}}(q+R_{V}^{^{\ge n}}(q)^{-1}t)}{R_{V}^{^{\ge n}}(q)}\Big)^{\pm1}\le 2\;,\end{align}
for $|t|\le C_1.$

Therefore by the above inequality there is a constant $C_1\le1$ (chosen uniformly with respect to $q,V$ once $r, n$ and $d$ are fixed) so that for all $q,q'\in\mathbb{R}^d$ such that $R_{V}^{^{\ge n}}(q)|q'-q|\le C_1,$
\begin{align}
\Big(\frac{R_{V}^{^{\ge n}}(q')}{R_{V}^{^{\ge n}}(q)}\Big)^{\pm1}\le 2~.
\end{align}
It remains now to prove that for every $n\in\left\{1,...,r\right\}$, the metric $g^{n-1}$ is $g^n$ slow. Assuming the slowlness of $g^n$, the inequality \begin{align}
\Big(\frac{R_{ V}^{^{\ge n}}(q')}{R_{  V}^{^{\ge n}}(q)}\Big)^{\pm1}\le 2~.\label{A.13M}
\end{align} holds when $R_{V}^{^{\ge n}}(q)|q'-q|\le C_1^{-1}.$
 
Denote as before $t=R_{ V}^{^{\ge n}}(q)(q'-q)$ and $\overline{P}_q(t)= V(q+R_{ V}^{^{\ge n}}(q)^{-1}t).$ Taking into account (\ref{A.13M}) and (\ref{A.10M}) it results 
\begin{align*}
|t|\le C_1^{-1}\Longrightarrow|\partial_t^{\alpha}\overline{P}_q(t)|^{\frac{1}{|\alpha|}}\le R^{^{\ge n}}_{\overline{P}_q}(t)\le 2~,
\end{align*}
for all $\alpha\in\mathbb{N}^d$ with $|\alpha|\ge n.$
Consequently,
\begin{align}
|t|\le C_1^{-1}\Longrightarrow|\partial_t^{\alpha}\overline{P}_q(t)|\le 2^r~,\label{A.15M}
\end{align}
for all $\alpha\in\mathbb{N}^d,\;|\alpha|\ge n.$

Using (\ref{A.15M}), one has when $|\alpha|=n-1$ and $|t|\le C_1^{-1}$ 
\begin{align}
|\partial_t^{\alpha}\overline{P}_q(t)-\partial_t^{\alpha}\overline{P}_q(0)|&=|\sum\limits_{|\beta|=1}\frac{\partial_t^{\beta}[\partial_t^{\alpha}\overline{P}_q](0)}{\beta!}t^{\beta}|=|\sum\limits_{|\beta|=1}\frac{\partial_t^{\alpha+\beta}\overline{P}_q(0)}{\beta!}t^{\beta}|\nonumber\\&\le \sum\limits_{|\beta|=1}|\frac{\partial_t^{\alpha+\beta}\overline{P}_q(0)}{\beta!}|\;|t|\le d2^r|t|~,
\end{align}
since $|\alpha+\beta|\ge n.$
On the other hand, the application $$\mathbb{R}^{\mathbb{N}}\to \mathbb{R}^{\mathbb{N}},\; (u_{\beta})_{ |\beta|=n-1}\mapsto (|u_{\beta}|^{\frac{1}{|\beta|}})_{|\beta|=n-1}$$ is  continuous. Then for all $\delta>0$ there exists $\eta'=\eta'(n)>0$ so that
\begin{align}
\max\limits_{|\beta|=n-1}|u_{\beta}-v_{\beta}|\le \eta'\Longrightarrow \sum\limits_{|\beta|=n-1}\Big||u_{\beta}|^{\frac{1}{|\beta|}}-|v_{\beta}|^{\frac{1}{|\beta|}}\Big|\le \delta\;.
\end{align}
Hence for $\delta=1$ there is a strictly positive constant $C'_1=C'_1(n,r,d)=\min(\frac{\eta'}{d2^r},C_1^{-1})\le1$ so that 
\begin{align}
\Big|\sum\limits_{|\alpha|=n-1}|\partial_t^{\alpha}\overline{P}_q(t)|^{\frac{1}{|\alpha|}}-|\partial_t^{\alpha}\overline{P}_q(0)|^{\frac{1}{|\alpha|}}\Big|\le 1~,\label{A.18M}
\end{align}
holds when $|t|\le C'_1.$

Using respectively Peetre's inequality ( $\Big(\frac{\langle X'\rangle}{\langle X\rangle}\Big)^{s}\le 2^{\frac{|s|}{2}}\langle X-X'\rangle$ for all $s\in\mathbb{R},X,X'\in \mathbb{R}$) then (\ref{A.18M}) yields when $|t|\le C'_1$  \begin{align}
 \Big(\frac{\langle\sum\limits_{|\alpha|=n-1}|\partial_t^{\alpha}\overline{P}_q(t)|^{\frac{1}{|\alpha|}}\rangle}{\langle\sum\limits_{|\alpha|=n-1}|\partial_t^{\alpha}\overline{P}_q(0)|^{\frac{1}{|\alpha|}}\rangle}\Big)^{\pm1}\le \sqrt{2}\langle\sum\limits_{|\alpha|=n-1}|\partial_t^{\alpha}\overline{P}_q(t)|^{\frac{1}{|\alpha|}}-|\partial_t^{\alpha}\overline{P}_q(0)|^{\frac{1}{|\alpha|}}\rangle\le 2~.\label{A.18}
\end{align}
Remember that for any sequence $(a_i)_{1\le i\le N}$ of positive numbers
\begin{align}
\Big(\sum_1^{N}a_i^p\Big)^{\frac{1}{p}}\le \sum_1^{N}a_i\le N^{\frac{1}{q}}\Big(\sum_1^{N}a_i^p\Big)^{\frac{1}{p}}~,
\end{align}
where the two real numbers $p,q>1$ are conjugate indices. In particular for any  real numbers $a,b$
\begin{align*}
(a^2+b^2)^{\frac{1}{2}}\le (|a|+|b|)\le 2^2(a^2+b^2)^{\frac{1}{2}}
\end{align*}
It results from the elementary above inequality
\begin{align*}
\langle\sum\limits_{|\alpha|=n-1}|\partial_t^{\alpha}\overline{P}_q(t)|^{\frac{1}{|\alpha|}}\rangle\le (1+\sum\limits_{|\alpha|=n-1}|\partial_t^{\alpha}\overline{P}_q(t)|^{\frac{1}{|\alpha|}})\le 4\langle\sum\limits_{|\alpha|=n-1}|\partial_t^{\alpha}\overline{P}_q(t)|^{\frac{1}{|\alpha|}}\rangle~,
\end{align*}
and 
\begin{align*}
\langle\sum\limits_{|\alpha|=n-1}|\partial_t^{\alpha}\overline{P}_q(0)|^{\frac{1}{|\alpha|}}\rangle\le (1+\sum\limits_{|\alpha|=n-1}|\partial_t^{\alpha}\overline{P}_q(0)|^{\frac{1}{|\alpha|}})\le 4\langle\sum\limits_{|\alpha|=n-1}|\partial_t^{\alpha}\overline{P}_q(0)|^{\frac{1}{|\alpha|}}\rangle~.
\end{align*}
Using the above two estimates with (\ref{A.18}) we immediately get for $|t|\le C'_1$
\begin{align}
\Big(\frac{1+\sum\limits_{|\alpha|=n-1}|\partial_t^{\alpha}\overline{P}_q(t)|^{\frac{1}{|\alpha|}}}{1+\sum\limits_{|\alpha|=n-1}|\partial_t^{\alpha}\overline{P}_q(0)|^{\frac{1}{|\alpha|}}}\Big)^{\pm1}\le  8~.\label{A.20M}
\end{align}
Notice that by (\ref{A.13M})
\begin{align}
\Big(\frac{1+\sum\limits_{|\alpha|=n-1}|\partial_t^{\alpha}\overline{P}_q(t)|^{\frac{1}{|\alpha|}}}{1+\sum\limits_{|\alpha|=n-1}|\partial_t^{\alpha}\overline{P}_q(0)|^{\frac{1}{|\alpha|}}}\Big)^{\pm1}=\Big(\frac{1+\sum\limits_{|\alpha|=n-1}\Big(\frac{|\partial_q^{\alpha} V(q')|}{R_{ V}^{^{\ge n}}(q)^{|\alpha|}}\Big)^{\frac{1}{|\alpha|}}}{1+\sum\limits_{|\alpha|=n-1}\Big(\frac{|\partial_q^{\alpha} V(q)|}{R_{V}^{^{\ge n}}(q)^{|\alpha|}}\Big)^{\frac{1}{|\alpha|}}}\Big)^{\pm1}\ge \frac{1}{2}\Big(\frac{R_{V}^{^{\ge n-1}}(q')}{R_{V}^{^{\ge n-1}}(q)}\Big)^{\pm1}~.\label{A.21M}
\end{align}
In conclusion, from (\ref{A.20M}) and (\ref{A.21M}) there is a constant $C'_1=\min(\frac{\eta'}{d2^r},C_1^{-1})\le1$ so that
\begin{align}
R_{V}^{^{\ge n}}(q)|q-q'|\le C'_1\Longrightarrow \Big(\frac{R_{V}^{^{\ge n-1}}(q')}{R_{V}^{^{\ge n-1}}(q)}\Big)^{\pm1}\le 16\;.
\end{align}
\end{proof}
The main feature of a slow varying metric is that it is possible to introduce some partitions of unity related to the metric in a way made precise in the following theorem. For more details and proof see \cite{Hor1} ( Section 1.4 page 25).
 \begin{thm}\label{thm.B.6}\cite{Hor1}
 For any slowly varying metric $g$ in $\mathbb{R}^m$ one can choose a sequence $x_{\nu}\in \mathbb{R}^m$ such that the balls 
 \begin{align*}
 B_{\nu}=\left\{x;\;g_{x_{\nu}}(x-x_{\nu})<1\right\}
 \end{align*}
 form a covering of $\mathbb{R}^m$ for which the intersection of more than $N=(4C^3+1)^m$ balls $B_{\nu}$ is always empty ($C$ is the constant in (\ref{A.1})). In addition, for any decreasing sequence $d_i$ with $\sum\limits_j d_j=1$ one can choose non negative $\phi_{\nu}\in\mathcal{C}_0^{\infty}(B_{\nu})$ with $\sum \phi_{\nu}=1$ in $\mathbb{R}^m$ so that for all $k$
 \begin{align*}
 |\phi_{\nu}^{(k)}(x;y_1,\cdots,y_k)|\le (NCC_1)^kg_x(y_1,0)\cdots g_x(y_k,0)/d_1\cdots d_k
 \end{align*}
 where $C$ is the constant in (\ref{A.1}) and $C_1$ is a constant that depends only on $m.$
 \end{thm}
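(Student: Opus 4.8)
The plan is to reproduce Hörmander's construction (\cite{Hor1}, Section~1.4), which combines a Vitali-type maximal packing argument with the slow-variation hypothesis. First I would fix a radius $\rho\in(0,1]$, to be adjusted at the end, pick $\varepsilon\ll\rho$, and use Zorn's lemma to select a maximal set $\{x_\nu\}\subset\mathbb{R}^m$ with the separation property $g_{x_\mu}(x_\mu-x_\nu)\ge\varepsilon$ for all $\mu\ne\nu$. By maximality, every $y\in\mathbb{R}^m$ must lie within $g$-distance $\varepsilon^{1/2}$ of some $x_\nu$ in one of the two relevant metrics, and the slow-variation bound (\ref{A.1}) (in the symmetric form (\ref{A.2})) upgrades this to $g_{x_\nu}(y-x_\nu)<\rho$; hence the balls $B_\nu=\{x:g_{x_\nu}(x-x_\nu)<\rho\}$ cover $\mathbb{R}^m$, and so do the concentric balls of radius $\rho/2$.

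For the multiplicity bound I would take a point $x\in\overline{B}_{\nu_1}\cap\cdots\cap\overline{B}_{\nu_k}$. If $\rho\le C^{-1}$, where $C$ is the constant of (\ref{A.1}), then $g_{x_{\nu_i}}$ and $g_x$ are comparable with factor $C$ for each $i$; consequently the $g_x$-balls of radius $\tfrac12(\varepsilon/C)^{1/2}$ about the $x_{\nu_i}$ are pairwise disjoint by the separation property, while all of them are contained in the $g_x$-ball of radius $(C\rho)^{1/2}+\tfrac12(\varepsilon/C)^{1/2}$ about $x$. Comparing Lebesgue volumes in the Euclidean structure attached to $g_x$ gives $k\le(2C(\rho/\varepsilon)^{1/2}+1)^m$, and the choice $\rho=4C^4\varepsilon$ (compatible with $\rho\le C^{-1}$ for $\varepsilon$ small) turns this into $k\le(4C^3+1)^m=N$, exactly as stated.

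For the partition of unity I would fix once and for all $\theta\in\mathcal{C}_0^\infty(\mathbb{R}^m)$ with $0\le\theta\le1$, $\theta\equiv1$ on $\{|x|<\rho/2\}$ and $\mathrm{supp}\,\theta\subset\{|x|<\rho\}$, set $\psi_\nu(x)=\theta(T_\nu(x-x_\nu))$ with $T_\nu$ a linear map normalising $g_{x_\nu}$ to the Euclidean quadratic form, and define $\phi_\nu=\psi_\nu/\sum_\mu\psi_\mu$. The denominator is $\ge1$ since the $\rho/2$-balls cover and $\le N$ by the multiplicity bound, so each $\phi_\nu$ lies in $\mathcal{C}_0^\infty(B_\nu)$ and $\sum_\nu\phi_\nu\equiv1$. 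Differentiating the quotient and using that the denominator is bounded below reduces the derivative estimate to bounds on the at most $N$ functions $\psi_\mu$ whose support meets that of $\phi_\nu$; a first derivative of such a $\psi_\mu$ in a direction $y$ is controlled by the $g_{x_\mu}$-length of $y$ up to a constant, which on the overlap region becomes $g_x(y)^{1/2}$ up to the factor $C^{1/2}$, so that, after forming the ratio, one recovers the bound in the statement with the combinatorial factor $N$ per derivative. To obtain the precise dependence $(NCC_1)^k/(d_1\cdots d_k)$ for an arbitrary decreasing sequence $(d_j)$ with $\sum_j d_j=1$, one follows Hörmander in replacing the single bump $\theta$ by an infinite convolution of mollifiers whose $j$-th factor has width comparable to $d_j$; this is the standard device for controlling all derivatives of a compactly supported cutoff simultaneously, at the price of constants one is free to let grow with the order.

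The genuinely delicate point is this last, uniform-in-$k$ estimate with the $(d_1\cdots d_k)^{-1}$ dependence; everything else is a routine packing argument. Since the applications in this article only involve derivatives of order $|\alpha|\le2$, the sharp dependence on the order is not actually needed here, and I would simply invoke \cite{Hor1} for the full statement.
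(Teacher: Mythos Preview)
The paper does not give its own proof of this theorem: it is quoted verbatim from \cite{Hor1} (Section~1.4, page~25) and stated without argument, the reader being referred to H\"ormander for details. Your proposal is precisely a sketch of H\"ormander's proof --- the maximal $\varepsilon$-separated family, the volume comparison giving the multiplicity bound $(4C^3+1)^m$, and the normalised bump functions for the partition of unity --- so there is nothing to compare; you are reproducing the very source the paper cites, and your outline is correct.
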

 Regarding the above Theorem we have the following result.
\begin{lem}\label{lemA.6}
Let $P\in E_r$ and $n\in\left\{1,...,r\right\}$, then there exists a partition of unity $\displaystyle\sum\limits_{j\in \mathbb{N}}\psi_j(q)^2\equiv~1$ in $\mathbb{R}^d$ such that:
\\
1) For all $ q\in \mathbb{R}^d,$ the cardinality of the set $\lbrace j, \Psi_j(q)\neq 0\rbrace$ is uniformely bounded.
\\
2) For any natural number $j\in\mathbb{N}, $ \begin{align*}\displaystyle\mathrm{supp}\;\Psi_j \subset B(q_j,aR^{^{\ge n}}_P(q_j)^{-1})\quad\text{and} \quad\;\Psi_j\equiv 1\;\;\text{in}\; B(q_j,bR^{^{\ge n}}_P(q_j)^{-1})~,\end{align*} for some $q_j\in \mathbb{R}^d$ with $0<b<a$ independent of $j\in \mathbb{N}~.$
\\
3) For all $\alpha\in\mathbb{N}^d\setminus\lbrace 0\rbrace,$ there exists $c_{\alpha}>0$ such that 
$$\displaystyle\sum\limits_{j\in\mathbb{N}}|\partial_q^{\alpha}\Psi_j|^2\le c_{\alpha} R^{^{\ge n}}_P(q)^{2|\alpha|}~.$$

Moreover the constants $a,b \;\text{et}\ c_{\alpha}$ can be chosen uniformly with respect to $P\in E_r$, once the degree $r\in \mathbb{N}\;\text{ and the dimension }\; \ d\in \mathbb{N} \;\text{are fixed}.$
\end{lem}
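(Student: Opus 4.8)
The plan is to construct the partition of unity directly from Hörmander's theorem (Theorem~\ref{thm.B.6}) applied to the metric $g^n = R_P^{^{\ge n}}(q)^2\,dq^2$, which is slowly varying by part~1) of Lemma~\ref{A.5}. First I would invoke Theorem~\ref{thm.B.6} with $m=d$ and $g=g^n$ to obtain a sequence $q_j\in\mathbb{R}^d$ and a covering by the balls $B_j=\{q:\;g^n_{q_j}(q-q_j)<1\}$, i.e. $B_j=B(q_j,R_P^{^{\ge n}}(q_j)^{-1})$, with the property that no more than $N=(4C^3+1)^d$ of them overlap at any point (this gives conclusion~1) immediately), together with nonnegative functions $\phi_j\in\mathcal{C}^\infty_0(B_j)$ with $\sum_j\phi_j\equiv 1$ and the stated derivative bounds for a chosen decreasing summable sequence $(d_i)$ (e.g. $d_i=2^{-i}$).

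The functions $\phi_j$ satisfy $\sum\phi_j=1$ but not $\sum\phi_j^2=1$, and they are only supported in $B(q_j,R_P^{^{\ge n}}(q_j)^{-1})$ without the ``$\equiv 1$ on a smaller ball'' property demanded in conclusion~2). To fix both issues at once, I would set $\Psi_j=\phi_j\big/\big(\sum_k\phi_k^2\big)^{1/2}$. The denominator is $\ge N^{-1/2}>0$ everywhere (since at each point the at most $N$ nonzero $\phi_k$ sum to $1$, so their squares sum to at least $1/N$), hence $\Psi_j$ is smooth, $\operatorname{supp}\Psi_j\subset\operatorname{supp}\phi_j\subset B(q_j,R_P^{^{\ge n}}(q_j)^{-1})$, and $\sum_j\Psi_j^2\equiv 1$. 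For the ``$\equiv 1$ on $B(q_j,b R_P^{^{\ge n}}(q_j)^{-1})$'' clause one notes that Hörmander's construction produces $\phi_j$ obtained by normalizing a fixed bump that equals a positive constant on a fixed fraction of $B_j$; alternatively, and more robustly, one rescales: apply the theorem with $g=\lambda g^n$ for a small fixed $\lambda$ (still slowly varying with the same constant after adjusting $C$), which shrinks the balls to radius $\lambda^{-1/2}R_P^{^{\ge n}}(q_j)^{-1}$ on which $\phi_j$ can be arranged to be identically positive, then pass to $\Psi_j$; the ratio $b/a$ comes out as an absolute constant. Since $R_P^{^{\ge n}}(q)\asymp R_P^{^{\ge n}}(q_j)$ on $B_j$ by the slowness \eqref{A.55}, the constants $a,b$ may indeed be taken uniform once $r$ and $d$ are fixed.

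For conclusion~3), I would differentiate $\Psi_j=\phi_j S^{-1/2}$ with $S=\sum_k\phi_k^2$ by the Leibniz and Fa\`a di Bruno rules. Each $\partial^\alpha\phi_j$ is bounded by $(NCC_1)^{|\alpha|} g^n_q(\cdot)$-factors over $d_1\cdots d_{|\alpha|}$, which on $\operatorname{supp}\phi_j$ gives $|\partial^\alpha\phi_j(q)|\le c_\alpha R_P^{^{\ge n}}(q_j)^{|\alpha|}\le c_\alpha' R_P^{^{\ge n}}(q)^{|\alpha|}$ using \eqref{A.55}; the same bound holds for every $\phi_k$ whose support meets a neighbourhood of $q$, and there are at most $N$ of them, so $S$ and all its derivatives, and $S^{-1/2}$ (using $S\ge N^{-1}$) and its derivatives, are bounded by powers of $R_P^{^{\ge n}}(q)$. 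Combining, $|\partial^\alpha\Psi_j(q)|\le C_\alpha R_P^{^{\ge n}}(q)^{|\alpha|}$ pointwise; squaring and summing over the at most $N$ indices $j$ with $q\in\operatorname{supp}\Psi_j$ yields $\sum_j|\partial^\alpha\Psi_j(q)|^2\le c_\alpha R_P^{^{\ge n}}(q)^{2|\alpha|}$, as required. I expect the main obstacle to be purely bookkeeping: tracking the combinatorial constants through the normalization $\Psi_j=\phi_j S^{-1/2}$ and verifying that they stay uniform over $P\in E_r$ — this relies essentially on the fact that the slowness constant $C$ in \eqref{A.55} and the overlap bound $N$ depend only on $n,r,d$, which is already established in Lemma~\ref{A.5}.
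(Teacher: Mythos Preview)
The paper does not actually supply a proof of this lemma; it is stated immediately after Theorem~\ref{thm.B.6} with only the sentence ``Regarding the above Theorem we have the following result'', so the author is treating it as an essentially routine consequence of H\"ormander's partition-of-unity theorem applied to the slowly varying metric $g^n=R_P^{^{\ge n}}(q)^2\,dq^2$ from Lemma~\ref{A.5}. Your proposal follows exactly this intended route, and your treatment of parts~1) and~3) --- the bounded overlap coming from $N=(4C^3+1)^d$, the normalization $\Psi_j=\phi_j S^{-1/2}$ with $S\ge N^{-1}$, and the derivative bounds via Leibniz/Fa\`a di Bruno together with the slowness equivalence \eqref{A.55} --- is correct and is the standard argument.

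The one place where your sketch is not quite complete is the clause $\Psi_j\equiv 1$ on $B(q_j,bR_P^{^{\ge n}}(q_j)^{-1})$ in part~2). With $\sum_j\Psi_j^2=1$ this forces \emph{every other} $\Psi_k$ to vanish on that inner ball, which is a separation condition on the supports, not a positivity condition on $\phi_j$; neither of your two suggested fixes (that $\phi_j$ is constant on a core, or rescaling the metric by $\lambda$) delivers this. What one actually needs is to go back into H\"ormander's construction of the centres: the $q_j$ are chosen as a maximal family with $B(q_j,\epsilon R_P^{^{\ge n}}(q_j)^{-1})$ pairwise disjoint for some small fixed $\epsilon>0$, and then one takes $a$ just large enough for the covering and $b$ small enough (relative to $\epsilon$ and the slowness constant $C$) that each inner ball $B(q_j,bR_P^{^{\ge n}}(q_j)^{-1})$ misses all the other supports $B(q_k,aR_P^{^{\ge n}}(q_k)^{-1})$. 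This is pure bookkeeping once one has \eqref{A.55}, but it is a step beyond ``apply Theorem~\ref{thm.B.6} and normalize''.
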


\end{appendices}
\begin{appendices}
\section{Around Tarski-Seidenberg theorem}\label{B}
In this appendix we give an application of the Tarski-Seidemberg theorem \cite{Hor2}, which we state in the following geometric form. We first introduce a few basic concepts which are needed for the state.
\begin{defi}
A subset of $\mathbb{R}^n$ is called semi-algebraic if it is a finite union of finite intersections of sets defined by polynomial equations or inequalities.
\end{defi}
\begin{defi}
Let $A \subset \mathbb R^n$ and $B \subset \mathbb R^m$ be two sub-algebraic sets. The function $f : A\to B$ is said to be semi-algebraic if its graph
$\Gamma_f = \left\{(x, y) \in A \times B ;\; y = f(x)\right\}$
is a semi-algebraic set of  $\mathbb R^n \times \mathbb{R}^m.$
\end{defi}
\begin{thm}\label{th.C.3}\cite{Hor2}(Tarski-Seidenberg)
If $A$ is a semi-algebraic subset of $\mathbb{R}^{n+m}=\mathbb{R}^{n}\oplus \mathbb{R}^{m}$, then the projection $A'$ of $A$ in $\mathbb{R}^{m}$ is also semi-algebraic.
\end{thm}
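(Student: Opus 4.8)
The plan is to follow the classical route to this statement: reduce to eliminating one variable at a time, and then settle the one-variable case by the theory of sign conditions for polynomials (Sturm sequences and subresultants). Since the projection $\pi:\mathbb{R}^{n}\oplus\mathbb{R}^{m}\to\mathbb{R}^{m}$ is a composition of $n$ coordinate projections of the form $\mathbb{R}^{\ell+1}\to\mathbb{R}^{\ell}$, and since the class of semi-algebraic sets is stable under finite unions while the image under $\pi$ of a union is the union of the images, it suffices to prove: if $A\subset\mathbb{R}^{m+1}$ is a basic set $\{(y,t):\ P_{i}(y,t)>0\ (i\in I),\ Q_{j}(y,t)=0\ (j\in J)\}$ with $P_{i},Q_{j}\in\mathbb{R}[y_{1},\dots,y_{m},t]$, then $\pi(A)=\{y:\ \exists t,\ (y,t)\in A\}$ is semi-algebraic in $\mathbb{R}^{m}$. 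Here I use that every semi-algebraic set is a finite union of such basic sets, since $<$ turns into $>$ after a sign change, $\ne$ splits into two strict inequalities, and $\ge$ into the disjunction of $>$ and $=$.

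The core of the argument is that, for fixed $y$, whether there is a $t$ with $(y,t)\in A$ depends only on the combinatorial arrangement on the real line of the real roots of the univariate polynomials $P_{i}(y,\cdot),Q_{j}(y,\cdot)$ and of the signs the $P_{i}(y,\cdot)$ take between them --- and that this arrangement is a semi-algebraic function of the coefficients. I would establish this through the standard remainder-sequence (Sturm) and subresultant calculus: the number of real roots of a polynomial in an interval, whether two polynomials have a common root, and the sign of one polynomial at the roots of another are all expressible by polynomial equalities and inequalities in the coefficients. Organizing this as an induction as in H\"ormander's treatment, on the sum of the $t$-degrees of the polynomials involved, one splits $\mathbb{R}^{m}$ according to whether a given leading coefficient $c(y)$ --- itself a polynomial in $y$ --- vanishes: on $\{c=0\}$ a $t$-degree drops; on $\{c\ne0\}$ one pseudo-divides the remaining polynomials by the relevant $Q_{j}$, lowering degrees and turning $\exists t$ into a quantifier over the roots of $Q_{j}$. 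The base case, with no equality constraint, is settled by the same idea after enlarging $\{P_{i}\}$ to be closed under $\partial_{t}$ (Thom's lemma): the full sign vector of the enlarged family is constant on each interval between consecutive real roots of its product, so the existence of $t$ with $P_{i}(y,t)>0$ for all $i$ reduces to finitely many sign conditions read off at those roots, again semi-algebraic in $y$. Taking the finite union of the finitely many cases produced exhibits $\pi(A)$ as semi-algebraic; this is exactly Tarski's quantifier-elimination statement for real closed fields.

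The delicate point, which I expect to absorb essentially all of the work, is this second step: checking that every piece of the roots-and-signs combinatorics really does translate into polynomial conditions on the coefficients, and keeping track of the many degenerate branches that appear when leading coefficients vanish. The reductions to one variable and to basic sets, and the repeated use of closure of semi-algebraic sets under finite boolean operations, are routine bookkeeping. Since the statement is classical, it is legitimate to carry out only the reduction to one variable here and invoke the one-variable elimination from \cite{Hor2}, where a self-contained proof along these lines is given.
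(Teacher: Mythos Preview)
Your sketch follows the classical quantifier-elimination approach (reduction to projecting out one variable, then Sturm/subresultant calculus to express sign conditions on roots semi-algebraically in the coefficients), which is precisely the route taken in \cite{Hor2}. The paper itself, however, does not prove this statement at all: it is quoted as a classical result and the reader is referred to \cite{Hor2} (Theorem~A.2.2) for the proof. So there is nothing to compare --- you have actually supplied more than the paper does, and what you supplied is correct in outline and consistent with the cited reference.
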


\begin{prop}\label{prop.C.3.1}\cite{Hor2}\label{propB.6.1}
If $E$ is a semi-algebraic set on $\mathbb{R}^{2+n}$, and 
\begin{align*}f(x)=\inf\left\{y\in\mathbb{R};\;\exists z\in \mathbb{R}^n, (x,y,z)\in E\right\}\end{align*} is defined and finite for large positive $x$, then $f$ is identically  0 for lage $x$ or else
\begin{align*}f(x)=Ax^a(1+o(1))~,\;\;\;x\to +\infty\end{align*}
where $A\not=0$ and $a$ is a rational number.
\end{prop}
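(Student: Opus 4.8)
The plan is to upgrade the graph of $f$ to a semi-algebraic set via Theorem~\ref{th.C.3}, to argue that a one-dimensional semi-algebraic curve must, near $+\infty$, lie inside an algebraic curve, and then to read off the asymptotics from a Newton--Puiseux expansion at infinity. For the first point I would write $\pi\colon\mathbb{R}^{2+n}\to\mathbb{R}^{2}$ for the projection onto the $(x,y)$-coordinates; then $\pi(E)$ is semi-algebraic by Theorem~\ref{th.C.3}, and the graph
\[
\Gamma_f=\bigl\{(x,y):\ (x,y)\in\pi(E)\ \text{and}\ \forall y'\ \bigl((x,y')\in\pi(E)\Rightarrow y\le y'\bigr)\bigr\}
\]
is obtained from $\pi(E)$ by one bounded quantification, which a further use of Tarski--Seidenberg (applied to the complement) eliminates; hence $\Gamma_f$ is semi-algebraic. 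By hypothesis there is $R_0>0$ such that $\Gamma_f\cap\{x>R_0\}$ is the graph of a finite single-valued function on $(R_0,+\infty)$.

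Next I would exploit that a semi-algebraic subset of $\mathbb{R}$ is a finite union of points and open intervals. Applied to $\{x>R_0:\ f(x)=0\}$ this shows that, after enlarging $R_0$, either $f\equiv 0$ on $(R_0,+\infty)$ --- the first alternative of the statement --- or $f$ never vanishes there; assume the latter. Since $\Gamma_f\cap\{x>R_0\}$ is a one-dimensional semi-algebraic set it cannot be contained in a finite point set, so among the finitely many polynomials defining $\Gamma_f$ there is a nonzero $P\in\mathbb{R}[x,y]$ with $P(x,f(x))=0$ for all $x>R_0$, and I may strip from $P$ any factor depending on $x$ alone.

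Finally I would substitute $x=1/t$ and clear denominators to obtain $Q(t,y)$ with $Q(t,f(1/t))=0$ for small $t>0$ and $Q(0,\cdot)\not\equiv 0$, then invoke the Newton--Puiseux theorem: the roots of $Q(t,\cdot)$ near $t=0$ are finitely many convergent Puiseux series $y=\sum_{k\ge k_0}c_k\,t^{k/q}$ with $q\in\mathbb{N}^{*}$ and $c_{k_0}\neq 0$. The real branch $t\mapsto f(1/t)$ agrees with one of these for $t>0$ small, so setting $a=-k_0/q\in\mathbb{Q}$ and $A=c_{k_0}\neq 0$ (real, because $f$ is real-valued) gives
\[
f(x)=\sum_{k\ge k_0}c_k\,x^{-k/q}=A\,x^{a}\bigl(1+o(1)\bigr),\qquad x\to+\infty,
\]
as claimed.

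The crux of the argument is the second step: turning the purely logical statement ``$\Gamma_f$ is semi-algebraic'' into the concrete fact that a single nonzero bivariate polynomial annihilates $f$ on a neighbourhood of $+\infty$. This rests on the fine structure of one-variable semi-algebraic sets (finiteness of connected components, curve selection) together with the one-dimensionality of a function graph. Once $P(x,f(x))\equiv 0$ is in hand, the last step is the classical Puiseux computation; the only care needed there is to verify that the real curve $x\mapsto f(x)$ follows one single Puiseux branch and that its leading coefficient is real, both of which are immediate.
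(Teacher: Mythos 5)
The paper does not actually prove this proposition: it is quoted from H\"ormander \cite{Hor2} (Theorem A.2.5), and the text explicitly refers the reader there for the proof. So the only question is whether your argument stands on its own. It follows the standard route (Tarski--Seidenberg to make the graph semi-algebraic, the cell structure of planar semi-algebraic sets to produce an annihilating polynomial, Newton--Puiseux at infinity for the asymptotics), and in outline it is sound.

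There is, however, one concrete flaw in the first step. You define $\Gamma_f$ inside $\pi(E)$, i.e.\ you require $(x,y)\in\pi(E)$ together with $y$ being a lower bound of the fibre; this describes the \emph{minimum}, not the infimum. The infimum of $\{y:(x,y)\in\pi(E)\}$ need not be attained: take $E=\{(x,y)\in\mathbb{R}^2:\,xy>1,\ y>0\}$, for which $f(x)=1/x$ for $x>0$ but $(x,1/x)\notin\pi(E)$, so your $\Gamma_f$ is empty while $f$ is not. The repair is standard: describe $\Gamma_f$ by the first-order formula ``$y$ is a lower bound of the fibre, and for every $\epsilon>0$ the number $y+\epsilon$ is not'', and eliminate the quantifiers with Theorem~\ref{th.C.3}. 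A second, smaller point: the reason one of the cells covering $\Gamma_f$ over a neighbourhood of $+\infty$ must carry a nontrivial \emph{equation} is not that the graph ``cannot be contained in a finite point set'', but that the graph of a single-valued function has empty interior in $\mathbb{R}^2$, so a cell defined by strict inequalities alone (hence open) and contained in $\Gamma_f$ would have to be empty. With these two repairs, and the standard (but worth a sentence) observation that a one-variable semi-algebraic function is analytic near $+\infty$ and therefore follows a single Puiseux branch, your argument is complete and agrees with the classical proof being cited.
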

We refer to \cite{Hor2} (see Theorem A.2.2 and Theorem A.2.5) for detailed proofs of Theorem~\ref{th.C.3} and Proposition~\ref{prop.C.3.1}.

In the final part of this section we list and recall the following notations. 
\begin{NT} Let $P$ be a polynomial of degree $r.$ For all natural number $n\in\left\{0,\cdots ,r\right\}$ and every $q\in \mathbb{R}^d$ 
 \begin{align}
R^{^{\ge n}}_P(q)=\sum\limits_{ n\le|\alpha|\le r}|\partial^{\alpha}_qP(q)|^{\frac{1}{|\alpha|}}\;,\label{A.43}
 \end{align}
  \begin{align}
R^{= n}_P(q)=\sum\limits_{ |\alpha|=n}|\partial^{\alpha}_qP(q)|^{\frac{1}{|\alpha|}}\;.\label{A.44}
 \end{align}
\end{NT}
\begin{lem}\label{lem2.1}
Let $\Sigma$ be an unbounded semialgebraic set and $V$ a polynomial of degree $r$ satisfying the following assumption
\begin{align}
\lim\limits_{\substack{q\to\infty \\ q\in \Sigma}}\frac{R_V^{^{\ge n}}(q)^{\alpha}}{R_V^{= m}(q)^2}=0~,
\end{align}
where $\alpha\in\mathbb{Q}, n, m\in\left\{0,1,\cdots ,r-1\right\}$ are fixed numbers. 

Then there exist $\delta\in (0,1)$ and a positive function
$\Lambda_{\Sigma}: (0,+\infty)\to [0,+\infty)$ so that 
\begin{eqnarray*}
  &&\forall q\in \Sigma\,, |q|\geq \varrho\,, \quad
     \Lambda_{\Sigma}(\varrho)R_V^{^{\ge n}}(q)^{\alpha}\le
     \,R_V^{=m}(q)^{2(1-\delta)}\label{B.1}\\
\text{and}&&
\lim_{\varrho\to +\infty}\Lambda_{\Sigma}(\varrho)=+\infty\,.
\end{eqnarray*}
\end{lem}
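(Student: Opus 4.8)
The plan is to reduce the statement to the one-variable asymptotic result given by the Tarski--Seidenberg machinery (Proposition~\ref{prop.C.3.1}). First I would introduce the two auxiliary functions on $(0,+\infty)$ defined by
\[
m_\Sigma(\varrho)=\inf\Big\{\tfrac{R_V^{=m}(q)^2}{R_V^{\ge n}(q)^\alpha}\ :\ q\in\Sigma,\ |q|\ge\varrho\Big\}\,.
\]
Because $R_V^{=m}$ and $R_V^{\ge n}$ are finite sums of rational powers of absolute values of polynomials, the set
\[
E=\Big\{(\varrho,y,q)\ :\ q\in\Sigma,\ |q|^2\ge\varrho,\ y\,R_V^{\ge n}(q)^\alpha= R_V^{=m}(q)^2\Big\}
\]
is semialgebraic (one clears denominators and introduces slack variables for the fractional powers, then uses Theorem~\ref{th.C.3} to project away the auxiliary variables), so $m_\Sigma$ is a semialgebraic function of $\varrho$. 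The hypothesis $\lim_{q\to\infty,\,q\in\Sigma}R_V^{\ge n}(q)^\alpha/R_V^{=m}(q)^2=0$ together with unboundedness of $\Sigma$ gives $m_\Sigma(\varrho)\to+\infty$ as $\varrho\to+\infty$; in particular $m_\Sigma$ is finite and positive for large $\varrho$.

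Next I would apply Proposition~\ref{prop.C.3.1} to $m_\Sigma$: since it is semialgebraic, finite for large $\varrho$, and tends to $+\infty$, it is not eventually $0$, hence there are $A>0$ and a rational $a>0$ with $m_\Sigma(\varrho)=A\varrho^a(1+o(1))$ as $\varrho\to+\infty$. The positivity of $a$ is forced by $m_\Sigma\to+\infty$. Now choose any rational $\delta\in(0,1)$ with $\delta<a/(a+1)$, equivalently $a(1-\delta)>\delta$; this is possible since $a/(a+1)>0$. Then for $q\in\Sigma$ with $|q|\ge\varrho$ one has, by definition of $m_\Sigma$,
\[
R_V^{=m}(q)^2\ \ge\ m_\Sigma(\varrho)\,R_V^{\ge n}(q)^\alpha\,,
\]
and I would split the right-hand factor $R_V^{=m}(q)^2=R_V^{=m}(q)^{2\delta}\cdot R_V^{=m}(q)^{2(1-\delta)}$, using $R_V^{=m}(q)\ge R_V^{=m}(0)$, i.e.\ a positive lower bound, so that $R_V^{=m}(q)^{2(1-\delta)}\ge$ the desired quantity after absorbing constants — more carefully, one wants to bound $R_V^{\ge n}(q)^\alpha$ from above in terms of $R_V^{=m}(q)^{2(1-\delta)}$ and a growing weight, which I would obtain by raising the displayed inequality to the power $1-\delta$, giving $R_V^{=m}(q)^{2(1-\delta)}\ge m_\Sigma(\varrho)^{1-\delta}R_V^{\ge n}(q)^{\alpha(1-\delta)}$, and then comparing $R_V^{\ge n}(q)^{\alpha(1-\delta)}$ with $R_V^{\ge n}(q)^{\alpha}$ using that $R_V^{\ge n}$ is bounded below by a positive constant while also being controlled polynomially by $\varrho$-independent estimates — this is where a small additional algebraic manipulation is needed.

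The cleanest route, and the one I would actually carry out, is to define
\[
\Lambda_\Sigma(\varrho)=\tfrac12\,m_\Sigma(\varrho)^{\,\delta}
\]
for a suitable $\delta$: from $R_V^{=m}(q)^2\ge m_\Sigma(\varrho)R_V^{\ge n}(q)^\alpha$ and the crude bound $R_V^{\ge n}(q)^{\alpha}\le C\,R_V^{=m}(q)^{2}$-type inequalities valid uniformly (again a consequence of the hypothesis, or simply of $m_\Sigma\ge$ const for large $\varrho$), one deduces $\Lambda_\Sigma(\varrho)R_V^{\ge n}(q)^\alpha\le R_V^{=m}(q)^{2(1-\delta)}$ for $|q|\ge\varrho$, $q\in\Sigma$, with $\Lambda_\Sigma(\varrho)\to+\infty$ because $m_\Sigma(\varrho)\to+\infty$ and $\delta>0$. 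The main obstacle, and the step deserving the most care, is the semialgebraicity verification for $E$: the functions $R_V^{=m}$ and $R_V^{\ge n}$ involve exponents $1/|\alpha|$ and the rational power $\alpha$, so one must genuinely introduce auxiliary coordinates $t_\beta$ with $t_\beta^{|\beta|}=|\partial^\beta_q V(q)|$ (and sign/positivity constraints), express everything polynomially in $(q,t_\beta,\varrho,y)$, and only then invoke Theorem~\ref{th.C.3} to project. Once that is in place, Proposition~\ref{prop.C.3.1} does all the analytic work and the rest is elementary.
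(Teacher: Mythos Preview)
Your overall strategy---pass to a one-variable semialgebraic function via Tarski--Seidenberg and invoke Proposition~\ref{prop.C.3.1} to extract a power law---is exactly the paper's route, and your remark about handling the fractional exponents $1/|\beta|$ by introducing auxiliary variables (the paper does this by replacing $R_V^{\ge n}$, $R_V^{=m}$ with polynomial versions $\widetilde R_V^{\ge n}$, $\widetilde R_V^{=m}$ via an lcm trick) is well taken.

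The genuine gap is in the final algebraic step. From $R_V^{=m}(q)^2\ge m_\Sigma(\varrho)\,R_V^{\ge n}(q)^\alpha$ for $|q|\ge\varrho$ alone, you \emph{cannot} reach $R_V^{=m}(q)^{2(1-\delta)}\ge\Lambda_\Sigma(\varrho)\,R_V^{\ge n}(q)^\alpha$ with $\Lambda_\Sigma\to\infty$. Lowering the left exponent from $2$ to $2(1-\delta)$ costs a factor $R_V^{=m}(q)^{2\delta}$, and for fixed $\varrho$ this quantity is \emph{unbounded} as $q$ ranges over $\{|q|\ge\varrho\}\cap\Sigma$: no choice of $\Lambda_\Sigma(\varrho)=\tfrac12 m_\Sigma(\varrho)^\delta$ or similar can absorb it. Your attempt to repair this by raising to the power $1-\delta$ trades the problem for controlling $R_V^{\ge n}(q)^{\alpha\delta}$, which is equally unbounded; and the claimed lower bound $R_V^{=m}(q)\ge R_V^{=m}(0)$ is false in general (take $V(q)=q^3$, $m=1$). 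The phrases ``small additional algebraic manipulation'' and ``crude bound $R_V^{\ge n}(q)^{\alpha}\le C\,R_V^{=m}(q)^{2}$-type inequalities'' are precisely where the argument collapses.

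What is missing is a second, independent piece of information linking $R_V^{=m}(q)$ to $|q|$. The paper uses the \emph{pointwise} inequality $R_V^{\ge n}(q)^\alpha\lesssim |q|^{-a}\,R_V^{=m}(q)^2$ (i.e.\ $m_\Sigma(|q|)$ rather than $m_\Sigma(\varrho)$), splits $|q|^{-a}=|q|^{-a/2}\cdot|q|^{-a/2}$, and then converts one factor $|q|^{-a/2}$ into $R_V^{=m}(q)^{-2\delta}$ via the elementary polynomial upper bound $R_V^{=m}(q)\le C|q|^{s}$ (whence $|q|^{-a/2}\le C'\,R_V^{=m}(q)^{-a/(2s)}$). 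The remaining factor $|q|^{a/2}\ge\varrho^{a/2}$ supplies $\Lambda_\Sigma(\varrho)$. This polynomial upper bound on $R_V^{=m}$ is the ingredient your sketch never invokes, and without it the passage from exponent $2$ to $2(1-\delta)$ cannot be closed.
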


\begin{proof}
Suppose that there are $\alpha\in\mathbb{Q}, n, m\in\left\{0,1,\cdots ,r-1\right\}$ such that 
\begin{align}
\lim\limits_{\substack{q\to\infty \\ q\in \Sigma}}\frac{R_V^{^{\ge n}}(q)^{\alpha}}{R_V^{= m}(q)^2}=0~,\label{B.55}
\end{align}
where $\Sigma$ is a given unbounded semialgebraic set. 

After setting $\tau=\mathrm{ppcm}\Big(|\beta|,\;\min(n,m)\le |\beta|\le r\Big)~,$
define the functions $\widetilde{R}_V^{^{\ge n}}$ and $\widetilde{R}_V^{= m}~,$ for all $q\in\mathbb{R}^d$ by $$\widetilde{R}_V^{^{\ge n}}(q)=\sum\limits_{n\le |\alpha|\le r}|\partial^{\alpha}_qV(q)|^{\frac{\tau}{|\alpha|}}$$ and $$\widetilde{R}_V^{= m}(q)=\sum\limits_{ |\alpha|= m}|\partial^{\alpha}_qV(q)|^{\frac{\tau}{|\alpha|}}~.$$ 
Notice that one has the equivalences $R_V^{^{\ge n}}(q)\asymp \Big(\widetilde{R}_V^{^{\ge n}}(q)\Big)^{\frac{1}{\tau}}$  and $R_V^{=m}(q)\asymp\Big(\widetilde{R}_V^{=m}(q)\Big)^{\frac{1}{\tau}}$ for all $q\in\mathbb{R}^d$ where the functions $R_V^{^{\ge n}}$ and $R_V^{=m}$ are defined respectively as in (\ref{A.43}) and (\ref{A.44}). Clearly the Assumption (\ref{B.55}) is equivalent to 
\begin{align}
\lim\limits_{\substack{q\to\infty \\ q\in \Sigma}}\frac{\widetilde{R}_V^{^{\ge n}}(q)^{\alpha}}{\widetilde{R}_V^{= m}(q)^2}=0\;.\label{B.3}
\end{align}
Remark here that $\widetilde{R}_V^{^{\ge n}}(q)$ and $\widetilde{R}_V^{= m}(q)$ are polynomials in $q\in\mathbb{R}^d$ variable. Furthermore, the Assumption (\ref{B.3}) can be written as follows \begin{align*}
\widetilde{R}_V^{^{\ge n}}(q)^{\alpha}\le \epsilon(q)\widetilde{R}_V^{=m}(q)^2\;,
\end{align*}
for all $q\in \Sigma$ where \begin{align}\epsilon(q)=\inf\left\{\epsilon>0,\;\epsilon\widetilde{R}_V^{=m}(q)^2-\widetilde{R}_V^{^{\ge n}}(q)^{\alpha}>0\right\}~,\quad\quad
\lim\limits_{\substack{q\to\infty \\ q \in  \Sigma}} \epsilon(q)=0~.\label{B.8M}
\end{align}
Now, following the notations of Proposition~\ref{prop.C.3.1}, we
introduce the
set \begin{align*}E=\left\{(q,\varrho,\epsilon)\in\mathbb{R}^{d+2}\;
      \text{such
      that}\;\epsilon\widetilde{R}_V^{=m}(q)^2-\widetilde{R}_V^{^{\ge
      n}}(q)^{\alpha}>0\;\text{and}\;|q|^2\ge
      \varrho^2\right\}~,\end{align*}
 and the function $f$ defined in
  $\mathbb{R}_+$ by
 \begin{align}f(\varrho)=\inf\left\{\epsilon>0,\; \text{such that}\; (q,\varrho,\epsilon)\in E\right\}~.\label{B.9M}\end{align}
By Tarski-Seidenberg theorem (see Theorem~\ref{th.C.3}), the function
$f$ is semialgebraic in $\varrho.$ Moreover $f$ is defined, finite and not
identically zero. Then by Proposition~\ref{propB.6.1}, there exist a constant $A>0$ and a rational number $\gamma$ such that 
\begin{align*}
f(\varrho)=A\varrho^{\gamma}+o_{\varrho\to +\infty}(\varrho^{\gamma})~.
\end{align*} 
By the definition (\ref{B.9M}) and (\ref{B.8M}), $\lim\limits_{\varrho\to
  +\infty}f(\varrho)=0$ and then $\gamma<0$\,. Hence for  $\varrho\geq
1$\,,  we
know $f(\varrho)\leq \frac{2A}{\varrho^{|\gamma|}}$\,.
We deduce for $|q|\geq 1$\,,
\begin{align}
\label{B.6}
\widetilde{R}_V^{^{\ge n}}(q)^{\alpha}\le
 f(|q|)\widetilde{R}_V^{=m}(q)^2
\le \frac{2A}{|q|^{|\gamma|}}\widetilde{R}_V^{=m}(q)^2
\end{align}
and
\begin{equation}
  \label{B.6.2}
\frac{|q|^{|\gamma|/2}}{2A}\widetilde{R}_{V}^{\geq n}(q)^{\alpha}\leq
\frac{1}{|q| ^{\frac{|\gamma|}{2}}}\widetilde{R}_{V}^{=m}(q)^{2}\,.
\end{equation}
In particular, since $\widetilde{R}_V^{^{\ge n}}(q)\ge
\widetilde{R}_V^{=r}(0)>0$\,, $\widetilde{R}_V^{=m}(q)$ does not
vanish for $q\in \Sigma$ with $|q|\ge 1.$ 

On the other hand, notice
\begin{align}
\forall q\in \Sigma\,,\, |q|\geq 1\,,\quad
\widetilde{R}_V^{=m}(q)\le c|q|^{\tau r}\,.
\label{B.11M}\end{align}
The inequalities  (\ref{B.6}) and (\ref{B.11M}) lead to 
\begin{align*}
\widetilde{R}_V^{^{\ge n}}(q)^{\alpha}\le C|q|^{2\tau r-|\gamma|}
\end{align*}
 for every $q\in \Sigma$ with $|q|\ge \rho\ge1.$ Therefore
 since $\widetilde{R}_V^{^{\ge n}}(q)\geq \widetilde{R}_{V}^{=r}(0)>0$
 we deduce $|\gamma|\leq 2\tau r.$
 \\
Using again (\ref{B.11M}) we get
\begin{align}
\frac{1}{|q|^{\frac{|\gamma|}{2}}}\le \frac{c^{\frac{|\gamma|}{2\tau r}}}
{\widetilde{R}_V^{=m}(q)^{\frac{|\gamma|}{2 \tau r}}}~,\label{B.7}\end{align}
for any $q\in \Sigma$  with $|q|\geq 1.$ 

From (\ref{B.6.2}) and (\ref{B.7}),  we deduce
\begin{align}
\forall q\in \Sigma\,, |q|\geq \varrho\geq 1\,,\quad
\frac{\varrho^{|\gamma|/2}}{2A}
\widetilde{R}_V^{^{\ge n}}(q)^{\alpha}\leq
\frac{|q|^{|\gamma|/2}}{2A}
\widetilde{R}_V^{^{\ge n}}(q)^{\alpha}\leq 
c^{\frac{|\gamma|}{2\tau r}}\,\widetilde{R}_V^{=m}(q)^{2(1-\frac{|\gamma|}{4\tau r})}\;.
\end{align}
We now take $\delta=\frac{|\gamma|}{4\tau r}\in (0,1)$ and
$$
\Lambda_{\Sigma}(\varrho)=\left\{
  \begin{array}[c]{ll}
    \frac{\varrho^{|\gamma|/2}}{2Ac^{\frac{|\gamma|}{2\tau
    r}}}&\text{if}\quad \varrho\geq 1\\
0&\text{else}\,.
  \end{array}
\right.
$$
\end{proof}

\end{appendices}
\textbf{Acknowledgement} I express my sincere gratitude to Professor Francis Nier. As a PhD advisor, Professor Nier supported me in this work.


\begin{thebibliography}{99}
\bibitem[AlVi]{AlVi}
 A.~Aleman, J.~Viola: On weak and strong solution operators for
 evolution equations coming from quadratic operators.
\newblock  J. Spectr. Theory 8, no. 1, 33–-121, (2018).
\bibitem[BNV]{BNV} M. Ben Said, F. Nier, J. Viola : Quaternionic structure and analysis of some Kramers-Fokker-Planck.
\newblock Arxiv1807.01881, (2018).
\bibitem[BoLe]{BoLe} J.-M. Bony; N. Lerner: Quantification asymptotique et microlocalisations d'ordre supérieur.  Ann.  Sc. ENS, 22, 377-433, (1989).

\bibitem[HeNi]{HeNi}
B.~Helffer, F.~Nier: 
\emph{Hypoelliptic estimates and spectral theory for Fokker-Planck
  operators and Witten Laplacians.}
\newblock 
Lecture Notes in Mathematics, 1862. Springer-Verlag. x+209 pp,  (2005)

\bibitem[HeNo]{HeNo}
B.~Helffer, J.~Nourrigat: \emph{Hypoellipticit{\'e} maximale pour des
  op{\'e}rateurs polyn{\^o}mes de champs de vecteurs.}
\newblock  Progress in Mathematics, 58,  (1985).
\bibitem[HerNi]{HerNi}
 F.~H{\'e}rau, F. Nier:
 Isotropic hypoellipticity and trend to equilibrium for the
 Fokker-Planck equation with a high-degree potential.
\newblock
Arch. Ration. Mech. Anal. 171, no. 2, 151-–218, (2004).\label{2}

\bibitem[HiPr]{HiPr}
M.~Hitrik, K.~Pravda-Starov: Spectra and semigroup smoothing for non-elliptic
quadratic operators.
\newblock  Math. Ann. 344, no. 4, 801–846, (2009).
\bibitem[Hor]{Hor}
L.~H\"ormander: Symplectic  classification  of  quadratic  forms,  and
general  Mehler  formulas.
\newblock Math. Z., 219:413-–449, (1995).
\bibitem[Hor1]{Hor1}L.~H\"ormander: The analysis of linear partial differential operators. I. Distribution theory and Fourier analysis. Reprint of the second (1990) edition [Springer, Berlin; MR1065993]. Classics in Mathematics. Springer-Verlag, Berlin. x+440 pp,  (2003). 
\bibitem[Hor2]{Hor2}L.~H\"ormander: The analysis of linear partial differential operators. II. 
Differential operators with constant coefficients. Reprint of the 1983 original. Classics in Mathematics. Springer-Verlag, Berlin. viii+392 pp, (2005).

\bibitem[Li]{Li}
W.-X.~Li: Global hypoellipticity and compactness of resolvent for
Fokker-Planck operator.
\newblock Ann. Sc. Norm. Super. Pisa Cl. Sci. (5) 11 , no. 4, 789--815, (2012). 

\bibitem[Li2]{Li2}
W.-X.~Li: Compactness criteria for the resolvent of Fokker-Planck
operator.prepublication.
\newblock ArXiv1510.01567, (2015).
\bibitem[Nie]{Nie} F.~Nier: Hypoellipticity for Fokker-Planck operators and Witten Laplacians, "Lectures on the analysis of nonlinear partial differential equations", Morningside Lect. Math., 1, Int. Press, Somerville, MA, Part 1, 31–84, (2012).

\bibitem[Nou]{Nou} J.~Nourrigat: Subelliptic estimates for systems of pseudo-differential operators. Course in Recife. University of Recife, (1982).

\bibitem[Sim]{Sim} B.~Simon:  Convexity: An Analytic Viewpoint, Cambridge Tracts in Mathematics 187, Cambridge University Press, Cambridge, (2011)

\bibitem[Vil]{Vil}
C\'edric Villani: Hypocoercivity. Memoirs of the American Mathematical Society. 202 no. 950, iv+141 pp, (2009).
\bibitem[Vio]{Vio}
J.~Viola:  Spectral projections and resolvent bounds for partially
elliptic quadratic differential operators.
\newblock J.~Pseudo-Diff.~Oper.~Appl.~4, no.~2, 145--221, (2013).

\bibitem[Vio1]{Vio1}
J.~Viola: The elliptic evolution of non-self-adjoint degree-2 Hamiltonians.
\newblock ArXiv 1701.00801, (2017).






\end{thebibliography}
\end{document}